%Last Update: March 24, 2010

\documentclass[11pt,pdflatex]{amsart}
\usepackage[usenames]{color}
\usepackage{color}
\usepackage{amssymb}
\usepackage{graphicx, epsfig}
\usepackage{latexsym, amsfonts, amscd, amsmath}
\usepackage{mathrsfs}
\usepackage{lscape}
\makeindex \setcounter{tocdepth}{2}
\input xy
\xyoption{all}

 \voffset = -20pt \hoffset = -60pt \textwidth =
460pt \textheight =610pt \headheight = 12pt \headsep = 20pt

%Color definitions
\definecolor{orange}{rgb}{1,0.5,0}
\definecolor{Indigo}{rgb}{0.2,0.1,0.7}
\definecolor{Violet}{rgb}{0.5,0.1,0.7}

% The theorems etc. numbering

%\newtheorem{thm}{Theorem}
\newtheorem{thm}{Theorem}[subsection]
\newtheorem{prop}[thm]{Proposition}
\newtheorem{lem}[thm]{Lemma}
\newtheorem{cor}[thm]{Corollary}

\theoremstyle{definition}

\newtheorem{exa}[thm]{Example}

\theoremstyle{remark}
\newtheorem{rmk}[thm]{Remark}

\numberwithin{equation}{subsection}
\numberwithin{figure}{subsection} \numberwithin{table}{subsection}

%Notation which should appear as operators
\newcommand{\Aut}{{\operatorname{Aut}}}

\newcommand{\diag}{{\operatorname{diag}}}

\newcommand{\End}{{\operatorname{End}}}
\newcommand{\Fr}{{\operatorname{Fr }}}
\newcommand{\Hom}{{\operatorname{Hom}}}

\newcommand{\Ind}{{\operatorname{Ind }}}
\newcommand{\Jac}{{\operatorname{Jac }}}
\newcommand{\Ker}{{\operatorname{Ker}}}

\newcommand{\Norm}{{\operatorname{Norm }}}

\newcommand{\Span}{{\operatorname{Span }}}
\newcommand{\Spec}{{\operatorname{Spec }}}

\newcommand{\Tr}{{\operatorname{Tr }}}
\newcommand{\val}{{\operatorname{val}}}

%Notation which should appear as operators(GROUPS)
\newcommand{\SL}{{\operatorname{SL }}}
\newcommand{\GL}{{\operatorname{GL}}}
\newcommand{\Gal}{{\operatorname{Gal}}}
\newcommand{\Symp}{{\operatorname{Sp }}}

\newcommand{\PGL}{{\operatorname{PGL}}}

%shortcuts for groups

%shortcuts for fields, rings etc.

%shortcuts for letters German
\newcommand{\gera}{{\frak{a}}}
\newcommand{\gerb}{{\frak{b}}}

\newcommand{\gerh}{{\frak{h}}}
\newcommand{\geri}{{\frak{i}}}
\newcommand{\gerj}{{\frak{j}}}

\newcommand{\germ}{{\frak{m}}}

\newcommand{\gerp}{{\frak{p}}}
\newcommand{\gerq}{{\frak{q}}}

\newcommand{\gerH}{{\frak{H}}}
\newcommand{\gerI}{{\frak{I}}}

\newcommand{\gerP}{{\frak{P}}}

\newcommand{\gerR}{{\frak{R}}}

%Shortcut for underlined letters

\newcommand{\uomega}{{\underline{\omega}}}

%Shortcut for letters mathcal
\newcommand{\calA}{{\mathcal{A}}}

\newcommand{\calO}{{\mathcal{O}}}

% Shortcut for letters mathbb
\def\AA{\mathbb{A}}
\def\BB{\mathbb{B}}
\def\CC{\mathbb{C}}
\def\DD{\mathbb{D}}
\def\EE{\mathbb{E}}
\def\FF{\mathbb{F}}
\def\GG{\mathbb{G}}
\def\HH{\mathbb{H}}

\def\PP{\mathbb{P}}
\def\QQ{\mathbb{Q}}
\def\RR{\mathbb{R}}

\def\WW{\mathbb{W}}

\def\ZZ{\mathbb{Z}}

%Shortcut for letters mathscr

\newcommand{\scrA}{{\mathscr{A}}}
\newcommand{\scrB}{{\mathscr{B}}}

\newcommand{\scrH}{{\mathscr{H}}}

\newcommand{\scrM}{{\mathscr{M}}}

%shortcuts for categories

%shortcuts for cumbersome commands
\newcommand{\id}{{\noindent}}
\newcommand{\normal}{{\vartriangleleft}}

%Arrows

\newcommand{\ilim}{{\underset{\longleftarrow}{\lim} \; \; }}
\newcommand{\arr}{{\; \rightarrow \;}}
\newcommand{\Arr}{{\; \longrightarrow \;}}
\newcommand{\surjects}{{\; \twoheadrightarrow \;}}
\newcommand{\injects}{{\; \hookrightarrow \;}}

\newcommand{\ok}{{\mathcal{O}_K}}

\newcommand{\fpbar}{\overline{\mathbb{F}}_p}
\newcommand{\Spf}{\operatorname{Spf }}

%Specific to Goren-Kassaei

\newcommand{\Qpbar}{\overline{\QQ}_p}

\newcommand{\App}{{\rm App}}

\begin{document}
\marginparwidth 50pt

\title{Genus 2 Curves with Complex Multiplication}

\author{Eyal Z. Goren \& Kristin E. Lauter}
\address{Department of Mathematics and Statistics,
McGill University, 805 Sherbrooke St. W., Montreal H3A 2K6, QC,
Canada.}\address{Microsoft Research, One Microsoft Way, Redmond,
WA 98052, USA.} \email{goren@math.mcgill.ca;
klauter@microsoft.com} \subjclass{Primary 11G15, 11G16 Secondary
11G18, 11R27}

%\begin{abstract}
%
%\end{abstract}

\maketitle

%%%%%%%%%%%%%%%%%%%%%%%%%%%%%%%%%%%%%%%%

\section{Introduction}

\id While the main goal of this paper is to give a bound on the denominators of
Igusa class polynomials of genus 2 curves, our motivation is two-fold: on the one hand we are interested in applications to cryptography via the use of genus $2$ curves with a prescribed number of points, and on the other hand, we are interested in construction of class invariants with a view towards explicit class field theory and Stark's conjectures. In the following we give an overview of these motivating problems and explain the contents of the paper.

\

\id Some basic protocols in public key cryptography such as key exchange and digital signatures rely on
the assumption that the discrete logarithm problem in an underlying group is hard.  Current available
alternatives favor the use of the group of points on an elliptic curve or the Jacobian of a hyperelliptic genus 2 curve over a finite field as the underlying group.
The security of the system depends on the
largest prime factor of the group order, so it is crucial to
be able to construct curves such that the resulting group order is
prime. Also, for applications in pairing-based cryptography, it may be necessary to impose
additional divisibility conditions on the group order.  Parameterized families of curves satisfying these
type of conditions are called pairing friendly curves.  Thus algorithms to construct curves with
prescribed group orders are required.  Currently, typical minimum security requirements require a group size of at least $2^{256}$ when the best-known attacks are square-root algorithms, giving roughly $128$ bits of security.  Compared to elliptic curves, Jacobians of genus 2 curves are an attractive alternative because they offer comparable security levels over a field of half the bit size, since the group size of the Jacobian of a genus 2 curve over a finite field $\FF_p$ is roughly $p^2$, whereas elliptic curves have group size roughly~$p$.

In the case of elliptic curves, the polynomial-time point-counting algorithm proposed by Schoof and improved by Elkies and Atkin (or the newer Arithmetic-Geometric mean algorithm) allows the following approach: one can pick elliptic curves over a finite field of cryptographic size and count points until a prime group order is found.  This solution will not work for generating pairing-friendly curves however.
Also, over prime fields of cryptographic size, it will not work for hyperelliptic curves of genus greater than one, either. Starting with the work of Atkin and Morain on generating elliptic curves with a prescribed group order for primality proving, the standard approach to constructing such curves has been to use the theory of Complex Multiplication in the so-called CM method.

Given a prime number $p$, and a non-negative group order $N$ lying in
the Hasse-Weil interval $[p+1-2\sqrt{p},p+1+2\sqrt{p}]$, the goal is
to produce an elliptic curve $E$ over $\FF_p$ with $N$ $\FF_p$-points:
$\#E(\FF_p) = N =p+1-t$,
where $t$ is the trace of the Frobenius endomorphism of $E$ over $\FF_p$.
Set $D=t^2-4p$.  The Frobenius endomorphism of $E$ has characteristic polynomial
$x^2-tx+p$, so it follows from the quadratic formula that the roots of this polynomial
lie in $\QQ(\sqrt{D})$.  It is standard to associate the Frobenius
endomorphism with a root of this polynomial.  If $E$ is not supersingular,
then $R$, the endomorphism ring of $E$, is an order in the ring
of integers of $K=\QQ(\sqrt{D})$.  Now the problem is transformed into one of
generating elliptic curves  with endomorphism ring equal to an order in $K$.
The correspondance between isomorphism classes of elliptic curves over $\overline{\QQ}$
with endomorphism ring equal to~$\ok$ and primitive, reduced, positive
definite binary quadratic forms of discriminant~$D$ gives an easy way to
run through all such elliptic curves.

Define the Hilbert class polynomial~$H_D(X)$ associated to the field $K$ as follows:
\[H_D(X) = \prod \left(X - j \left(\frac{-b+\sqrt{D}}{2a} \right) \right),\]
where the product ranges over the set of $(a,b) \in \ZZ^2$ such that
$ax^2 + bxy + cy^2$ is a primitive, reduced, positive definite binary
quadratic form of discriminant~$D$ for some $c \in \ZZ$,
and $j$ denotes the modular $j$-function.
The degree of $H_D(X)$ is equal to $h_K$, the class number of $K$, and it is known that $H_D(X)$ has integer coefficients.
To find an elliptic curve modulo~$p$ with $N$ points over $\FF_p$, it suffices to find a root $j$
of ~$H_D(X)$ modulo~$p$. One can then reconstruct the elliptic curve from its $j$-invariant $j$.
Assuming $j \neq 0, 1728$ and $p \ne 2,3$, the required elliptic curve is given by the Weierstrass equation
$y^2=x^3+3kx+2k$, where $k=\frac{j}{1728-j}$. The number of points on the elliptic curve is either
$p+1-t$ or~$p+1+t$, and one can easily check which one it is by randomly picking points and checking whether they are killed by the group order.

There are at least three approaches to computing the Hilbert class polynomial.
The complex analytic approach computes $H_D(X)$ as an integral polynomial
by listing all the relevant binary quadratic forms, evaluating the
$j$-function as a floating point integer with sufficient precision,
and then taking the product and rounding the coefficients to nearest
integers.   The Chinese remainder theorem (CRT) approach
computes $H_D(X) \bmod \ell$ for sufficiently many small primes $\ell$ and then uses
the Chinese remainder theorem (CRT) to compute $H_D(X)$ as a polynomial with
integer coefficients.  The $p$-adic approach uses p-adic lifting to approximate the roots and recognize the polynomial.
These algorithms are all satisfactory in practice for small $D$, and the current world record for the largest
$D$ for which $H_D(X)$ has been computed is held by the Explicit CRT method for some $|D|>10^{13}$
\cite{Sutherland}.

The situation for generating genus $2$ curves is more difficult.
The moduli space of genus 2 curves is $3$-dimensional and so at least $3$ invariants are needed to specify a curve up to isomorphism, and, in fact, Igusa's results show that most genus curves are determined by $3$ invariants.
The CM algorithm for genus $2$ is analogous to the Atkin-Morain CM
algorithm for elliptic curves just described. But whereas the Atkin-Morain algorithm computes
the Hilbert class polynomial of an imaginary quadratic field $K$ by evaluating the modular $j$-invariants of all elliptic curves with CM by $K$, the genus $2$ algorithm computes {\em Igusa class polynomials} of a quartic CM field $K$ by evaluating
the modular invariants of all the abelian varieties of dimension $2$ with CM by $K$.

For a primitive quartic CM field $K$ we can define Igusa class polynomials
\[h_i(X)=\prod_\tau(X-j_i(\tau)), \qquad i=1,2,3,\] in analogy with the
Hilbert class polynomial for a quadratic imaginary field; they depend on the quartic CM field $K$, but we suppress it in the notation.  The roots are CM values of
Siegel modular functions, and it is known that these roots generate abelian extensions of
the reflex field of $K$.  Again in analogy with the elliptic curve case,
CM values of modular functions on the Siegel upper half space can
be directly related to the invariants of a binary sextic defining the genus 2 curve
associated to the CM point.
Note that the $j$-invariant of an elliptic curve can be calculated
in two ways, either as the value of a modular function on a lattice
defining the elliptic curve as a complex torus over $\CC$, or directly
from the coefficients of the equation defining the elliptic curve.  Similarly for genus $2$ curves, the
triple of Igusa invariants of a genus $2$ curve
can also be calculated in two different ways.  Using classical
invariant theory over a field of characteristic zero, Clebsch defined
the triple of invariants of a binary sextic $f$ defining a genus $2$
curve $y^2=f(x)$.  Bolza showed how those invariants could also be
expressed in terms of theta functions on the period matrix associated
to the Jacobian variety and its canonical polarization over $\CC$.
Igusa showed how these invariants could be extended to work in
arbitrary characteristic, and so the invariants are often referred
to as Igusa or Clebsch-Bolza-Igusa invariants.  These invariants will be discussed in more detail in \S~\ref{section: moduli genus 2} below.  To recover the equation of a genus $2$ curve given its invariants,
Mestre gave an algorithm which works in most cases, and involves
possibly passing to an extension of the field of definition of the
invariants (\cite{Mestre}).

The CM algorithm for genus $2$ curves takes as input a quartic CM field $K$ and outputs the Igusa class
polynomials with coefficients in $\QQ$ and if desired, a suitable prime
$p$ and a genus~$2$ curve over~$\FF_p$ whose Jacobian has CM by $K$.
The CM algorithm was first implemented by Spallek~\cite{Spallek}, van
Wamelen~\cite{vanW}, and Weng~\cite{Weng}.  Alternative algorithms for 
computing Igusa class polynomials have also been proposed and studied, such as the genus 2 Explicit CRT algorithm~\cite{EL} and a p-adic approach~\cite{GHKRW}.

To compute the Igusa polynomials, Spallek~\cite{Spallek} determined a collection of period matrices which form a set of representatives for isomorphism classes of polarized abelian surfaces with CM by a given field.  Determining this set was complicated, and a complete set of representatives in general was not determined until the recent work of Streng~\cite{Streng}.  In~\cite{Weng}, Weng gave an algorithm for computing the minimal polynomials of Igusa invariants by evaluating Siegel modular forms to very high precision in order to recognize the coefficients of the minimal polynomials as rational numbers.  Unfortunately, the polynomials $h_i(X)$ have rational coefficients, not integral coefficients, which makes them harder to recognize from floating point approximations.  The large number of floating point multiplications performed in the computation causes loss of precision and makes the algorithm hard to analyze. The running time of the CM algorithm had until recently not yet been analyzed due to the
fact that no bound on the denominators of the coefficients of the
Igusa class polynomials was known.

Since the polynomials $h_i(X)$ have rational coefficients,
we can ask about the prime factorization of the coefficients.
In particular, the primes appearing in the denominators are of special interest.
In~\cite{Lauter}, it was conjectured that primes in the denominator
are bounded by the discriminant of the CM field and satisfy some additional arithmetic conditions.
In fact, the primes in the denominator are primes of bad reduction for the associated curve.
It was shown in~\cite{GL} that bad reduction of a CM curve at a prime is equivalent to existence of a solution to a certain embedding problem: embedding the ring of integers of the primitive quartic CM field
into the endomorphism ring of a product of supersingular elliptic curves in a way which is
compatible with the Rosati involution induced by the product polarization.
In~\cite{GL}, we provided bounds on the primes which can appear
in the prime factorization of the denominators.  In the present paper, we extend that work to provide bounds on the powers to which those primes appear, thereby proving an absolute upper bound on the size of the denominators.  Our bounds have already been used in ~\cite{Streng} to provide a complete running time analysis of the complex analytic CM method for genus $2$. The additional arithmetic conditions turn out to be equivalent to superspecial reduction of the abelian surfaces in question and so are essentially covered by \cite{GorenReduction}, and in more generality in \S \ref{section: reduction of abelian surfaces with complex multiplication} of this paper.
In related work~\cite{BY}, the factorization of the denominators, when averaged over the corresponding CM cycle, was studied and a precise conjecture 
was formulated.  In subsequent work of Yang, the conjecture was proved for certain classes of quartic CM fields, thereby giving tight bounds on the size of the denominators in those cases.  But a general bound
needed for the complexity analysis has not been known until the work of the present paper.

\

\id The investigations carried out in this paper also have a completely different motivation, which comes from class field theory and Stark's conjecture. Consider the modular form that we call $\Theta$ in this paper (\S \ref{section: Igusa class polynomials}); it is the unique Siegel cusp form of weight $10$ and full level, up to a scalar, and is equal, up to a scalar, to the product of the squares of the $10$ even Riemann theta functions of integer characteristics. In many ways $\Theta$ is the analogue the elliptic cusp form~$\Delta$ of weight $12$. Because of this analogy, Goren and Deshalit have studied in \cite{DeShalitGoren} certain algebraic numbers constructed from values of $\Theta$ at CM points associated to a primitive quartic CM field~$K$, whose definition parallels the definition of the Siegel units. Certain expressions in such values gave quantities $u(\gera, \gerb)$ associated to certain ideals in $K$, that depend also on the choice of CM type. These quantities lie in the Hilbert class field of the reflex field of $K$ and have many appealing properties, such as a nice transformation law under Galois automorphism, and their dependence only on the ideal classes of $\gera$ and $\gerb$. Thus, one is justified in calling them class invariants.

A natural question that arose is whether these invariants are actually units, or close to being units, in the sense that one knows their exact prime factorization, and these primes are small relative to, say, the discriminant of the field $K$. While we do not have complete solutions, several results concerning these have been obtained by the authors in recent years \cite{GL, GL2}. See also \cite{Vallieres} for numerical data. One of the main reasons to study such quantities is Stark's conjecture.

Recall that for a number field $K$ and $\germ$, a modulus in $K$ divisible by all the infinite primes, Stark's conjecture asserts that if
$\zeta(K; \mathcal{A}, 0) = 0$ then there exists a unit $u(\calA)$ of $K[\germ]$ (the associated ring class field) such that
\[ \zeta^\prime(K, \mathcal{A}, 0) = \log \vert u(A)\vert, \]
where $\zeta(K; \mathcal{A}, s)$ is the partial zeta function associated to an ideal class $\calA$ modulo $\germ$. In spite of much work in this area, it is fair to say that Stark's conjecture is essentially completely open. It is believed that the main obstacle is finding a ``good" construction of units, and that was precisely the motivation of \cite{DeShalitGoren}, although the problem of relating the class invariants $u(\gera, \gerb)$ to $L$-functions is still outstanding.

Now, as it turns out, the denominators occurring in the coefficients of the Igusa class polynomials $h_i$ have to do with the modular form $\Theta$ as well, and essentially both questions - the nature of the denominators and the factorization of the invariants $u(\gera, \gerb)$ - have the same underlying geometric question, which is whether an abelian surface with CM by $K$, over some artinian local ring, can be isomorphic to a product of elliptic curves (with additional conditions on polarizations). The main theorem of the paper is the following.

\

\id {\bf Theorem~\ref{theorem: main theorem}.} \emph{Let $f = g/\Delta^k$ be a modular function of level one on $\gerH_2$ where:
\begin{enumerate}
\item $\Delta$ is Igusa's $\chi_{10}$, the product of the squares of the ten Riemann theta functions with even integral chracteristics, normalized to have Fourier coefficients that are integers and of g.c.d. $1$.
\item $g$ is a level one modular form of weight $10 k$ with integral Fourier coefficients whose g.c.d. is $1$.
%\item The divisor of $g$ and of $\Delta$ have no common component.
\end{enumerate}
Then $f(\tau) \in L= NH_{K^\ast}$ and
\begin{equation}
\val_{\gerp_L}( f(\tau)) \geq \begin{cases}
-4ke\left(\log_p\left(\frac{d \Tr(r)^2}{2}\right)+1\right) & e \leq p-1 \\
-4ke\left(8\log_p\left(\frac{d \Tr(r)^2}{2}\right)+2\right) & \text{any other case.}
\end{cases}
\end{equation}
Furthermore, unless we are in the situation of superspecial reduction, namely, we have a check mark in the last column of the tables in \S~\ref{section: reduction of abelian surfaces with complex multiplication}, $\val_{\gerp_L}( f(\tau)) \geq  0$. The valuation is normalized so that a uniformizer at $\gerp_L$ has valuation $1$.}

\

\id Corollaries  \ref{cor: denominators}, \ref{cor: class invariants}, of this theorem give the applications to denominators of Igusa class polynomials and class invariants described above.

In order to prove this theorem, we develop several tools that are of independent interest. The first one is an explicit determination of the reduction of abelian surfaces with complex multiplication. The main invariants of an abelian surface $A$ over a field of characteristic $p>0$ are its $f$-number, that determines the size of the \'etale quotient of $A[p]$, and the $a$-number that determines the size of the local-local part of that group scheme.  These numbers determine, for example, in which Ekedahl-Oort strata the moduli point corresponding to $A$ lies. It turns out, and that was essentially known by \cite{GorenReduction} and \cite{Yu}, that these numbers can be read from the prime factorization of $p$ in the normal closure $N$ of $K$ over $\QQ$ and the CM type. However, to our knowledge, a complete analysis had not appeared in the literature, and we make this analysis explicit here, dealing also with ramified primes, in a self-contained manner.

For a prime $p$ to appear in the denominators of the $h_i$, or for $\gerp\vert p$ to appear in the factorization of a $u(\gera, \gerb)$, some abelian surface with CM by $K$ must be isomorphic over $\fpbar$ to the product of two supersingular elliptic curves $E \times E'$. This gives $f = 0, a = 2$, and so sieves out the ``evil primes" according to their factorization in $N$. A further, and most important condition, is imposed by the fact that the Rosati involution of $E \times E'$ must induce complex conjugation on $K$. We are able to translate the fact that a prime appears to a certain power in the denominators of the $h_i$ (similarly for the $u(\gera, \gerb)$) to the fact that a similar situation must hold over a certain artinian ring $(R, \germ)$ and the index of nilpotency of $\germ$ is proportional to the power of the prime. This requires some results in intersection theory (\S \ref{section: intersection theory}) and the introduction of an auxiliary moduli space (\S \ref{section: The moduli space of pairs of elliptic curves}).

A certain maneuver, already used in \cite{GL}, allows us to reduce the problem to a question about endomorphisms of elliptic curves over $R$ whose reduction modulo $\germ$ is supersingular. Some special instance of this problem was studied by Gross in \cite{Gross}, but his results do not suffice for our purposes. We approach this problem using crystalline deformation theory in \S \ref{section: deformation theory}; in the course of developing the results we need, we provide some more general results that are natural in that context and will, so we believe, be useful for others. Since crystalline deformation theory is only valid under certain restrictions on ramification, we provide an alternative approach that works without any restriction (\S \ref{subsec: high ramification}) and gives results that are not too much worse than crystalline deformation theory gives.

%\id {\bf Acknowledgements.}  *** coming ***

%%%%%%%%%%%%%%%%%%%%%%%%%%%%%%%%

\

\

\section{Moduli of curves of genus $2$}\label{section: moduli genus 2}
\subsection{Curves of genus two - Igusa's results}
Let $y_1, y_2, y_3$ be independent variables and let $y_4 = \frac{1}{4} (y_1y_3 - y_2^2)$. The group of fifth roots of unity $\mu_5$ acts on the ring $\ZZ[\zeta_5][y_1, y_2, y_3, y_4]$ by $[\zeta](y_i) : = \zeta^i y_i$ (and trivially on the coefficients). The ring of invariants is defined over $\ZZ$ and we denote it, \emph{by abuse of notation}, \[\ZZ[y_1, y_2, y_3, y_4]^{\mu_5}.\] One of Igusa's main results \cite[p. 613]{IgusaArithmeticModuli} is that $\scrM_2$,\label{M2} the coarse moduli space of curves of genus~$2$, satisfies
\begin{equation} \scrM_2 \cong \Spec(\ZZ[y_1, y_2, y_3, y_4]^{\mu_5}).\end{equation}
This ring of invariant elements is generated over $\ZZ$ by 10 elements. We remark that outside the prime $2$, namely if we work over $\ZZ[1/2]$, we can dispense with $y_4$ and conclude that
\[ \scrM_2 \otimes \ZZ[1/2] \cong \Spec(\ZZ[1/2][y_1, y_2, y_3]^{\mu_5}).\]
(Same abuse of notation.)
Note that to find generators over $\ZZ[1/2]$ for $\ZZ[1/2][y_1, y_2, y_3]^{\mu_5}$ amounts to finding vectors $(a, b, c) \in \ZZ_{\geq 0}^3$ such that $a + 2b + 3c \equiv 0 \pmod{5}$ that generate the semigroup $\{(a, b, c) \in \ZZ_{\geq 0}^3: a + 2b + 3c \equiv 0 \pmod{5}\}$ -- one associate to the vector $(a, b, c)$ the monomial $y_1^ay_2^by_3^c$. Such generators are given by the following $8$ triples:
\begin{equation}\label{equation: triplets generators}\{ (0, 0, 5), (0, 5, 0), (5, 0, 0), (0, 1, 1), (1, 2, 0), (3, 1, 0), (1, 0, 3), (2, 0, 1)\}.
\end{equation}
On the other hand, given a field $k$ of odd characteristic, to find generators for the fraction field ${\rm Frac}(k[y_1, y_2, y_3]^{\mu_5})$, one needs generators for the group $\{(a, b, c) \in \ZZ^3: a + 2b + 3c \equiv 0 \pmod{5}\}$, which one can choose to be the vectors $(2, -1, 0), (3, 0, -1), (5, 0, 0)$ (corresponding to the monomials $y_1^2/y_2, y_1^3/y_3, y_1^5$), for example. 

Igusa's construction is based on much earlier work by Clebsch and others on invariants of sextics. A genus $2$ curve is hyperelliptic, where a hyperelliptic curve is defined to be a curve which is a double cover of the projective plane. In characteristic different from $2$ the situation is very much like over the complex numbers, and one can conclude that such a curve can be written as $y^2 = f(x)$, where $f(x)$ is a separable monic polynomial of degree $6$, uniquely determined up to projective substitutions, thus reducing the problems of classifying genus $2$ curves to studying when two sextics are equivalent under a projective transformation, or, equivalently, studying the space parameterizing unordered 6-tuples of points in $\PP^1$. 

\subsection{Igusa's coordinates}To describe the invariants of sextics we use Igusa's notation. Let
\[ y^2 = f(x) = u_0x^6 + u_1x^5 + \cdots + u_6,\]
be a hyperelliptic curve and let $x_1, \dots, x_6$ be the roots of the polynomial $f(x)$. The noation $(ij)$ is a shorthand for the expression $(x_i - x_j)$. Consider then
\begin{eqnarray}\label{ABCD}
A(u) & = & u_0^2 \sum_{\rm{fifteen}} (12)^2(34)^2(56)^2 \\
B(u) & = &u_0^4 \sum_{\rm{ten}} (12)^2(23)^2(31)^2(45)^2(56)^2(64)^2 \\
C(u) & = &u_0^6 \sum_{\rm{sixty}} (12)^2(23)^2(31)^2(45)^2(56)^2(64)^2(14)^2(25)^2(36)^2 \\
D(u) & = & u_0^{10} \prod_{i<j} (ij)^2
\end{eqnarray}
The subscript ``fifteen" in $A$ refers to the fact that there are $15$ ways to partition~$6$ objects into~$3$ groups of $2$ elements, the subscript ``ten" in $B$ refers to the fact that there are $10$ ways to partition~$6$ objects into $2$ groups of $3$ elements. The subscript ``sixty" refers to partitioning~$6$ objects into two groups and then finding a matching between these two groups: there are $10$ ways to partition into $2$ groups and six matching between the two chosen groups. The invariants $A, B, C, D$ are denoted $A', B', C', D'$ in \cite[p. 319]{Mestre}, but we follow Igusa's notation; these invariants are often called now the \emph{Igusa-Clebsch invariants}. Another common notation one finds in the literature is $I_2 = A, I_4 = B, I_6 = C, I_{10} = D$, for example in the Magma help pages on February 2010,  but we shall avoid using it, especially since it conflicts with Igusa notation as in \cite[p. 848]{IgusaProj}.

The invariants $A, B, C, D$ are homogenous polynomials of weights $2, 4, 6$ and $10$, respectively, in $u_0, \dots, u_6$, thought of as variables. In addition they are invariants of index $6, 12, 18$ and $30$, respectively. This means the following: Let $f(x, z)$ be the homogenized form of $f$, that is, 
\[ f(x, z) = u_0x^6 + u_1x^5z + \cdots + u_6z^6.\]
Let $M=\left(\begin{smallmatrix} \alpha & \beta \\ \gamma & \delta \end{smallmatrix} \right)\in \GL_2$ and let
\[ 
x = \alpha x^\prime + \beta z^\prime, \qquad z = \gamma x^\prime + \delta z^\prime.\] 
Write, by substituting these expressions for $x, z,$ and expanding,
\[ f(x, z) =  u_0^\prime {x^\prime}^6 + u_1^\prime {x^\prime}^5z^\prime + \cdots + u_6^\prime {z^\prime}^6.\] 
Then, a polynomial $J = J(u_0, \dots, u_6)$ in the variables $u_0, \dots, u_6$ is called an \emph{invariant of index~$k$} if
\[ J(u_0^\prime, \dots, u_6^\prime)= \det(M)^{k} J(u_0, \dots, u_6).\]
The terminology here is classical and follows, e.g., \cite{Mestre}. (An \emph{invariant}, in the terminology of loc. cit, is a covariant of order $0$, which means it is an expression in the coefficients of $f$ alone, as is the case here.) An invariant of degree $r$ of a sextic has index $3r$; cf. loc. cit. p. 314. 

Note that if we let  $f^\prime$ be the polynomial $f^\prime(t) = u_0^\prime t^6 + \cdots + u_6^\prime$ then the two hyperelliptic curves \[C: y^2 = f(x), \qquad C^\prime: y^{\prime,2} = f^\prime(x),\] are isomorphic. Indeed, the map \[(x^\prime, y^\prime) \mapsto (x, y):= \left(\frac{ax^\prime + b}{cx^\prime+d}, \frac{y^\prime}{(cx^\prime + d)^3}\right)\] gives an isomorphism $C^\prime \arr C$ as $(\frac{y^\prime}{(cx^\prime + d)^3})^2 = f(\frac{ax^\prime + b}{cx^\prime+d})$.

\

\id In characteristic $0$, every sextic gives a vector $(A, B, C, D)$ with $D \neq 0$ and, vice-versa, every such vector comes from a sextic. Two curves over an algebraically closed field are isomorphic if and only if one curve has invariants  $(A, B, C, D)$ and the invariants of the other curve are $(r^2A: r^4B: r^6 C: r^{10}D)$ for some $r\neq 0$ in the field \cite[Corollary, p. 632]{IgusaArithmeticModuli} (it would have been more natural to write the powers of $r$ in multiple of $6$, but we follow convention here). Thus, it is natural to associate to a sextic a vector $(A:B:C:D)$ in the weighted projective space $\PP^3_{2, 4, 6, 10}$. Similar to the case of the usual projective space $\PP^3_{1, 1, 1, 1}$, the complement of the hypersurface defined by $D = 0$ is affine. But, where for a usual projective space with coordinates $(x_0, x_1, x_2, x_3)$ the affine variety is $\Spec(\QQ[x_0/x_3, x_1/x_3, x_2/x_3])$, for a weighted projective space we need more functions; at the case at hand 
one needs $10$ functions, and these will be given below in terms of the $J_{2i}$;
every regular function on the affine variety $\PP^3_{2, 4, 6, 10} \setminus \{ D = 0\}$ is a polynomial in these functions.

Define

\begin{center}
{\renewcommand{\arraystretch}{2}
\begin{tabular}{p{4.5 cm}p{4.5 cm}p{5.5 cm}}\label{Js} $J_2 = 2^{-3}A$ & $J_4 = 2^{-5}3^{-1}(4J_2^2 - B)$ &$J_6 = 2^{-6}3^{-2}(8J_2^3 - 160J_2J_4 - C)$ \\ $J_8 = 2^{-2}(J_2 J_6 - J_4^2)$ &$J_{10}= 2^{-12}D$ &\end{tabular}
}
\end{center}

\

\id A calculation shows that these invariants still make sense in characteristic $2$. 

Let $\gerR$ be the ring of homogenous elements of degree zero in the graded ring generated over~$\ZZ$ by $J_2, J_4, \dots, J_{10}$ and localized at $J_{10}$. In fact, any absolute invariant, namely any invariant which is the quotient of two invariants of the same index, belongs to $\gerR$ (\cite[Proposition 3, p. 633]{IgusaArithmeticModuli}). One can show that there is an isomorphism
\[ \gerR \overset{\sim}{\Arr} \ZZ [y_1, y_2, y_3, y_4]^{\mu_5}, \]
uniquely determined by
\[ J_2^{e_1}J_4^{e_2}J_6^{e_3}J_8^{e_4}J_{10}^{-e_5} \mapsto y_1^{e_1}y_2^{e_2}y_3^{e_4}y_4^{e_4},\] where the $e_i$ are non-negative integers satisfying the relation $e_1+2e_2 + 3e_3 + 4e_4 =5e_5$ and as before $y_4 = \frac{1}{4}(y_1y_3-y_2^2)$. Igusa proceeds to show that the ring $\gerR$ is generated by $10$ elements over $\ZZ$, and by $8$ elements over $\ZZ[1/2]$, and that is best possible. 

Over $\ZZ$ the generators of $\gerR$ can be taken to be the following.

\begin{center}
{\renewcommand{\arraystretch}{2}
\begin{tabular}{p{3 cm}p{3 cm}p{3 cm}p{3 cm}p{3 cm}}\label{gammas}  $\gamma_1=J_2^5/J_{10}$ &  $\gamma_2=J_2^3J_4/J_{10}$ &  $\gamma_3= J_2^2J_6/J_{10}$ &$\gamma_4 = J_2J_8/J_{10}$& $\gamma_5=J_4J_6/J_{10}$   \\  $\gamma_6 = J_4J_8^2/J_{10}^2$  &  $\gamma_7= J_6^2J_8/J_{10}^2$ &  $ \gamma_8=J_6^5/J_{10}^3$ &  $\gamma_9= J_6J_8^3/J_{10}^3$ & $\gamma_{10} = J_8^5/J_{10}^4$
\end{tabular}
}
\end{center}

\

\id (Over $\ZZ[1/2]$ a set of generators is
\begin{center}
{\renewcommand{\arraystretch}{2}
\begin{tabular}{p{3 cm}p{3 cm}p{3 cm}p{3 cm}}$g_1=J_2^5/J_{10}$ &  $g_2=J_2^3J_4/J_{10}$ &  $g_3=J_2J_4^2/J_{10}$ &  $g_4= J_2^2J_6/J_{10}$ \\  $g_5=J_4J_6/J_{10}$ &  $g_6= J_2J_6^3/J_{10}^2$ &  $ g_7=J_4^5/J_{10}^2$ &  $g_8= J_6^5/J_{10}^3$
\end{tabular}
}
\end{center}
\id (and the reader will recognize the exponents from (\ref{equation: triplets generators}).) We call them \emph{the Igusa coordinates of $\scrM_2$}. Here are some consequences of these results.
\begin{enumerate}
\item
 Let $C_1, C_2$, be curves over an algebraically closed field $k$, and write $C_i: y^2 = f_i(x)$, where $f_i(x) \in k[x]$ is a sextic. Then,
\[ C_1 \cong C_2 \quad \Longleftrightarrow \quad (\gamma_1(f_1), \dots, \gamma_{10}(f_1)) = (\gamma_1(f_2), \dots, \gamma_{10}(f_2)).\]
\item
Let $C$ now be defined over a number field $L_0$, $C: y^2 = f(x), f(x) \in L_0[x]$, then $C$ has potentially good reduction at a prime $\gerp$ of $L_0$, namely, there exists a finite extension field $L/L_0$ and an ideal $\gerP\vert \gerp$ of $L$ such that $C$ has good reduction modulo $\gerP$, if and only if \[\val_\gerp(\gamma_i(f)) \geq 0, \qquad  i = 1, \dots, 10.\]
\item
Let $C_1, C_2$, be curves over a number field $L$, $C_i: y^2 = f_i(x)$ as above, having good reduction at $\gerp$. Then, 
\begin{multline*}\qquad \qquad  C_1 \pmod{\gerp}\cong_{/\fpbar} C_2 \pmod{\gerp} \quad \Longleftrightarrow\\ (\gamma_1(f_1), \dots, \gamma_{10}(f_1))  \equiv (\gamma_1(f_2), \dots, \gamma_{10}(f_2)) \pmod{\gerp}.\end{multline*}
\end{enumerate} 

\subsection{Efficacy of the absolute Igusa invariants} \label{section: absolute invariants} The so-called \emph{absolute Igusa invariants} are the functions
\[\label{is} i_1= A^5/D, \qquad i_2 = A^3B/D, \qquad i_3 = A^2C/D.\]
The choice of terminology is somewhat unfortunate, as it leads one to think that these invariants determine the isomorphism class of the curve; we'll discuss it further below.

Since $D= 2^{12}J_{10}$, the functions $i_1, i_2, i_3,$ belong to $\gerR \otimes \ZZ[1/2]$. It is a consequence of the results mentioned so far that the functions $\gamma_j$ are rational functions of the functions $i_j$ and vice-versa. This calculation is presented in the following two tables.

\begin{center}
\begin{table}[h]
\caption{The absolute Igusa invariants $i_1, i_2, i_3$ in terms of the generators $\gamma_j$}
{\renewcommand{\arraystretch}{2}
\begin{tabular}{||l | l ||} \hline\hline
$i_1$ & $ 8 \cdot \gamma_1$\\ \hline
$i_2$ & $\frac{1}{2}\cdot(\gamma_1 -24\cdot\gamma_2)$ \\ \hline
$i_3$ & $\frac{1}{8}\cdot(\gamma_1 -20 \cdot \gamma_2 - 72\cdot \gamma_3)$ \\ \hline\hline
\end{tabular}
}
\end{table}
\end{center}

\

\

\begin{table}
\caption{The generators $\gamma_j$ in terms of the absolute Igusa invariants $i_1, i_2, i_3$ 
\newline
(the last column gives the denominator)}
{\renewcommand{\arraystretch}{2}
{\small
\begin{tabular}{|| p{0.6 cm} | p{9.5 cm}||}\hline \hline
$\gamma_1$ &  $2^{-3}\cdot i_1$ \\ \hline
$\gamma_2$ &  $2^{-6}3^{-1}\cdot (i_1 - 16\cdot i_2) $  \\ \hline
$\gamma_3$ &  $\frac{1}{3456}\cdot (i_1 + 80 \cdot i_2 - 384 \cdot i_3)$  \\ \hline
$\gamma_4$ & $2^{-11}3^{-3}\cdot\,{\frac {{{  i_1}}^{2}+416 \cdot\,{  i_1}\,{ i_2}-
1536 \cdot\,{  i_1}\,{  i_3}-768 \cdot\,{{  i_2}}^{2}}{{  i_1}}}
$\\ \hline

$\gamma_5$ & $2^{-10} \cdot 3^{-4}\cdot\,{\frac { \left( {  i_1}-16 \cdot\,{  i_2} \right) 
 \left( {  i_1}+80 \cdot\,{  i_2}-384 \cdot\,{  i_3} \right) }{{  i_1}}}
$  \\ \hline

$\gamma_6$ & $2^{-25}\cdot 3^{-7} \cdot\,{\frac { \left( {  i_1}-16 \cdot\,{ i_2}
 \right)  \left( {{  i_1}}^{2}+416 \cdot\,{  i_1}\,{ i_2}-1536 \cdot\,{  i_1}
\,{  i_3}-768 \cdot\,{{  i_2}}^{2} \right) ^{2}}{{{  i_1}}^{3}}}
$ 

\\ \hline

$ \gamma_7$ &  

$2^{-22} \cdot 3^{-9} \cdot\,{\frac { \left( {  i_1}+80 \cdot\,{ i_2}-384 \cdot\,{
  i_3} \right) ^{2} \left( {{  i_1}}^{2}+416 \cdot\,{  i_1}\,{ i_2}-
1536 \cdot\,{  i_1}\,{  i_3}-768 \cdot\,{{  i_2}}^{2} \right) }{{{ i_1}}^{2}}
}
$

\\ \hline

$\gamma_8$ & 

$2^{-29} \cdot 3^{-15}\cdot\,{\frac { \left( { i_1}+80\cdot\,{  i_2}-
384\cdot\,{  i_3} \right) ^{5}}{{{  i_1}}^{2}}}
$

\\ \hline
$ \gamma_9$ &  

$2^{-37} \cdot 3^{-12} \cdot\,{\frac { \left( { i_1}+80 \cdot\,{  i_2}-
384 \cdot\,{  i_3} \right)  \left( {{  i_1}}^{2}+416 \cdot\,{  i_1}\,{ i_2}-
1536 \cdot\,{  i_1}\,{  i_3}-768 \cdot\,{{  i_2}}^{2} \right) ^{3}}{{{ i_1}}^
{4}}}
$

\\ \hline

$\gamma_{10}$ &

$2^{-52} \cdot 3^{-15} \cdot\,{\frac { \left( {{ i_1}}^{2}+
416 \cdot\,{  i_1}\,{  i_2}-1536 \cdot\,{  i_1}\,{  i_3}-768 \cdot\,{{ i_2}}^{2}
 \right) ^{5}}{{{  i_1}}^{6}}}
$
 \\ \hline \hline
\end{tabular}
}
}
\end{table}

\id An interesting consequence of this calculation is that the natural map 
\[ \scrM_2 \otimes \ZZ[1/6] = \Spec(\gerR \otimes [1/6]) \Arr \Spec(\ZZ[1/6][i_1, i_2, i_3]) = \AA^3_{\ZZ[1/6]},\]
 can be inverted whenever $i_1 \neq 0$. However, given a triple $(i_1, i_2, i_3)$, which is in the image of the map, and such that $i_1 = 0$, we find that $A= 0$ and hence also that $i_2 = i_3 = 0$. Thus, there is a unique point of $\AA^3$, which is in the image, for which we cannot invert the map and it corresponds to all the genus $2$ curves for which $A = 0$. Thus, the absolute Igusa invariants fail to completely determine the isomorphism class of the curve, but only if $i_1 =0$. 
 
The vanishing locus of $A$ is a surface in $\scrM_2$. 
 There is a natural immersion,
 \[\rho: \scrM_2 \Arr \scrA_{2,1}, \]
 of the moduli space of curves $\scrM_2$ to the moduli space of principally polarized abelian surfaces with no level structure $\scrA_{2, 1}$, sending a curve to its canonically polarized Jacobian. The image is the complement of the Humbert surface $H_1$, which is the divisor of the modular form $\Theta $ defined below. Via this map, each of the Igusa invariants is, in a suitable sense, a pull-back via $\rho$ of a meromorphic Siegel modular form whose polls are supported on $H_1$. These modular forms were calculated by Igusa \cite[p. 177-8]{IgusaRingOfModularFormsOverZ} and the reader is referred to this reference for details. The invariant $D$ is the pullback of a scalar multiple of $\Theta$, defined in page \pageref{Delta2}. There is a modular form of weight $12$, which Igusa denotes $\chi_{12}$, such that, in a suitable sense, $A$ is a scalar multiple of the weight $2$ meromorphic form $\chi_{12}/\Theta$. We have thus, as sets,
 \[ \{ A = 0\} = \rho^{-1} \{ \chi_{12} = 0\}.\]
 The modular form $\chi_{12}$ is a cusp form (see \cite[p. 195]{IgusaSiegel}). However, there does not seem to be any simple interpretation to its vanishing loci. Indeed, the results of \cite{vdG} (see, in particular, \S 8 there) imply that the divisor of $\chi_{12}$ is \emph{not} supported on a union of Humbert surfaces. 
 
\begin{prop}\label{prop: failure of igusa invariants} Let $V \subseteq \scrA_{2, 1}(\CC)$ be the support of the divisor of $\chi_{12}$. There are finitely many primitive CM points on $V$, that is, CM points associated to primitive CM fields of degree four. 
\end{prop}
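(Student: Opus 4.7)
The plan is to combine the Andr\'e--Oort theorem for $\scrA_2$ (proved unconditionally by Pila and Pila--Tsimerman) with the theorem of van der Geer already cited above, which asserts that the divisor of $\chi_{12}$ is not supported on a union of Humbert surfaces.

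First, suppose for contradiction that $V$ contains infinitely many primitive CM points, and let $W$ denote the Zariski closure in $\scrA_{2,1}(\CC)$ of that set, so $W \subseteq V$. Primitive CM points are special points of the Shimura variety $\scrA_{2,1}$, so by Andr\'e--Oort every irreducible component of $W$ is a special (Shimura) subvariety. Since the set of primitive CM points on $V$ is infinite and $W$ has only finitely many irreducible components, at least one component $S$ has positive dimension; since $V$ is a divisor, $\dim S \in \{1, 2\}$.

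Next, rule out $\dim S = 1$. Every $1$-dimensional special subvariety of $\scrA_{2,1}$ is either a Shimura curve arising from an indefinite quaternion algebra $B/\QQ$ (including the split case $B = M_2(\QQ)$, giving curves of products of isogenous elliptic curves), or a curve of the form $\{E_0\} \times \scrA_{1,1}$ for a fixed CM elliptic curve $E_0$. In the first case every abelian surface parameterized by a point on the curve has endomorphism algebra containing $B$, which is non-commutative, contradicting the fact that a primitive quartic CM abelian surface has endomorphism algebra equal to the quartic field $K$. In the second case the parameterized abelian surfaces are never simple, and so never primitively CM. Hence $\dim S = 2$, and by the classification of Shimura sub-data of $(\mathrm{GSp}_4, \gerH_2)$ the surface $S$ must be a Humbert surface. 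This yields a Humbert surface contained in $V$, contradicting van der Geer's theorem \cite{vdG}, and the contradiction proves the proposition.

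The principal obstacle is the appeal to the deep Andr\'e--Oort theorem for $\scrA_2$; once it is available, the classification of positive-dimensional special subvarieties of $\scrA_{2,1}$ (Shimura curves and Humbert surfaces) is classical, and the endomorphism-algebra argument ruling out the Shimura-curve case is routine. A minor technical point to verify is that the primitive CM points under consideration are indeed Hodge-generic special points in the sense required by the Andr\'e--Oort statement, which follows from the fact that primitivity forces the Mumford--Tate torus to be the full CM torus of~$K$.
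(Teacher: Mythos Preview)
Your argument is essentially the same as the paper's: both invoke Andr\'e--Oort to force any positive-dimensional component of the Zariski closure to be special, rule out the one-dimensional case by an endomorphism-algebra argument, and rule out the two-dimensional (Humbert) case via van der Geer. Two small points. First, the paper appeals to Klingler--Yafaev (conditional on GRH), whereas you cite the unconditional Pila--Tsimerman result for $\scrA_2$; the latter is now available, so this is a harmless upgrade. Second, your final step (``a Humbert surface contained in $V$ contradicts van der Geer'') tacitly uses the irreducibility of $V$, which the paper notes follows from Igusa's description of $\scrM_2$: van der Geer only asserts that $V$ is not a \emph{union} of Humbert surfaces, which would not by itself preclude $V$ having one Humbert component alongside a non-Humbert one. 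Once $V$ is irreducible, a $2$-dimensional special $S\subseteq V$ forces $S=V$ and the contradiction is clean. Your closing remark about Hodge-genericity and Mumford--Tate tori is superfluous: Andr\'e--Oort applies to any set of special points, with no extra hypothesis needed on the CM points themselves.
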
 

\begin{proof} Let $S$ be the collection of all primitive CM points on $V$. We note that the description of~$\scrM_2$ implies that $V$ is irreducible. Let $C$ be the Zariski closure of $S$. If $S$ is infinite then $C$ is either a curve, or $V$ itself. In either case, it follows from the Andr\'e-Oort conjecture, known to be true under GRH by the work of Klinger-Yafaev \cite{Yafaev}, that $C$ is either a Shimura curve, or a Shimura surface. It remains to review the possibilities: (i) if $C$ is a Shimura curve then every CM point on $C$ is coming from a bi-quadratic (equivalently, non-primitive) CM field of degree $4$; (ii) if $C = V$ then $V$ is a priori in the Hecke orbit of some Humbert surfaces, but that Hecke orbit is a union of Humbert surfaces (this follows easily from the moduli interpretation). Since the Humbert surfaces in $\scrA_{2, 1}$ are irreducible, $V$ is a Humbert surface itself, which is not the case. 
\end{proof}

\subsection{Igusa class polynomials}\label{section: Igusa class polynomials} 
 In~\cite[\S 5.2]{GL} it was explained how the absolute Igusa invariants can also be expressed in terms of Siegel modular functions.  We summarize this here for the reader's convenience.
 
 \
 \id 
The Igusa functions $i_1$, $i_2$, $i_3$ can be defined as rational functions in
Siegel Eisenstein series,~$\psi_w$, of weights $w=4$, $6$, $10$, $12$. To begin with,
the cusp forms $\Theta$ and $\chi_{12}$, of weights $10$ and $12$, introduced above can be expressed in terms of these Eisenstein series as follows (\cite[p.195]{IgusaSiegel} and \cite[p. 848]{IgusaProj}):
\[
-2^{-2} \Theta = \chi_{10} = \frac{-43867}{2^{12} \cdot 3^{5} \cdot 5^2 \cdot 7 \cdot 53} (\psi_4\psi_6 -\psi_{10})
\]
and 
\[
\chi_{12} = \frac{131\cdot 593}{ 2^{13} \cdot 3^{7} \cdot 5^{3} \cdot 7^{2} \cdot 337} (3^2\cdot 7^2 \psi_4^3 +2 \cdot 5^3 \psi_6^2 -691 \psi_{12}).
\]
Then the {\it Igusa functions\/} $i_1, i_2, i_3$ can be expressed as
\[ i_1 = 2\cdot 3^5 \frac{\chi_{12}^5}{\chi_{10}^6}, \quad 
i_2 = 2^{-3}\cdot 3^3 \frac{\psi_4 \chi_{12}^3}{ \chi_{10}^4}, \quad 
i_3 = 2^{-5} \cdot 3 \frac{\psi_6 \chi_{12}^2 \chi_{10} + 2^{2}\cdot 3 \psi_4 \chi_{12}^3}{ \chi_{10}^4}.
\]
(These are often called $j_1$, $j_2$, $j_3$ in the literature, but we stick with Igusa's notation.)

Let $K$ be a primitive, i.e. not biquadratic, CM field of degree $4$ over $\QQ$.
We define the {\it Igusa class polynomials} to be:
\begin{equation}\label{class polynomials 2} 
h_1(x) = \prod_\tau (x - i_1(\tau)), \quad h_2(x) = \prod_\tau (x - i_2(\tau)), \quad h_3(x) = \prod_\tau (x - i_3(\tau)),
\end{equation}
where the product is taken over all $\tau\in {\rm Sp}(4,\ZZ)\backslash \gerH_2$, such that the associated principally polarized abelian variety has CM by $\calO_K$. 
One can define other absolute invariants, called $\gerj_1$, $\gerj_2$, $\gerj_3$, as in~\cite[p. 473]{GL}, where it is also remarked that
$i_1 = 2^{-12}\gerj_1$ and $i_2 = 2^{-12}\gerj_2$, and then we define the corresponding class polynomials as follows:
\begin{equation} \label{equation for gerh polynomials}\gerh_i(x) = \prod_\tau (x - \gerj_i(\tau)), \qquad i=1,2,3.\end{equation}
The advantage of using these is that it is clear from their definition that they satisfy the hypotheses of our Main Theorem.

\subsection{Rosenhain normal form}
While Igusa's approach to the moduli of genus $2$ curves is through the study of invariants of sextics, we remark that after a finite extension of the base we may always arrange the six ramification points to contain $\{0, 1, \infty\}$ and arrive at an equation of the form
\begin{equation}\label{equation: of a curve}C: y^2 = x(x-1)(x - \lambda_1)(x - \lambda_2)(x - \lambda_3), 
\end{equation}
called the Rosenhain normal form of the curve $C$. The $\lambda_i$'s are in a finite field extension of the field of definition of the curve and are determined up to the action of the stabilizer of the triple $\{0, 1, \infty\}$ in $\PGL_2$ a group isomorphic to the symmetric group $S_3$ on $3$ letters. If $\tau \in \gerH_2$ is the period matrix of the polarized abelian surface $\Jac(C)$, then, up to projective equivalence we may take the $\lambda_i$ to be 

\

\[ \lambda_1 = \frac{
\Theta\left[ \begin{smallmatrix} 1 \\ 1 \\ 0 \\ 0
\end{smallmatrix}\right] (\tau)^2\;
\Theta\left[ \begin{smallmatrix} 1 \\ 0 \\ 0 \\ 0
\end{smallmatrix}\right] (\tau)^2
}{ \Theta\left[ \begin{smallmatrix} 0 \\ 1 \\ 0 \\ 0
\end{smallmatrix}\right] (\tau)^2\;
\Theta\left[ \begin{smallmatrix} 0 \\ 0 \\ 0 \\ 0
\end{smallmatrix}\right] (\tau)^2
}, \quad \lambda_2 = \frac{
\Theta\left[ \begin{smallmatrix} 1 \\
0 \\ 0 \\ 1
\end{smallmatrix}\right] (\tau)^2\;
\Theta\left[ \begin{smallmatrix} 1 \\ 1 \\ 0 \\ 0
\end{smallmatrix}\right] (\tau)^2
}{ \Theta\left[ \begin{smallmatrix} 0 \\ 0 \\ 0 \\ 1
\end{smallmatrix}\right] (\tau)^2\;
\Theta\left[ \begin{smallmatrix} 0 \\ 1 \\ 0 \\ 0
\end{smallmatrix}\right] (\tau)^2
}, \quad
\lambda_3 = \frac{ \Theta\left[ \begin{smallmatrix} 1 \\
0 \\ 0 \\ 1
\end{smallmatrix}\right] (\tau)^2\;
\Theta\left[ \begin{smallmatrix} 1 \\ 0 \\ 0 \\ 0
\end{smallmatrix}\right] (\tau)^2
}{ \Theta\left[ \begin{smallmatrix} 0 \\ 0 \\ 0 \\ 1
\end{smallmatrix}\right] (\tau)^2\;
\Theta\left[ \begin{smallmatrix} 0 \\ 0 \\ 0 \\ 0
\end{smallmatrix}\right] (\tau)^2
}.
\]

\

\id (See \cite[p. 179]{IgusaSiegel}.) Note that if $C$ is defined over some number field and the equation (\ref{equation: of a curve}) is over that field then the $\lambda_i$ appearing there are algebraic numbers. When obtaining a triple of $\lambda_i$ from the period matrix the $\lambda_i$ are in fact algebraic. This is a consequence of the fact that the ratios of the theta functions appearing above are modular functions defined over $\QQ(i)$ (say) and full level $2$.

The theta functions used here are Riemann's theta functions:
Let~$\epsilon, \epsilon' \in \QQ^g$,~$\tau\in
\gerH_g$, and define the \emph{Riemann theta function} with
\emph{characteristic}~$\left[ \begin{smallmatrix} \epsilon \\
\epsilon'
\end{smallmatrix}\right]$ to be the power series
\[ \Theta\left[ \begin{smallmatrix} \epsilon \\ \epsilon'
\end{smallmatrix}\right] (\tau) =
\sum_{N\in
\ZZ^g}e\left(\frac{1}{2}{^t\left(N+\frac{\epsilon}{2}\right)} \tau
\left(N+\frac{\epsilon}{2}\right) +
{^t\left(N+\frac{\epsilon}{2}\right)}\frac{\epsilon'}{2}
\right),\] where~$e(x) = e^{2\pi i x}$. It can be shown that this
series defines a holomorphic function~$\gerH_g \rightarrow \CC$.
If~$\epsilon, \epsilon' \in \ZZ^g$ they are called an
\emph{integral characteristic}. If~$^t\epsilon \epsilon' \equiv 0
\pmod{2}$ they are called \emph{even}, and else \emph{odd}. It
turns out that for an odd characteristic the theta function
vanishes identically, and for even characteristic~$ \Theta\left[
\begin{smallmatrix} \epsilon \\ \epsilon'
\end{smallmatrix}\right] (\tau) ^2$ depends only on~$(\epsilon,
\epsilon')$ modulo~$\ZZ^{2g}$. For $g=1$ this gives us~$3$ (squares of) theta functions and for $g=2$ this gives us ten (squares of) even
theta functions. 

One can show that each~$\Theta\left[ \begin{smallmatrix} \epsilon
\\ \epsilon'
\end{smallmatrix}\right] (\tau)$ to a large enough even power~$2r$ is a
Siegel modular form of weight~$r$ of some level. For~$g=1$, it
goes probably to Jacobi that
\[ \Delta  =  c \prod_{\left[ \begin{smallmatrix} \epsilon
\\ \epsilon'
\end{smallmatrix}\right] \; \text{\rm even}} \Theta\left[ \begin{smallmatrix} \epsilon
\\ \epsilon'
\end{smallmatrix}\right] (\tau)^4, \]
where~$c$ is a constant and~$\Delta = E_4^3 - E_6^2$ is the
classical modular form of weight~$12$. Recall that the divisor of
$\Delta$ is the cusp of~$\SL_2(\ZZ) \backslash \gerH$. Igusa
proved for~$g=2$ that
\[ \Theta: = 2^{-12}\prod_{\left[ \begin{smallmatrix} \epsilon
\\ \epsilon'
\end{smallmatrix}\right] \; \text{\rm even}} \Theta\left[ \begin{smallmatrix} \epsilon
\\ \epsilon'
\end{smallmatrix}\right] (\tau)^2\]\label{Delta2}
is a Siegel modular form of level~$\Symp(4, \ZZ)$ and weight~$10$. The factor of $2$ is introduced to ensure integral Fourier coefficients with gcd $1$. 
The divisor of $\Theta $ is precisely the Humbert divisor~$H_1$ (with multiplicity~$2$). See \S \ref{section: The moduli space of pairs of elliptic curves}.

\

\id Finally, we make some remarks as the utility of the Rosenhain normal form for generating curves of genus $2$ with CM. Given a primitive CM field $K$, it is an easy matter to enumerate representatives for the ideal classes of $K$ and so to get, by varying over all CM types, the period matrices whose classes modulo $\Gamma(2)$ give all the CM points of level $2$ associated to this field. The modular forms used above are of level $2$ and so, by evaluating them on these period matrices, we get a collection of equations $C: y^2 = x(x-1)(x-\lambda_1)(x-\lambda_2)(x - \lambda_3)$ defining, in particular, the isomorphism classes of all the curves of genus $2$ whose Jacobians have CM by $\calO_K$. For a \emph{generic} period matrix $\tau\in \gerH_2$ the $\lambda_i(\tau)$ live in a very large extension $L$ of the field of definition, say $L_0$, of the curve. Let $L' = L(\sqrt{\lambda_1},\sqrt{\lambda_2}, \sqrt{\lambda_3})$; note that $[L': L]\leq 8$. Implicit in the fact that $\lambda_i \in L$ is that all the $2$-torsion of $\Jac(C)$ are defined over $L'$, because under the embedding $C \arr \Jac(C)$ (taking the point at infinity as the base point) the images of the Weierstrass points generate the $2$ torsion subgroup of $\Jac(C)$. For a generic curve, the extension $L'/L_0$ has degree $\sharp\; \Symp(4, \ZZ)/\Gamma(2) = 720$. However, in the case of complex multiplication, the field of definition of $C$ can be taken $H_{K^\ast}$, the Hilbert class field of the reflex field, and so the $\lambda_i$ generates the ray class field of conductor $2$ of $K^\ast$. In fact, since $\Symp(4, \ZZ)/\Gamma(2) \cong \Symp(4, \FF_2) \cong S_6$, the symmetric group on $6$ letters, and since the maximal abelian subgroups of $S_6$ have degrees $5,6, 8, 9$, it follows that $[L':L_0] \vert a$ for some $a\in \{5, 6, 8, 9\}$. This can be utilized to construct curves over a number fields whose Jacobians have CM.

%%%%%%%%%%%%%%%%%%%%%%%%%%%%%%%%%%%%%

\subsection{Ramification locus of $\scrA_{2, N}(\CC) \arr \scrA_{2, 1}(\CC)$} Let $N$ be a positive integer. We denote by $\scrA_{2, N}$\label{A2N} the moduli scheme of principally polarized abelian surfaces with symplectic level $N$ structure over $\Spec(\ZZ[\zeta_N, 1/N])$. $\scrA_{2, 1}\otimes \ZZ[\zeta_N, 1/N]$ is the quotient of $\scrA_{2, N}$ by the finite group $\Symp(4, \ZZ/N\ZZ)/\{ \pm I_4\}$.  We denote the by $H_{\Delta, N}$\label{HDeltaN} the Humbert surface of invariant $\Delta$ (a discriminant of real quadratic order) in $\scrA_{2, N}(\CC)$. It is irreducible for $N=1$, but reducible for $N>1$.

Let $N\geq 3$, so the representation $\Aut(A, \lambda) \arr \Aut(A[3])$ is faithful. The ramification locus of $\pi_N:\scrA_{2, N} \arr \scrA_{2, 1}$ is clearly the locus of points $x$ on $\scrA_{2, 1}$ with non-trivial stabilizers, which, by the moduli interpretation, correspond to principally polarized abelian surfaces $(A, \lambda)$ such that ${\rm r}\Aut(A, \lambda) \neq\{1\}$, where ${\rm r}\Aut$ is the reduced automorphism group (namely the automorphisms $\varphi:A \arr A$ such that $\varphi^\ast \lambda = \lambda$, taken modulo 
$\{\pm 1\}$). Furthermore, in that case, any point in the fibre over $x$ has the same ramification index, equal to the cardinality of ${\rm r}\Aut(A, \lambda)$.

We say that a component of the Humbert divisor $H_{\Delta, N}$ in $\scrA_{2, N}(\CC)$ is ramified if it is contained in the ramification locus and otherwise we say it is unramified. If every component of $H_{\Delta, N}$ is unramified then
\[ \pi_N^\ast (H_{\Delta, 1}) = H_{\Delta, N}.\] 
\begin{lem} \label{Lemma: ramification along Humbert surfaces}If $\Delta \neq 1, 4$ every component of $H_{\Delta, N}$ is unramified. If $\Delta \in \{1, 4\}$ the ramification index along each component of $H_{\Delta, N}$ is $2$.
\end{lem}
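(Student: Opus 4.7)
The plan is to identify the ramification of $\pi_N$ with the stabilizer of level-$N$ structures under $\Symp(4,\ZZ/N\ZZ)/\{\pm I\}$, and then to compute this stabilizer at a generic point of each component of $H_{\Delta,N}$. For $N\geq 3$, any nontrivial element of $\Aut(A,\lambda)$ acts nontrivially on $A[N]$, so the stabilizer of a level structure is precisely ${\rm r}\Aut(A,\lambda)$, and the ramification index of $\pi_N$ above the class of $(A,\lambda)$ equals $|{\rm r}\Aut(A,\lambda)|$. Since the components of $H_{\Delta,N}$ are permuted transitively by the deck transformations of $\pi_N$, they all carry the same ramification index, so it suffices to determine ${\rm r}\Aut(A,\lambda)$ at a generic $(A,\lambda)\in H_{\Delta,1}$.

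Next, I would use the moduli interpretation of the Humbert surface: a generic $(A,\lambda)\in H_{\Delta,1}$ carries a symmetric (Rosati-fixed) endomorphism $\phi$ whose minimal polynomial has discriminant $\Delta$, and for the generic point $\End(A)=\ZZ[\phi]$, since any strict enlargement of the endomorphism ring cuts out a proper closed subscheme of $H_\Delta$. Because the Rosati involution restricts trivially to the real subring $\ZZ[\phi]$, the group $\Aut(A,\lambda)$ coincides with the torsion unit group of $\ZZ[\phi]$, and ${\rm r}\Aut(A,\lambda)=\Aut(A,\lambda)/\{\pm 1\}$. The problem is thereby reduced to counting torsion units in an order of discriminant $\Delta$ of a quadratic étale $\QQ$-algebra.

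The count is a direct case analysis on $\Delta$. When $\Delta$ is not a perfect square, $\ZZ[\phi]$ sits inside an actual real quadratic field and has torsion units $\{\pm 1\}$; when $\Delta=n^2$ with $n\geq 3$, the embedding $\ZZ[\phi]\hookrightarrow\ZZ\times\ZZ$ identifies $\ZZ[\phi]$ with $\{(c,d):c\equiv d\pmod n\}$, whose only units are $\pm(1,1)$; when $\Delta=1$ one has $\ZZ[\phi]=\ZZ\times\ZZ$ with four torsion units $\{\pm 1\}^2$; and when $\Delta=4$, the congruence $1\equiv -1\pmod 2$ yields the four torsion units $\{\pm 1,\pm\phi\}$ with $\phi^2=1$. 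Hence ${\rm r}\Aut(A,\lambda)$ is trivial unless $\Delta\in\{1,4\}$, in which case it is cyclic of order $2$, matching the claimed ramification indices.

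The main obstacle is justifying the genericity claim $\End(A)=\ZZ[\phi]$: loci with strictly larger endomorphism ring correspond either to further real multiplication embeddings (leading to quaternionic surfaces) or to CM by a quartic field, each of which cuts out a proper closed subvariety of $H_\Delta$ and is therefore avoided by the generic point, so a brief dimension count will suffice. A minor additional care is needed to handle the containment $H_1\subset H_4$: on $H_4\setminus H_1$ the ring $\ZZ[\phi]\cong\ZZ[X]/(X^2-1)$ is the non-split order, and its four torsion units still descend to a group of order $2$ modulo $\pm 1$, so the index-$2$ conclusion holds uniformly on every component of $H_4$.
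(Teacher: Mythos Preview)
Your proof is correct and, for the case of square discriminants $\Delta>1$, takes a genuinely different route from the paper. Both arguments begin the same way: the ramification index along a component equals $|{\rm r}\Aut(A,\lambda)|$ at its generic point, and for non-square $\Delta$ the generic endomorphism ring is a real quadratic order with trivial reduced automorphism group. The divergence is in how the square case is handled. The paper invokes the Frey--Kani interpretation of $H_{\Delta,1}$ for square $\Delta$ (genus-$2$ curves covering an elliptic curve) together with Igusa's classification of genus-$2$ curves with nontrivial reduced automorphism group, identifying $H_{4,1}$ as the unique $2$-dimensional such family. Your argument instead stays entirely on the abelian-variety side: you identify the generic endomorphism ring on $H_{n^2,1}$ with the order $O_n=\{(c,d):c\equiv d\pmod n\}\subset\ZZ\times\ZZ$ and count its involutive units directly. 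This is more uniform and more elementary, bypassing the appeal to Igusa's classification; the paper's approach, on the other hand, links the result to the classical geometry of bielliptic curves.

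Two small points of polish. First, the precise statement is that $\Aut(A,\lambda)=\{u\in\End(A)^\times:u^\dagger u=1\}$, which for Rosati-trivial $\End(A)$ means $u^2=1$; in a totally real order every torsion unit has real eigenvalues $\pm 1$, hence squares to $1$, so your ``torsion units'' formulation is correct but this step deserves a sentence. Second, the remark about $H_1\subset H_4$ is unnecessary: $H_{1,1}$ and $H_{4,1}$ are distinct irreducible surfaces in a threefold, so the generic point of $H_{4,1}$ automatically lies off $H_{1,1}$, and your computation for $O_2$ already covers it.
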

\begin{proof} Suppose first that $\Delta$ is not a square. In this case, every abelian variety $(A, \lambda)$ parameterized by $H_{\Delta,N}$ has real multiplication by a real quadratic order of discriminant $\Delta$ and, generically, only by that order. Thus, generically, $\Aut(A, \lambda) = \{ \pm 1\}$ (as the Rosati involution is the identity). That resolves this case.

Suppose now that $\Delta$ is a square, but $\Delta \neq 1$. Then. except of codimension one subset, the points of $H_{\Delta, N}$ correspond to curves $C$ of genus $2$ affording a map of degree $\Delta$, $C\arr E$, to an elliptic curve $E$, that does not factor non-trivially thorough another elliptic curve \cite{FreyKani}. 

From Igusa's classification of $\Aut(C)$ we deduce that there is only one 2-dimensional family of curves of genus $2$ with a non-trivial reduced automorphism group. This family, as one observes, is exactly the curves $C$ of genus $2$ allowing a map of degree $2$, $C \arr E$, to an elliptic curve, ramified at exactly two points of $E$. This family is the Humbert divisor $H_{4, 1}$, and in particular, we have proven the lemma for all cases but $\Delta = 1$.

It is easy to see that for a generic pair of elliptic curves $E_1, E_2$ we have $\Aut(E_1 \times E_2, \lambda_1 \times \lambda_2) = \{ (\pm 1, \pm 1)\}$. Thus, our proof is complete.
\end{proof}

\subsection{Existence of good models for abelian varieties with complex multiplication} Our purpose is to prove the following lemma.
\begin{lem}\label{Lemma: existence of models with good reduction}
Let $(A, \lambda)$ be a $g$-dimensional principally polarized complex abelian variety with complex multiplication by the ring of integers $\calO_K$ of a CM field $K$ of degree $2g$, $\iota:\calO_K \arr \End(A)$. Let $\Phi$ be the associated CM type and $K^\ast$ the reflex field associated to $K$ and $\Phi$. Assume that $\Phi$ is a simple CM type. Let $\gerp$ be a prime of $K^\ast$ and $R$ the completion of the ring of integers of $K^\ast$ by $\gerp$. Then $(A, \lambda, \iota)$ has a model with good reduction over an unramified extension $\calO$ of $R$. 
\end{lem}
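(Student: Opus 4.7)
My plan is to descend $(A,\lambda,\iota)$ to the Hilbert class field $H_{K^\ast}$ of the reflex field and then to verify that the resulting model has good reduction at every finite prime of $H_{K^\ast}$. Since the extension $H_{K^\ast}/K^\ast$ is unramified at all finite primes, the completion $\calO$ of $H_{K^\ast}$ at any prime $\gerP$ above $\gerp$ is an unramified extension of $R$, and this $\calO$ will supply the required model.

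For the descent step, the hypothesis that $\Phi$ is simple is exactly what is needed: under this assumption the main theorem of complex multiplication (Shimura--Taniyama, as developed in Shimura's \emph{Abelian Varieties with Complex Multiplication and Modular Functions}, Ch.~III) asserts that $K^\ast$ is the field of moduli of the polarized CM triple $(A,\lambda,\iota)$, and moreover that $(A,\lambda,\iota)$ admits a model defined over $H_{K^\ast}$.

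To establish good reduction at $\gerP$, I would invoke the N\'eron--Ogg--Shafarevich criterion and reduce to showing that the inertia subgroup $I_\gerP\subset\Gal(\overline{H_{K^\ast}}/H_{K^\ast})$ acts trivially on $T_\ell A$ for some (hence every) prime $\ell$ different from the residual characteristic $p$ of $\gerp$. The main theorem of CM describes this Galois action explicitly: if $\sigma$ corresponds under the Artin reciprocity map to an idele $s\in\AA^\times_{H_{K^\ast}}$, then $\sigma$ acts on $V_\ell A$ via the composite $\AA^\times_{H_{K^\ast}}\to\AA^\times_{K^\ast}\to\AA^\times_K\to(K\otimes\QQ_\ell)^\times$, where the first two arrows are the norm $N_{H_{K^\ast}/K^\ast}$ and the reflex type norm $N_{\Phi^\ast}$. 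For $\sigma\in I_\gerP$ one may take $s$ to be supported at $\gerP$ with $s_\gerP\in\calO_{H_{K^\ast},\gerP}^\times$; its image in $\AA^\times_K$ is then an idele supported only at primes above $p$, taking values in the local units there. Consequently its $\ell$-component is trivial for every $\ell\neq p$, so $I_\gerP$ acts trivially on $T_\ell A$ and good reduction follows.

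The principal technical point is precisely this last local-unit computation for the reflex type norm, i.e.\ that $N_{\Phi^\ast}$ sends inertial local units of $K^\ast$ at $\gerp$ into $(\calO_K\otimes\ZZ_p)^\times$ and into $1$ at every rational prime $\ell\neq p$; this is a formal consequence of the functorial definition of $N_{\Phi^\ast}$ on ideles and its compatibility with the decomposition according to rational primes. The simplicity assumption on $\Phi$ enters both in guaranteeing that the reflex norm actually lands in $K^\times$ (rather than a proper subfield's multiplicative group) and in the descent of the CM structure to $H_{K^\ast}$.
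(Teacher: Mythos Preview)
Your approach is correct and yields the stronger conclusion that the model can be taken over $H_{K^\ast}$ itself, which the paper notes in the remark immediately following its proof (attributing this to ``more subtle results in complex multiplication due to Shimura''). The paper deliberately takes a different, more geometric route: it adds prime-to-$p$ level $N\geq 3$ structure to obtain a fine moduli problem, invokes properness of a smooth toroidal compactification $\scrA_{g,N}^\dagger$ over $\Spec(\ZZ[\zeta_N,1/N])$ together with the valuative criterion to produce a semi-abelian model, and then observes that the toric part of the special fibre must vanish because $[K:\QQ]=2g>g$. Unramifiedness of the resulting base $M[N]/K^\ast$ at $\gerp$ is deduced from the conductor bound in the class-field-theoretic description of $M[N]$ (Lang, Ch.~5, Thms.~3.3 and 4.3). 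Thus the paper trades the full idelic main theorem for moduli-space machinery plus a milder CM input; your argument trades the moduli machinery for the refined Shimura--Taniyama statement.

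One point in your write-up deserves sharpening. The displayed formula ``$\sigma$ acts on $V_\ell A$ via the composite $\AA^\times_{H_{K^\ast}}\to\AA^\times_{K^\ast}\to\AA^\times_K\to(K\otimes\QQ_\ell)^\times$'' is only literally true for a \emph{specific} model over $H_{K^\ast}$, namely the one Shimura constructs; for an arbitrary descent the action differs from this composite by a finite-order character $G_{H_{K^\ast}}\to\mu_K$, and such a character can be ramified at $\gerP$. Your argument as written would then only yield that inertia acts through $\mu_K$, hence good reduction after a possibly (tamely) ramified extension. So you should make explicit that you are invoking Shimura's canonical model, for which the associated algebraic Hecke character is unramified at every finite place---this is the content behind the citation you give, and once stated the rest of your argument goes through cleanly. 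Your closing remark about simplicity ensuring $N_{\Phi^\ast}$ lands in $K^\times$ is not quite right (the reflex norm always lands in $K$); simplicity is needed only for the descent step.
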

\begin{proof}
As is well known (\cite[Ch. 3, Thm. 1.1]{LangCM}), $(A, \iota, \lambda)$ has a model over $H_{K^\ast}$, the Hilbert class field of $K^\ast$, corresponding to an $H_{K^\ast}$-rational point $a \in \scrA_{g, 1}$. In fact, $a$ is defined over the field of moduli $M$ of  $(A, \iota, \lambda)$ which is contained in $H_{K^\ast}$. Let $N\geq 3$ be an integer prime to~$p$. Let $\tilde a$ be a point of $\scrA_{g, N}$ lying above $a$. The field of definition of the point $\tilde a$ is contained in $H_{K^\ast}(A[N])$ and is equal to the field of moduli $M[N]$ of $(A, \iota, \lambda, A[N])$, which, since the moduli scheme is a fine moduli scheme, is also the field of definition of $(A, \iota, \lambda, A[N])$.

Let $\scrA_{g, N}^\dagger$ be a smooth toroidal compactification of $\scrA_{g, N}$ over $\Spec(\ZZ[\zeta_N, 1/N])$. It carries a semi-abelian variety $X$ over it. Choose a prime $\gerP$ of $M[N]$ over $\gerp$. Since the morphism $\scrA_{g, N}^\dagger \arr \Spec(\ZZ[\zeta_N, 1/N])$ is proper, the morphism $\Spec(M[N]_\gerP) \arr  \scrA_{g, N} \injects \scrA_{g, N}^\dagger$ induces by the valuative criterion a morphism $\beta:\Spec(\calO) \arr \scrA_{g, N}^\dagger $, where $\calO$ is the valuation ring of $M[N]_\gerP$. Then $\beta^\ast X$ is a principally polarized semi-abelian variety over $\calO$ whose generic fiber is $(A, \lambda)\otimes M[N]_\gerP$ ($\iota$ extends automatically). As is well known, since $[K:\QQ] = 2g > g$, the toric part of the mod $\gerP$ reduction of $\beta^\ast X$ must be trivial and so $A\otimes M[N]$ has good reduction modulo~$\gerP$.

We claim that the extension $M[N]/K^\ast$ is unramified at $\gerp$ and so $\calO$ is an unramified extension of $R$. This follows from the main theorems of complex multiplication. In fact $M[N]$ is an abelian extension of $K^\ast$ corresponding to a precisely described group of ideals and has conductor dividing $N$. See \cite{LangCM}, Chapter 5, Theorem 4.3 (use also Theorem 3.3). Thus, the extension $M[N]/K^\ast$ is unramified at $\gerp$.
\end{proof}
\begin{rmk} In fact, using more subtle results in complex multiplication due to Shimura, one can conclude the existence of a model over $H_{K^\ast}$ with good reduction at $\gerp$. See \cite[Proposition 2.1]{GorenReduction}.
\end{rmk}
\begin{rmk} The abelian variety $(A, \iota, \lambda)$ has a model over $H_{K^\ast}$, but this model is not unique. In fact, the forms of  $(A, \iota, \lambda)$ over $H_{K^\ast}$ are classified, up to $H_{K^\ast}$ isomorphism, by the Galois cohomology group $H^1(G_{H_{K^\ast}}, \Aut(A, \iota, \lambda))$, where $G_{H_{K^\ast}}$ is the absolute Galois group of $H_{K^\ast}$ and $\Aut(A, \iota, \lambda)$ are the automorphisms commuting with the action of $K$ and preserving the polarization. It is easy to see that $\Aut(A, \iota, \lambda)$ is equal to $\mu_K$, the group of roots of unity lying in $K$. Typically this group is $\{\pm 1\}$ and the forms correspond to quadratic twists, but it may be larger. It can be $\mu_t$ for any $t$ such that $\varphi(t) \vert 2g$. On the other hand, with accordance with the fine moduli space property $(A, \iota, \lambda, A[N])$ has no forms as $\Aut((A, \iota, \lambda, A[N])) = \{ 1\}$ for $N\geq 3$.   
\end{rmk}

\

\

%%%%%%%%%%%%%%%%%%%%%%%%%%%%%%%%%%%

\section{Reduction type of abelian surfaces with complex multiplication} 
\label{section: reduction of abelian surfaces with complex multiplication}

\id Our goal in this section is to study the reduction type of an abelian surface with complex multiplication by a field $K$ modulo a prime ideal of the field of definition, lying above $p$, as a function of the decomposition of the prime $p$ in $K$

\subsection{Combinatorics of embeddings and primes}\label{subsec: embeddings and primes} Let $K$ be a number field and $N$ its normal closure over $\QQ$. Let $G$ be the Galois group $\Gal(N/ \QQ)$, acting on $K$ by $k \mapsto g(k), g\in G$, and let $H = \Gal(N/K) < G$. Fix inclusions 
\[ \varphi_\CC\colon N \arr \CC, \qquad \varphi_p\colon N \arr \Qpbar.\] This allows us to make the following identifications:
\[ \Hom(K, \CC) = \varphi_\CC \circ G/H, \qquad \Hom(K, \Qpbar) = \varphi_p \circ G/H, \]
where a left coset $gH$ gives the embeddings $\varphi_\CC \circ g$ and $\varphi_p \circ g$. We then have an identification
\[ \Hom(K, \CC) = \Hom(K, \Qpbar).\] 
Let $L\supseteq N$ be a finite extension and choose an extension of $\varphi_\CC, \varphi_p$ to $L$. We have the following diagram:
\[\xymatrix@!C=3pc{& L\ar@{-}[d] \ar@{^{(}->}[r]^{\varphi_\CC (\varphi_p)} & \CC ( \Qpbar)\\ & N\ar@{-}[dl]\ar@{-}[dr] & \\K\ar@{-}[dr] & & K^\ast\ar@{-}[dl]\\ & \QQ & }\]
Let $\gerP$ be the maximal ideal of $\Qpbar$. 
The choice of $\varphi_p$ provides us with a prime ideal $\gerp_{L, 1}:= \varphi_p^{-1}(\gerP)$ of $L$, and so with prime ideals $\gerp_{N, 1} = \gerp_{L, 1} \cap N$ of $N$ and $\gerp_{K, 1} = \gerp_{L, 1} \cap K$ of $K$. Let $D$ be the decomposition group of $\gerp_{N, 1}$ in $N$ and $I$ its inertia group. Let $e = \sharp\; I $. The primes ideals above $p$ in $N$ are in bijection with the cosets $G/D$:
\[ p\calO_N = \prod_{\alpha \in G/D} \gerp_{N, \alpha}^e, \qquad \gerp_{N, \alpha} = \alpha(\gerp_{N, 1}).\]
The decomposition (respectively, inertia) group of $\gerp_{N, \alpha}$ is $D^\alpha: = \alpha D \alpha^{-1}$ (respectively, $I^\alpha:= \alpha I \alpha^{-1}$). The primes dividing $p$ in $K$ correspond to the double cosets $H\backslash G /D$. More precisely, 
\[ p \calO_K = \prod_{H\alpha D \in H\backslash G/D}\gerp_{K, \alpha}^{e(\alpha)}, \qquad \gerp_{K, \alpha} = \alpha(\gerp_{N, 1}) \cap K, \]
where, by Lemma~\ref{lemma: inertia in towers} below, $e(\alpha) = [I^\alpha: I^\alpha \cap H]$.

Let $\alpha \in G$; it induces a homomorphism $\varphi_p \circ \alpha\colon K \arr \Qpbar$ that depends only on $\alpha H$. It therefore defines a prime $(\varphi_p \circ \alpha)^{-1} (\gerP)$ of $K$, or more precisely $(\varphi_p\vert_K \circ \alpha)^{-1} (\gerP)$. We have 
\begin{equation}\label{equation: primes and embeddings} (\varphi_p\vert_K \circ \alpha)^{-1} (\gerP) = (\alpha^{-1} \varphi_p\vert_N^{-1} (\gerP)) \cap K = \alpha^{-1}(\gerp_{N, 1})\cap K = \gerp_{K, \alpha^{-1}}.
\end{equation}
 That is, \emph{the coset $\alpha H$ corresponding to an embedding $K \arr \Qpbar$ induces the prime  corresponding to the double coset $H \alpha^{-1} D$.} (This ``inversion" is a result of our definition of $\gerp_{N, \alpha}$ as $\alpha(\gerp_{N, 1})$, as opposed to $\alpha^{-1}(\gerp_{N, 1})$, made in order to conform with \cite{GorenReduction}.)

\

\id Suppose that we are given a finitely generated torsion free $\calO_L$-module $M$ on which $\calO_K$ acts as endomorphisms. Then $M_\CC = M \otimes_{\calO_L, \varphi_\CC}\CC$ is a finite dimensional vector space over $\CC$, which is an $\calO_K \otimes_\ZZ \CC = K \otimes_\QQ \CC$ module. We have then a decomposition
\begin{equation}\label{equation: decomposition of MC} M_\CC = \oplus_{\varphi\in \Hom(K, \CC)} M_\CC(\varphi) = \Hom_{\alpha \in G/H} M_\CC(\alpha),
\end{equation}
where $M_\CC(\varphi)$ is the eigenspace for the character $\varphi \colon K \arr \CC$, where, using the identifications $\Hom(K, \CC) = \Hom(K, N) = G/H$, we have let  $M_\CC(\alpha): = M_\CC(\varphi_\CC\circ \alpha)$. We assume that each eigenspace is either zero or one dimensional and so we get a subset \[\Phi \subset \Hom(K, N),\] corresponding to the non-trivial eigenspaces. We call $\Phi$ a ``CM type", although none of the fields appearing in our discussion so far needs to be CM.

On the other hand, we also have the finite dimensional $\Qpbar$-vector space $M_p: = M \otimes_{\calO_L, \varphi_p} \Qpbar$, grace of the homomorphism $\varphi_p\colon L \arr \Qpbar$, which is an $\calO_K \otimes_\ZZ \Qpbar = K \otimes_\QQ \Qpbar$-module. Since all the homomorphisms $K \arr \Qpbar$ factor as $K \arr N \overset{\varphi_p}{\Arr} \Qpbar,$ we have a decomposition, similar to the one in (\ref{equation: decomposition of MC}),
\begin{equation}\label{equation: decomposition of Mp} M_p = \oplus_{\varphi\in \Hom(K, \Qpbar)} M_p(\varphi)  = \oplus_{\alpha \in G/H} M_p(\alpha).\end{equation}
Moreover, for each $\alpha \in G/H$ there is a one dimensional  $L$-subspace $M_L(\alpha)$ of $M_L:= M \otimes_{\calO_L} L$ such that 
\[M_\CC(\alpha) = M_L(\alpha)\otimes_{L,\varphi_\CC} \CC, \qquad M_p(\alpha) = M_L(\alpha)\otimes_{L,\varphi_p} \Qpbar.\]
And so, in the obvious sense, $\Phi$ becomes a ``$p$-adic CM type" as well.

Now, the decomposition in (\ref{equation: decomposition of Mp}) can be packaged as follows: We have $\calO_K \otimes_\ZZ \ZZ_p = \oplus_{\gerp \vert p} \calO_{K_\gerp}$ and thus $K \otimes_\QQ \Qpbar = \oplus_{\gerp \vert p} (K_\gerp \otimes_{\QQ_p} \Qpbar)$, or
\[ K \otimes_\QQ \Qpbar = \oplus_{\alpha \in H\backslash G/D} (K_{\gerp_{K, \alpha}}\otimes_{\QQ_p} \Qpbar).\]
This decomposition induces a decomposition
\begin{equation}\label{equation: decomposition of Mp using ideals} M_p = \oplus_{\alpha \in H\backslash G/D} M_{p, \alpha}.\end{equation}
Note that, due to (\ref{equation: primes and embeddings}), the relation between (\ref{equation: decomposition of Mp}) and (\ref{equation: decomposition of Mp using ideals}) is (sic!)
\begin{equation}\label{equation: type and primes}
M_p(\alpha) \subseteq M_{p, \alpha^{-1}}.
\end{equation}

\subsection{The case of quartic fields and Dieudonn\'e modules} Let $K$ be a CM field of degree four over the rational numbers and let $A$ be a principally polarized abelian surface with complex multiplication by $\calO_K$, CM type $\Phi$, defined over a field $L$ and having everywhere good reduction. Let $K^\ast$ be the reflex field. We assume that $L$ contains a normal closure $N$ of $K$ and let $G = \Gal(N/\QQ)$. Thus, our notation conforms with the one in the previous section. 

Let $K^+$ be the totally real subfield of $K$. Let $p$ be a prime number. Our purpose is to determine the reduction $\bar A$ of $A$ modulo a prime ideal $\gerp_L$ of $L$. It follows from results of C.-F. Yu \cite{Yu} that the Dieudonn\'e module of $\bar A$ is determined uniquely by $\Phi$ and the prime decomposition of $p$ in $K$ (and not just in the case of surfaces). A fortiori, the Ekedahl-Oort strata in which it falls is determined. In the case of surfaces, the complete information is contained in two numbers
\[ a(\bar A) = \dim \Hom_{\fpbar}(\alpha_p, \bar A\otimes \fpbar), \qquad f(\bar A) = \log_p \sharp A[p](\fpbar),\]
the $a$-number and $f$-number. 

We the situation more explicit that in loc. cit., and provide a self-contained proof in our case. We will have several fields to consider $N$, $K$,  $K^\ast$ (the reflex field determined by $K$ and $\Phi$), and their totally real subfields $K^+$ and $K^{\ast+}$, respecively. The basic information is the factorization of $p$ in $N$. As above, we fix a prime ideal $\gerp  = \gerp_{N, 1} = \gerp_L\cap N$ of $N$. The decomposition of $p$ in each field is determined by the pair of subgroups $(I, D)$, where $I$ is the inertia group of $\gerp$ in $N$ and $D$ is its decomposition group. 
The pair of subgroups $(I, D)$ of $\Gal(N/\QQ)$ must satisfy the two restrictions:
\begin{itemize}\item $I \normal D$;
\item $D/I$ is a cyclic group. 
\end{itemize}
As explained above, having chosen $\gerp$, we may index the primes dividing $p$ in $N$ by coset representatives for $D$ in $G$. If these coset representatives are $a, b, c, \dots$ (so $G = aD \sqcup bD \sqcup cD \sqcup \dots$) then we write $p\calO_N = \gerp_{N, a}^e \gerp_{N, b}^e \gerp_{N, c}^e \cdots $, where $e = \sharp I$ and $\gerp_{N, a} := a(\gerp_N)$ (and in particular, $\gerp_{N, 1} = \gerp$). If the primes appearing in the decomposition of $p$ in $N$ are determined by $G/D$ then the primes appearing in the decomposition of $p$ in a subfield $N^H$ of $N$, corresponding to a subgroup $H$ of $G$, are determined by $H\backslash G /D$. As above, we shall denote such primes by $\gerp_{N^H, x}$ where $x$ is a representative for a double coset $HxD$. (This is consistent with the previous notation for $H = \{1\}$.) The following lemma is used to determine ramification in subfields. 
\begin{lem} \label{lemma: inertia in towers} Let $Q \subset B \subset N$ be three number fields where $N/Q$ is Galois with Galois group $G$. Let $B$ correspond to a subgroup $H$ of $G$. Let $\gerp_N$ be a prime ideal of $N$, $\gerp_B = \gerp_N \cap B$ and $\gerp_Q = \gerp_N \cap Q$. Let $I(\gerp_N)$ be the inertia group in $G$. Then,
\[e(\gerp_B/\gerp_Q) = [I(\gerp_N): I(\gerp_N) \cap H]\]
and
\[e(\gerp_N/\gerp_B) = \sharp\; I(\gerp_N) \cap H.\]
\end{lem}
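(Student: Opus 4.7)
The plan is to reduce everything to the Galois extension $N/B$ and use multiplicativity of ramification indices, noting that $B/Q$ need not be Galois so we cannot appeal to inertia groups over $Q$ directly through $B$.

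First I would observe that since $B = N^H$, the extension $N/B$ is Galois with group $H$. For this Galois extension, the inertia group of the prime $\gerp_N$ (viewed inside $H$) is exactly the set of $\sigma \in H$ which act trivially on the residue field of $\gerp_N$; but this is the same condition cutting out $I(\gerp_N)$ inside $G$, intersected with $H$. Hence the inertia group of $\gerp_N$ in the Galois extension $N/B$ is $I(\gerp_N) \cap H$. Since for any Galois extension of local (or global, at a chosen prime) number fields the order of the inertia group equals the ramification index of the prime below, this gives the second assertion
\[
e(\gerp_N/\gerp_B) \;=\; \#\bigl(I(\gerp_N) \cap H\bigr).
\]

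For the first assertion I would use the tower formula $e(\gerp_N/\gerp_Q) = e(\gerp_N/\gerp_B)\, e(\gerp_B/\gerp_Q)$, together with the fact that, because $N/Q$ is itself Galois, $e(\gerp_N/\gerp_Q) = \#\, I(\gerp_N)$. Combining with the second assertion yields
\[
e(\gerp_B/\gerp_Q) \;=\; \frac{\#\, I(\gerp_N)}{\#\bigl(I(\gerp_N) \cap H\bigr)} \;=\; [I(\gerp_N) : I(\gerp_N) \cap H],
\]
which is the first displayed formula.

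There is essentially no obstacle here: the only point that needs a moment's care is recognizing that the inertia group of $\gerp_N$ in the Galois subextension $N/B$ is literally $I(\gerp_N)\cap H$, which is immediate from the characterization of inertia as those Galois elements acting trivially on the residue field at $\gerp_N$. The apparent asymmetry between $B/Q$ (possibly non-Galois, so handled by multiplicativity) and $N/B$ (Galois, so handled directly) is the only subtlety worth flagging in the write-up.
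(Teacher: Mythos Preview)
Your proof is correct. The paper does not actually supply a proof of this lemma; it simply cites Lemma~3.3.29 of Cohen's \emph{Number Theory, Vol.~I}, so your self-contained argument (identify the inertia group in the Galois subextension $N/B$ as $I(\gerp_N)\cap H$, then use multiplicativity of $e$ in the tower) is exactly the standard proof one would expect to find behind that citation.
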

\begin{proof} This is Lemma 3.3.29 in \cite{Cohen}. \end{proof}

\medskip

\id The main tool for studying the reduction $\bar{A} = A \pmod{\gerp_L}$ of the abelian surface $A$ is the following. Let $\DD$ be the Dieudonn\'e module of $\bar A[p]$ over $\fpbar$.\label{DD} The formalism of the previous section will be applied to the algebraic first de Rham cohomology of $A/L$, serving as $M$ in the previous section, which by base change gives us the complex de Rham cohomology of $A$ as well as the crystalline cohomology of $\bar{A}$, of which $\DD$ is the reduction modulo $p$. 

The $a$-number and $f$-number of $\bar{A}$ can of course be read from $\DD$. The Dieudonn\'e module has a decomposition relative to the $\calO_{K^+}$ action and a refined decomposition relative to the $\calO_K$ action. Using $\gerp_{K^+}$ to denote a prime ideal of $\calO_{K^+}$ above $p$ and similarly for $\gerp_K$, we have, by virtue of the decompositions $\calO_{K^+} \otimes \ZZ_p = \oplus_{\gerp_{K^+}} \calO_{{K^+}, \gerp_{K^+}}$, $\calO_K \otimes \ZZ_p = \oplus_{\gerp_K} \calO_{K, \gerp_K}$, induced decompositions
\[ \DD = \oplus_{\gerp_{K^+}} \DD(\gerp_{K^+}), \qquad \DD(\gerp_{K^+}) = \oplus_{\gerp_K\vert \gerp_{K^+}}\DD(\gerp_K). \]
Here each $\DD(\gerp_{K^+})$ is a self-dual Dieudonn\'e module of dimension $2e(\gerp_{K^+}/p)f(\gerp_{K^+}/p)$, which is then decomposed in Dieudonn\'e modules $\DD(\gerp_K)$ of dimension $e(\gerp_K/p)f(\gerp_K/p)$. On $\DD(\gerp_{K^+})$ there is an action of $\calO_{{K^+}, \gerp_{K^+}} \otimes \fpbar \cong \oplus_{\alpha} \fpbar[t]/(t^e)$, where the summation is over embeddings $\alpha$ of the maximal unramified subring $\calO_{{K^+}, \gerp_{K^+}}^{\rm ur}$ of $\calO_{{K^+}, \gerp_{K^+}}$ into $W(\fpbar)$ and $e = e(\gerp_{K^+}/p)$. There is a similar and compatible decomposition of $\calO_{K, \gerp_K} \otimes \fpbar$. These decompositions induce decompositions of the Dieudonn\'e modules $\DD(\gerp_{K^+}), \DD(\gerp_K)$, such that $\DD(\gerp_{K^+}) = \oplus_\alpha \DD(\gerp_{K^+}, \alpha), \DD(\gerp_K) = \oplus_\alpha \DD(\gerp_K, \alpha)$. $\DD(\gerp_{K^+}, \alpha)$ is a vector space of dimension $2e(\gerp_{K^+}/p)$, which is a free rank $2$  module over $\fpbar[t]/(t^e)$ on which $\calO_{{K^+}, \gerp_{K^+}} = \calO_{{K^+},\gerp_{K^+}}^{\rm ur}[\pi]$ acts via the map $\bar \alpha: 
\calO_{{K^+},\gerp_{K^+}}^{\rm ur} \arr \fpbar$ and $\pi$, which is an Eisenstein element, acts via $t$. A similar and compatible description is obtained for $\DD(\gerp_K, \alpha)$. Frobenius induces maps $\DD(\gerp_{K^+}, \alpha) \arr \DD(\gerp_{K^+}, \sigma \circ \alpha)$.

Implicit in our considerations is the identification of $\Hom(K, N)$ with $\Hom(K, \overline{\QQ}_p)$, where $N$ is a normal closure of $K$. 
This identification is done as discussed in detail above. In particular, we note that the subspace $\DD(\gerp_{K}, \alpha)$ is associated with the prime ideal $\gerp_{K, \alpha^{-1}}$. Since $H^0(\bar{A}, \Omega^1_{\bar{A}, \fpbar}) \subset \DD$, any $\alpha \in \Phi$ contributes $1$ to the dimension of the kernel of Frobenius on $\DD(\gerp_{K, \alpha^{-1}})$. This often allows us to conclude that $\Fr^2 = 0$ on $\DD$, which implies $a = 2, f = 0$ and, so, superspecial reduction.

\

Another useful tool to quickly decide some properties of the reduction is the following relation. Let $K^\ast$ be the reflex field defined by the CM type of the abelian variety under consideration and let $\Phi^\ast$ be the reflex type. Let $\gerp_{K^\ast, 1} = \gerp_{N, 1} \cap K^\ast$. Then some power of $\Norm_{\Phi^\ast}(\gerp_{K^\ast, 1})$ is equal to a power of $\Fr$, viewed as endomorphisms of the reduction. One can be more precise (see \cite{LangCM}), but we note that this suffices to calculate the $f$-number of the reduction. 

\subsection{$K$ cyclic Galois} In this case $K = N= K^\ast$. The Galois group is cyclic of order $4$, generated by $g$, say, where $g^2$ is complex conjugation. The CM types are either $\{1, g\}, \{g, g^2\}, \{g^2, g^3\}$ or $\{g^3, 1\}$. Since the reduction type does not depend on the way $K$ is embedded in $A$, namely we can compose with an automorphism $K \arr K$, we may assume that the CM type is $\{1, g\}$. The reflex CM field $K^\ast$ is $K$ and $\Phi^\ast = \{1, g^{-1}\}$. We have the following possibilities. 

\begin{center}
{\tiny
\begin{table}
\caption{Reduction in the cyclic case.}
\begin{tabular}{||p{.5 cm}|p{1.2 cm}|p{1.2 cm}|p{2.8 cm}|p{2.8 cm}|p{0.7 cm}|p{0.7 cm}|p{1 cm}||}
\hline\hline
& $I$ & $D$ & decomposition of $p$ in $K=K^\ast$ & decomposition of $p$ in $K^+$ & $a$ & $f$ & super-special?\\\hline
i& $\{1\}$ & $\{1\}$ & $\gerp_{K, 1}\gerp_{K, g}\gerp_{K, g^2}\gerp_{K, g^3}$ & $\gerp_{K^+, 1} \gerp_{K^+, g}$ & $0$ & $2$ & $\times$ \\\hline
ii & $\{1\}$ & $\{1,g^2\}$ & $\gerp_{K,1}\gerp_{K, g}$ & $\gerp_{K^+, 1} \gerp_{K^+, g}$ & $2$ & $0$ & $\surd$\\\hline
iii & $\{1\}$ & $G$ & $\gerp_{K, 1}$ & $\gerp_{K^+, 1}$ & $1$ & $0$ & $\times$\\\hline

\hline

iv & $\{1, g^2\}$ & $\{1, g^2\}$ & $\gerp_{K, 1}^2\gerp_{K, g}^2$ & $\gerp_{K^+, 1}\gerp_{K^+, g}$ & $2$ & $0$ & $\surd$\\\hline
v & $\{1, g^2\}$ & $G$ & $\gerp_{K, 1}^2$ & $\gerp_{K^+, 1}$ & $2$ & $0$ & $\surd$\\\hline
vi&$G$ & $G$ & $\gerp_{K, 1}^4$ & $\gerp_{K^+, 1}^2$ & $2$ & $0$ & $\surd$\\\hline\hline
\end{tabular}
\end{table}}
\end{center}

\

\id The unramified case appears in \cite{GorenReduction}, but we shall do one case to illustrate our method.
Consider the case ii. We have a decomposition 
\[\DD = \DD(\gerp_{K^+, 1}) \oplus \DD(\gerp_{K^+, g}), \]
and $\DD(\gerp_{K^+, i})$, $i=1, g$, is a two dimensional $\fpbar$-vector space that does not decompose further relative to the $\calO_{K^+}$ action. However, $\DD(\gerp_{K^+, i}) = \DD(\gerp_{K, i})$, because $\gerp_{K^+, i}$ is inert in $K$, and
\[ \DD(\gerp_{K, i}) = \DD(\gerp_{K, i}, \alpha) \oplus \DD(\gerp_{K, i}, \sigma \circ \alpha).\]
Frobenius takes $\DD(\gerp_{K, i}, \alpha)$ to $\DD(\gerp_{K, i}, \sigma \circ \alpha)$, and vice-versa. The CM type is $\{1, g\}$ and we note that $g$ switches $\gerp_{K, 1}$ and $\gerp_{K, g}$. This means that the cotangent space, or rather $H^0(A, \Omega^1_{A/\fpbar}) \otimes_{\fpbar,\sigma} \fpbar = \DD(\Ker \; \Fr)$, which is an $\calO_K$-module, is not contained completely in any of $\DD(\gerp_{K, i})$. Thus, Frobenius has a kernel on each of $\DD(\gerp_{K, i})$. It follows that $\Fr^2$ is zero on each $\DD(\gerp_{K, i})$ and hence on $\DD$ and that implies that $a(\bar A) = 2$, by a well known and elementary argument and $f(\bar A) = 0$.

In case iv we again have \[\DD = \DD(\gerp_{K^+, 1}) \oplus \DD(\gerp_{K^+, g}), \]
and $\DD(\gerp_{K^+, i})$ is a two dimensional $\fpbar$-vector space that does not decompose further relative to the $\calO_{K^+}$ action. However, $\DD(\gerp_{K^+, i}) = \DD(\gerp_{K, i})$ and $\DD(\gerp_{K, i})$ becomes a rank $1$ module over $\fpbar[t]/(t^2)$ by using the $\calO_K$ action and Frobenius is a module homomorphism. Once more, since $g$ permutes $\gerp_{K^+, 1}$ and $\gerp_{K^+, g}$, it follows that Frobenius has a kernel on each of $ \DD(\gerp_{K^+, i})$ and since the dimension of the kernel of Frobenius is two, it follows that the kernel Frobenius must be 
$(t) \oplus (t)\subset D(\gerp_{K, 1}) \oplus D(\gerp_{K, g}) $ and $\Fr^2 = 0$. 

In case v, after a similar analysis we reach the conclusion that $\DD = \fpbar[t]/(t^2) \oplus  \fpbar[t]/(t^2)$ and that Frobenius, which commutes with the $ \fpbar[t]/(t^2)$ structure, permutes the components. Whether the kernel of Frobenius is one of the components, or the submodule $(t) \oplus (t)$, we have $\Fr^2 = 0$ (in fact, taking into consideration the CM type we must have the kernel is $(t) \oplus (t)$, but this is not important at present). 

In case vi we conclude that $\DD = \fpbar[t]/(t^4)$ and that Frobenius acts as a $\fpbar[t]/(t^4)$-module homomorphism. It follows that the kernel of Frobenius, being an $\fpbar[t]/(t^4)$-module is $(t^2)$ and so is the image. Hence $\Fr^2 = 0$ again. 

\subsection{$K$ biquadratic} In this case $K = N$ is the compositum $K_1K_2$ where $K_i$ are quadratic imaginary fields. Let $K^+$ be the totally real subfield of $K$. Write the Galois group is $\{1, \alpha_1, \alpha_2, \beta\}$ where $K_i$ is fixed by $\alpha_i$ and $\beta$ is complex conjugation. We have the following diagram:

\[ \xymatrix{ & K\ar@{-}[dl]_{\langle \alpha_1 \rangle}\ar@{-}[d]^{\langle \beta \rangle}\ar@{-}[dr]^{\langle \alpha_2 \rangle} & \\ K_1\ar@{-}[dr] & K^+ \ar@{-}[d] & K_2\ar@{-}[dl] \\ & \QQ &}\]
The possible CM types are $\{1, \alpha_i\}, \{\beta, \alpha_i\}$ and twisting the action of $\calO_K$ by an automorphism we may assume the CM type is $\{1, \alpha_1\}$ or $\{1, \alpha_2\}$. The situation being symmetric we assume w.l.o.g that the CM type is $\{1, \alpha_1\}$. The reflex CM field is $K_1$ and the reflex CM type is $\{1\}$. In this case $A$ is isogenous to $E\otimes_\ZZ \calO_L$, or equivalently to $E \otimes_{K_1} K$, where $E$ is an elliptic curve with CM by $\calO_{K_1}$. Thus, $\bar A$ is ordinary if $p$ is split in $K_1$ and supersingular otherwise (and in that case one still needs to figure out its $a$ number). Now, $p$ is split in $K_1$ if and only if $\langle D, \alpha_1 \rangle \neq G$.

\begin{center}
{\tiny
\begin{table}
\caption{Reduction in the bi-quadratic case.}
\begin{tabular}{||p{.5 cm}|p{1.2 cm}|p{1.2 cm}|p{2.8 cm}|p{2.8 cm}|p{0.7 cm}|p{0.7 cm}|p{1 cm}||}
\hline\hline
& $I$ & $D$ & decomposition of $p$ in $K=K^\ast$ & decomposition of $p$ in $K^+$ & $a$ & $f$ & super-special?\\\hline
i& $\{1\}$ & $\{1\}$ & $\gerp_{K, 1}\gerp_{K, \alpha_1}\gerp_{K, \beta}\gerp_{K, \alpha_2}$ & $\gerp_{K^+, 1} \gerp_{K^+, \alpha_1}$ & $0$ & $2$ & $\times$ \\\hline

ii & $\{1\}$ & $\{1,\alpha_1\}$ & $\gerp_{K,1}\gerp_{K, \beta}$ & $\gerp_{K^+, 1} $ & $0$ & $2$ & $\times$\\\hline

iii & $\{1\}$ & $\{1, \beta\}$ & $\gerp_{K, 1}\gerp_{K, \alpha_1}$ & $\gerp_{K^+, 1}\gerp_{K^+, \alpha_1}$ & $2$ & $0$ & $\surd$\\\hline

iv & $\{1\}$ & $\{1, \alpha_2\}$ & $\gerp_{K, 1}\gerp_{K, \beta}$ & $\gerp_{K^+, 1}$ & $2$ & $0$ & $\surd$\\\hline

\hline

v & $\{1, \alpha_1\}$ & $\{1, \alpha_1\}$ & $\gerp_{K, 1}^2\gerp_{K, \beta}^2$ & $\gerp_{K^+, 1}^2$ & $0$ & $2$ & $\times$\\\hline

vi&$\{1, \alpha_1\}$ & $G$ & $\gerp_{K, 1}^2$ & $\gerp_{K^+, 1}^2$ & $2$ & $0$ & $\surd$\\\hline

vii & $\{1, \beta\}$ & $\{1, \beta\}$ & $\gerp_{K, 1}^2\gerp_{K, \alpha_1}^2$ & $\gerp_{K^+, 1}\gerp_{K^+, \alpha_1}$ & $2$ & $0$ & $\surd$\\\hline

viii&$\{1, \beta\}$ & $G$ & $\gerp_{K, 1}^2$ & $\gerp_{K^+, 1}$ & $2$ & $0$ & $\surd$\\\hline

ix & $\{1, \alpha_2\}$ & $\{1, \alpha_2\}$ & $\gerp_{K, 1}^2\gerp_{K, \beta}^2$ & $\gerp_{K^+, 1}^2$ & $2$ & $0$ & $\surd$\\\hline

x&$\{1, \alpha_2\}$ & $G$ & $\gerp_{K, 1}^2$ & $\gerp_{K^+, 1}^2$ & $2$ & $0$ & $\surd$\\\hline

xi&$G$ & $G$ & $\gerp_{K, 1}^4$ & $\gerp_{K^+, 1}^2$ & $2$ & $0$ & $\surd$\\\hline\hline
\end{tabular}
\end{table}}
\end{center}

\

\id Consider for example case vi. After the usual analysis we find that 
$\DD \cong \fpbar[t]/(t^2) \oplus \fpbar[t]/(t^2)$, where $\Fr$ is $\fpbar[t]/(t^2)$ $\sigma$-linear and switches the components. Its kernel is then either one of the components, or the submodule $(t) \oplus (t)$. In any case, $\Fr^2 = 0$ and so $a = 2$. Cases vii,  viii and x lead exactly to the same setting. 
 
In case ix, once again $\DD \cong \fpbar[t]/(t^2) \oplus \fpbar[t]/(t^2)$ but now $\Fr$ acts on each component separately. $\bar A$ is ordinary if the kernel of $\Fr$ is one of the components and is superspecial if the kernel is $(t) \oplus (t)$. Since ordinary is not possible, because $p$ is inert in $K_1$ (or, we can argue by using the CM type that Frobenius has a kernel on each component), we are in the superspecial case. 

In case xi we find that $\DD \cong \fpbar[t]/(t^4)$ and we must have that the kernel of Frobenius is the submodule $(t^2)$. It follows that $\Fr^2 = 0$.

\subsection{$K$ non-Galois} In this case the normal closure of $K$ is a Galois extension $N/\QQ$ of degree $8$ and Galois group $D_4$. As above, we view $N$ as embedded in $\CC$. $K$ is the fixed field of a non-central involution we call $x$. Let $y$ be an element of order $4$, then $y^2$ is complex conjugation and $xyx = y^{-1} = y^3$.
We identify $\Hom(K, \CC)$ with $\{1, y, y^2, y^3\}$ and the CM types are $\{1, y\}, \{y^2, y^3\}, \{1, y^3\}$ and $\{y^2, y^3\}$. We may twist the action of $K$ by complex conjugation and so assume that the CM type is $\{1, y\}$ or $\{1, y^3\}$. If it is $\{1, y^{-1}\}$ we can change the presentation of our group by using the generator $y^{-1}$ instead of $y$. We can therefore assume that $K$ is fixed by $x$, the Galois group is $\langle x, y| x^2, y^4, xyxy\rangle$ and the CM type is $\{1, y\}$. The reflex CM field $K^\ast$ is then fixed by $\{1, xy^3\}$ (follow the recipe in \cite[Ch. 1, Theorem 5.1]{LangCM}) and the reflex CM type is $\{1, y^{-1}\}$.

We have the following diagrams of fields and subgroups:

\[{\tiny
\xymatrix@C=0.5pt{ & & \{1\}& & \\ \{1, x\}\ar@{-}[urr] & \{1, xy^2\}\ar@{-}[ur] & \{1, y^2\}\ar@{-}[u] & \{1, xy \}\ar@{-}[ul] & \{1, xy^3\}\ar@{-}[ull] 
\\ \{1, x, xy^2, y^2\}\ar@{-}[urr]\ar@{-}[ur]\ar@{-}[u] & & \{1, y, y^2, y^3 \}\ar@{-}[u] && \{1, xy, y^2, xy^3\}\ar@{-}[ull]\ar@{-}[ul]\ar@{-}[u] \\
&& G\ar@{-}[urr]\ar@{-}[u]\ar@{-}[ull] &&
}
\quad
\xymatrix{& & N & & \\ K\ar@{-}[urr] & \cdots& N^+\ar@{-}[u] & \cdots & K^\ast \ar@{-}[ull]\\ K^+\ar@{-}[u]\ar@{-}[urr] & &\cdots & & K^{\ast +} \ar@{-}[u]\ar@{-}[ull]\\ & & \QQ\ar@{-}[urr]\ar@{-}[ull] & &}}
\]

\

\id
The analysis of the reduction of $A$ proceeds along the same lines as above. Namely, one considers the decomposition of the Dieudonn\'e module as a module over $\calO_K \otimes \fpbar$ and the induced action of Frobenius, which is $1\otimes \sigma$-linear, so to say. In most cases, this suffices to determine the $a$ and $f$ numbers, but in certain cases one needs to decide between two possibilities, and there the CM type matters. The interpretation of the CM type mod $p$ is done through the formalism of \S\ref{subsec: embeddings and primes}.

For example, referring to the table, in case viii we find that $\DD \cong \fpbar[t]/(t^2) \oplus \fpbar[t]/(t^2)$ and Frobenius acts $\sigma$-$\fpbar[t]/(t^2)$ linearly (meaning, it acts $\sigma$-linearly on $\fpbar$ and commutes with $t$) on each component. The kernel, a-priori could be one of the components or the submodule $(t) \oplus (t)$. Taking the CM type into consideration, we see that Frobenius has a kernel in each component and so its kernel is $(t) \oplus (t)$. It follows that $\Fr^2 = 0$. Case x is the same.

Case ix is easier as in this case $\DD \cong \fpbar[t]/(t^2) \oplus \fpbar[t]/(t^2)$, where $\Fr$ is acting $\sigma$-$\fpbar$-linearly, but permutes the components. The kernel is either one of the components or the submodule $(t) \oplus (t)$ and, regardless, $\Fr^2 = 0$. Case xi is the same. 

\begin{landscape}
{\tiny
\begin{center}
\begin{table}
\caption{Reduction in the non Galois case.}
\renewcommand{\arraystretch}{1.5}
\begin{tabular}{||p{.5 cm}|p{1 cm}|p{1 cm}|p{2.4 cm}|p{2.4 cm}|p{1.8 cm}|p{3 cm}|p{1.8 cm}|| p{1.4 cm}||p{0.3 cm}|p{0.3 cm}|p{0.5 cm}||}
\hline\hline
& $I$ & $D$ & decomposition of $p$ in $N$ &decomposition of $p$ in $K$ & decomposition of $p$ in $K^+$ & decomposition of $p$ in $K^\ast$&decomposition of $p$ in $K^{\ast +}$ & $N_{\Phi^\ast}(\gerp_{K^\ast, 1})$& $a$ & $f$ & ss?\\\hline

i& $\{1\}$ & $\{1\}$ & $\prod_{\alpha\in G}\gerp_{N, \alpha}$ & $\gerp_{K, 1}\gerp_{K, y}\gerp_{K, y^2}\gerp_{K, y^3}$ &$\gerp_{K^+, 1}\gerp_{K^+, y}$&$\gerp_{K^\ast, 1}\gerp_{K^\ast, y}\gerp_{K^\ast, y^2}\gerp_{K^\ast, y^3}$ &$\gerp_{K^{^\ast+}, 1}\gerp_{K^{^\ast+}, y}$ & $\gerp_{K, 1}\gerp_{K, y^3}$ & $0$ & $2$ & $\times$ \\\hline

ii & $\{1\}$ & $\langle x\rangle$ & $\gerp_{N,1}\gerp_{N,y}\gerp_{N,y^2}\gerp_{N,y^3}$ & $\gerp_{K, 1}\gerp_{K, y}\gerp_{K, y^2}$ & $\gerp_{K^+, 1}\gerp_{K^+, y}$ & $ \gerp_{K^\ast, 1} \gerp_{K^\ast, y^2}$ & $\gerp_{K^{\ast +},1}$ & $\gerp_{K, 1}^2\gerp_{K, y}$ & $1$ & $1$ & $\times$\\\hline

iii & $\{1\}$ & $\langle xy\rangle$ & $\gerp_{N, 1}\gerp_{N, y}\gerp_{N, y^2}\gerp_{N, y^3}$ & $\gerp_{K, 1}\gerp_{K, y^2}$ & $\gerp_{K^+, 1}$& $\gerp_{K^{\ast}, 1}\gerp_{K^{\ast}, y}\gerp_{K^{\ast}, y^3}$& $\gerp_{K^{\ast+}, 1}\gerp_{K^{\ast+}, y}$&  $p$ &$2$ & $0$ & $\surd$\\\hline

iv & $\{1\}$ & $\langle  xy^2\rangle$ & $\gerp_{N, 1}\gerp_{N, y}\gerp_{N, y^2}\gerp_{N, y^3}$& $\gerp_{K, 1}\gerp_{K, y}\gerp_{K, y^3}$& $\gerp_{K^+, 1}\gerp_{K^+, y}$ &  $\gerp_{K^\ast, 1}\gerp_{K^\ast, y}$& $\gerp_{K^{\ast+},1}$ &  $\gerp_{K, 1}\gerp_{K, y^3}^2$ & $1$ & $1$ & $\times$\\\hline

v & $\{1\}$ & $\langle xy^3\rangle$ & $\gerp_{N, 1}\gerp_{N, y}\gerp_{N, y^2}\gerp_{N, y^3}$ & $\gerp_{K, 1}\gerp_{K, y^2}$ &$ \gerp_{K^+, 1}$& $\gerp_{K^\ast, 1}\gerp_{K^\ast, y}\gerp_{K^\ast, y^2} $& $\gerp_{K^{\ast +}, 1}\gerp_{K^{\ast +}, y}$&  $\gerp_{K, 1}^2$&$0$ & $2$ & $\times$\\\hline

vi&$\{1\}$ & $\langle y^2\rangle$ & $\gerp_{N, 1}\gerp_{N, x}\gerp_{N, y}\gerp_{N, xy}$ & $\gerp_{K, 1}\gerp_{K, y}$ & $\gerp_{K^+, 1}\gerp_{K^+, y}$&  $\gerp_{K^\ast, 1}\gerp_{K^\ast, y}$ &  $\gerp_{K^{\ast+}, 1}\gerp_{K^{\ast+}, y}$   &  $p$ & $2$ & $0$ & $\surd$\\\hline

vii & $\{1\}$ & $\langle  y\rangle$ & $\gerp_{N, 1}\gerp_{N, x}$ & $\gerp_{K, 1}$ & $\gerp_{K^+, 1}$& $\gerp_{K^\ast, 1}$ & $\gerp_{K^{\ast+}, 1}$ & $p^2$ & $1$ & $0$ & $\times$\\\hline

\hline

viii &$\langle y^2 \rangle $ &$\langle y^2 \rangle $& $\gerp_{N, 1}^2\gerp_{N, x}^2\gerp_{N, y}^2\gerp_{N, xy}^2$ & $\gerp_{K, 1}^2\gerp_{K, y}^2$ & $\gerp_{K^+, 1}\gerp_{K^+, y}$ & $\gerp_{K^\ast, 1}^2\gerp_{K^\ast, y}^2$ &$\gerp_{K^{\ast+}, 1}\gerp_{K^{\ast+}, y} $ & $\gerp_{K, 1}\gerp_{K, y}$&$2$ & $0$ & $\surd$\\\hline

ix &$\langle y^2 \rangle $ &$\langle y \rangle $& $\gerp_{N, 1}^2 \gerp_{N, x}^2$ & $\gerp_{K, 1}^2$ & $\gerp_{K^+, 1}$ & $\gerp_{K^\ast, 1}^2$& $\gerp_{K^{\ast+}, 1}$& $p$ &$2$ & $0$ & $\surd$\\\hline

x &$\langle y^2 \rangle $  &$\langle x, y^2 \rangle $&$\gerp_{N, 1}^2\gerp_{N, y}^2$ & $\gerp_{K, 1}^2 \gerp_{K, y}^2$ & $\gerp_{K^+, 1} \gerp_{K^+, y}$ & $\gerp_{K^\ast, 1}^2$ & $\gerp_{K^{\ast+}, 1}$ & $p$ &$2$ & $0$ & $\surd$\\\hline

xi &$\langle y^2 \rangle $ &$\langle xy, y^2 \rangle $ &$\gerp_{N, 1}^2\gerp_{N, y}^2$ & $\gerp_{K, 1}^2$ & $\gerp_{K^+, 1}$ & $\gerp_{K^\ast, 1}^2\gerp_{K^\ast, y}^2$ & $\gerp_{K^{\ast+}, 1}\gerp_{K^{\ast+}, y}$ &$p$ & $2$ & $0$ & $\surd$\\\hline

xii &$\langle x \rangle$ & $\langle x \rangle$ &$\gerp_{N, 1}^2\gerp_{N, y}^2\gerp_{N, y^2}^2\gerp_{N, y^3}^2$ &$\gerp_{K, 1}\gerp_{K, y}^2\gerp_{K, y^2}$ & $\gerp_{K^+, 1}\gerp_{K^+, y}$ & $\gerp_{K^\ast, 1}^2\gerp_{K^\ast, y^2}^2$ & $\gerp_{K^{\ast+}, 1}^2$ & $\gerp_{K, 1}\gerp_{K, y}$ &$1$ & $1$ & $\times$\\\hline

xiii&$\langle x \rangle$ & $\langle x, y^2 \rangle$ & $\gerp_{N, 1}^2\gerp_{N, y}^2$ & $\gerp_{K, 1}\gerp_{K, y}^2$& $\gerp_{K^+, 1}\gerp_{K^+, y}$ & $\gerp_{K^\ast, 1}^2 $& $\gerp_{K^{\ast+}, 1}^2$& $p$ &$2$ & $0$ & $\surd$\\\hline

xiv& $\langle xy^2 \rangle$ & $\langle x y^2 \rangle$ & $\gerp_{N, 1}^2\gerp_{N, y}^2\gerp_{N, y^2}^2\gerp_{N, y^3}^2$ & $\gerp_{K, 1}^2\gerp_{K, y}\gerp_{K, y^3}$&$\gerp_{K^+, 1}\gerp_{K^+, y}$ & $\gerp_{K^\ast, 1}^2\gerp_{K^\ast, y}^2$ & $\gerp_{K^{\ast+}, 1}^2$ & $\gerp_{K, 1}\gerp_{K, y^3} \triangle $ & $1$ & $1$ &$\times$\\\hline

xv& $\langle xy^2 \rangle$ & $\langle x, y^2 \rangle$ & $\gerp_{N, 1}^2\gerp_{N, y}^2$ & $\gerp_{K, 1}^2\gerp_{K, y}$& $\gerp_{K^+, 1}\gerp_{K^+, y}$ & $\gerp_{K^\ast, 1}^2$& $\gerp_{K^{\ast+}, 1}^2$& $p$ & $2$ & $0$ & $\surd$\\\hline

xvi&$\langle xy \rangle$ &$\langle xy \rangle$& $\gerp_{N, 1}^2\gerp_{N, y}^2\gerp_{N, y^2}^2\gerp_{N, y^3}^2$ & $\gerp_{K, 1}^2 \gerp_{K, y^3}^2$ & $\gerp_{K^+, 1}^2$ & $\gerp_{K^\ast, 1}^2\gerp_{K^\ast, y}\gerp_{K^\ast, y^3}$ & $\gerp_{K^{\ast+}, 1}\gerp_{K^{\ast+}, y}$ & $\gerp_{K, 1}\gerp_{K, y^3}\triangle$  & $2$ & $0$ & $\surd$\\\hline

xvii&$\langle xy \rangle$& $\langle xy, y^2 \rangle$& $\gerp_{N, 1}^2\gerp_{N, y}^2$ & $\gerp_{K, 1}^2$ & $\gerp_{K^+, 1}^2$ & $\gerp_{K^\ast, 1}^2\gerp_{K\ast, y}$ & $\gerp_{K^{\ast+}, 1}\gerp_{K^{\ast+}, y}$ & $p$ & $2$ & $0$ & $\surd$\\\hline

xviii&$\langle xy^3 \rangle$ &$\langle xy^3 \rangle$& $\gerp_{N,1}^2\gerp_{N,y}^2\gerp_{N,y^2}^2\gerp_{N,y^3}^2$ & $\gerp_{K,1}^2\gerp_{K,y}^2$ & $\gerp_{K^+,1}^2$ & $\gerp_{K^\ast,1}\gerp_{K^\ast,y}^2\gerp_{K^\ast,y^2}$ & $\gerp_{K^{\ast+},1}\gerp_{K^{\ast+},y}$ & $\gerp_{K, 1}^2\triangle$ & $2$& $0$ & $\surd$\\\hline

xix&$\langle xy \rangle$& $\langle xy, y^2 \rangle$& $\gerp_{N, 1}^2\gerp_{N, y}^2$ & $\gerp_{K, 1}^2$ & $\gerp_{K^+, 1}^2$ & $\gerp_{K^\ast, 1}^2\gerp_{K^\ast, y}$ & $\gerp_{K^{\ast+}, 1}\gerp_{K^{\ast+}, y}$ & $p$ & $2$& $0$ & $\surd$\\\hline

xx&$\langle y \rangle$& $\langle y\rangle$& $\gerp_{N, 1}^4\gerp_{N, x}^4$ & $\gerp_{K, 1}^4$ & $\gerp_{K^+, 1}^2$ & $\gerp_{K^\ast, 1}^4$ & $\gerp_{K^{\ast+}, 1}^2$ & $\gerp_{K, 1}^2$ & $2$& $0$ & $\surd$\\\hline

xxi&$\langle y \rangle$& $G$& $\gerp_{N, 1}^4$ & $\gerp_{K, 1}^4$ & $\gerp_{K^+, 1}^2$ & $\gerp_{K^\ast, 1}^4$ & $\gerp_{K^{\ast+}, 1}^2$ & $p$ & $2$& $0$ & $\surd$\\\hline

xxii&$\langle x, y^2 \rangle$&$\langle x, y^2 \rangle$&$\gerp_{N, 1}^4\gerp_{N, y}^4$ & $\gerp_{K, 1}^2\gerp_{K, y}^2$ & $\gerp_{K^+, 1}\gerp_{K^+, y}$ & $\gerp_{K^\ast, 1}^4$ & $\gerp_{K^{\ast+}, 1}^2$ & $\gerp_{K, 1}\gerp_{K, y}$ & $2$& $0$ & $\surd$\\\hline

xxiii&$\langle x, y^2 \rangle$&$G$& $\gerp_{N, 1}^4$ & $\gerp_{K, 1}^2$ & $\gerp_{K^+, 1}$ & $\gerp_{K^\ast, 1}^4$ & $\gerp_{K^{\ast+}, 1}^2$ & $p$ & $2$& $0$ & $\surd$\\\hline

xxiv&$\langle xy, y^2 \rangle$&$\langle xy, y^2 \rangle$&$\gerp_{N, 1}^4\gerp_{N, y}^4$ & $\gerp_{K, 1}^4$ & $\gerp_{K^+, 1}^2$ & $\gerp_{K^\ast, 1}^2\gerp_{K^\ast, y}^2$ & $\gerp_{K^{\ast+}, 1}\gerp_{K^{\ast+}, y}$ & $\gerp_{K, 1}^2$ & $2$& $0$ & $\surd$\\\hline

xxv&$\langle xy, y^2 \rangle$&$G$& $\gerp_{N, 1}^4$ & $\gerp_{K, 1}^4$ & $\gerp_{K^+, 1}^2$ & $\gerp_{K^\ast, 1}^2$ & $\gerp_{K^{\ast+}, 1}$ & $p$ & $2$& $0$ & $\surd$\\\hline

xxvi& $G$ & $G$ & $\gerp_{N, 1}^4$ & $\gerp_{K, 1}^4$ & $\gerp_{K^+, 1}^2$ & $\gerp_{K^\ast, 1}^4$ & $\gerp_{K^{\ast+}, 1}^2$ & $p$ & $2$& $0$ & $\surd$\\\hline\hline
\end{tabular}
\end{table}
\end{center}
}
\end{landscape}

\subsection{Examples} Take a curve $C$ of genus $2$ over $\QQ$ (to simplify). Given a prime $p$ at which $C$ has good reduction $\bar C$, one has a simple method of writing down the Hasse Witt matrix $M$ of $\bar A = Jac(\bar C)$ and so deciding the $a$ number and $f$ number of $\bar A$: The $f$ number is the rank of $M^{(p)}M$ and the $a$-number is the co-rank of $M$. In general it is hard to decide the reduction type by examining $M$, but in certain cases we can do that and compare our results with the results above when $A = \Jac(C)$ has complex multiplication.

Let $C: y^2 = f(x)$, where $f(x) = x^5 + a_4x^4 + \dots + a_0$ be a hyperelliptic curve and write
\[ f(x)^{(p-1)/2} = \sum_{j \geq 0} c_j x^j.\]
Then the Hasse-Witt matrix $M$ is given by
\[ \begin{pmatrix} c_{p-1} & c_{p-2} \\ c_{2p-1} & c_{2p-2}
\end{pmatrix},\]
and $M^{(p)}$ is
\[ \begin{pmatrix} c_{p-1}^p & c_{p-2}^p \\ c_{2p-1}^p & c_{2p-2}^p
\end{pmatrix}  .\]
Exactly the same recipe works if $f(x)$ is a sextic. See \cite[p. 129]{IKO}
\subsubsection{} Let $C: y^2 = x^5 + 1$. The curve has good reduction outside $2\cdot 5$. The Jacobian has complex multiplication by $\QQ(\zeta_5)$ and the automorphism group of the curve in characteristic zero is $\mu_{10}$. The coefficient of $x^n$ in $f(x)^{(p-1)/2} $ is $0$ if $5\nmid n$ and, for $n$ not larger than $5(p-1)/2$  such that $5 \vert n$, is $\binom{(p-1)/2}{n/5}$. We divide the analysis to several cases:
\begin{itemize}
\item If $p \equiv 1 \pmod{5}$, $M =  \left(\begin{smallmatrix} \binom{(p-1)/2}{(p-1)/5} & 0 \\ 0 & \binom{(p-1)/2}{(2p-2)/5}
\end{smallmatrix} \right)$ has rank $2$ and we conclude that $\bar A$ is ordinary. Note that $p$ splits completely in this case. Namely we are in case i of the cyclic Galois case.
\item If $p \equiv 2 \pmod{5}, p> 2$, $M =  \left(\begin{smallmatrix} 0 & \binom{(p-1)/2}{(p-2)/5} \\ 0 & 0\end{smallmatrix} \right)$ has rank $1$ and $M^{(p)} M = 0$. Thus, $f = 0$ and $a = 1$. This is a supersingular, but not superspecial reduction, in accordance to case iii.
\item If $p \equiv 3 \pmod{5}$, $M = \left(\begin{smallmatrix} 0 & 0 \\ \binom{(p-1)/2}{(2p-1)/5} & 0\end{smallmatrix} \right)$ has rank $1$ and $M^{(p)} M = 0$. Thus, $f = 0$ and $a = 1$. This is a supersingular, but not superspecial reduction, in accordance to case iii again.
\item If $p \equiv -1 \pmod{5}$, $M =  \left(\begin{smallmatrix} 0 & 0 \\ 0 & 0\end{smallmatrix} \right)$ has rank $0$ and we have superspecial reduction, in accordance with case ii.
\item $p=5$.  It follows from Igusa's classification of genus 2 curves with many automorphisms \cite[\S 8]{IgusaArithmeticModuli} that the reduction of a stable model of $y^2 = x^5 + 1$ modulo 5 is isomorphic, possibly after base change, to the curve $y^2 = f(x)$, where $f(x) = x(x-1)(x+1)(x-2)(x+2)$. That is, since the characteristic is $5$,  $f(x) = x^5 - x$. Then $f(x)^2 = x^{10} -2x^6 + x^2$ and the Hasse-Witt matrix is the zero matrix, giving us superspecial reduction. This agrees with case v.
\item In characteristic 2, Igusa's classification gives us the model $y^2 - y = x^5$. According to our table, since we are in case iii, this curve should be supersingular, but not superspecial. The fact that the curve is supersingular, which in genus $2$ is equivalent to $f = 0$, follows from the theory of Artin-Schreier coverings, c.f. \cite[Lemma 2.6]{PZ}. According to \cite[Theorem 3.3]{IKO} there are no superspecial non-singular curves of genus 2 in characteristic 2. Therefore, we have supersingular and not superspecial reduction. 
\end{itemize}

\subsubsection{} Consider the curve $y^2 = -8x^6 - 64x^5 + 1120x^4 + 4760x^3 - 48400x^2 + 22627x - 91839$, which has complex multiplication by the ring of integers of $K = \QQ(\sqrt{-65 + 26\sqrt{5}})$ by \cite{vanW}. The field is a cyclic Galois extension with a totally real field $K^+ = \QQ(\sqrt{5})$. Its discriminant is $5^3\cdot 13^2$.
The prime $5$ decomposes as $\gerp_{K^+}^2 = \gerp_{K}^4$ and belongs to case vi, the prime $13$ decomposes as $\gerq_{K^+}=\gerq_{K}^2$ and belongs to case v. In any case, we have superspecial reduction. And,
indeed, in both cases one finds that the Hasse-Witt matrix is identically zero modulo the corresponding prime.
For example, for $p=5$ we have $f(x)^2 = 64 x^{12} + 1024 x^{11} - 13824 x^{10} - 219520 x^9 + 1419520 x^8 + 16495568 x^7 - 87185232 x^6 - 398328128 x^5 + 2352249680 x^4 - 3064600880 x^3 + 9401996329 x^2 - 4156082106 x + 8434401921$ and the Hasse-Witt matrix is 
$\left(\begin{smallmatrix}2352249680&-3064600880\\-219520&1419520\end{smallmatrix} \right)\equiv 0 \pmod{5}$.

%%%%%%%%%%%%%%%%%%%%%%%%%%%%%%%%%%%%%%

\subsubsection{Cases (v) and (vi) in Table 3.3.1 for Galois cyclic fields}
Examples 1 and 2 below demonstrate cases (v) and (vi) in the table for Galois cyclic fields.
For both, we take the Galois cyclic field $K=\QQ[x]/(x^4 + 238x^2 + 833)$, with real quadratic subfield $\QQ(\sqrt{17})$.
It can be constructed by adjoining $\sqrt{-119+28\sqrt{17}}$ to $\QQ$.
The class number of $K$ is $2$ and the field discriminant is $7^2 17^3$.

The three Igusa Class polynomials are:
%$h_1(X) = X^2 + 40171422156071443574795762356687740855117143022385274956910568298685587701/42671310320319457177005073762527215095808432941134063337472X - 30023514896059779723848033240022002148186842644072550349810335933301603093074151/2510118493937631908708734961543782164403383163545740708795121664$, 

%h2(X) = X^2 + 171098726928512104623571626137656411682029905644385/2163395756452421917506968508008869376x -67833233133158201243320325314053044374793892667390388025/4545024059836981895942452464263133450304, 

%h3(X) = X^2 + 294137883142822021219298085203730741450884040511641/8653583025809687670027874032035477504x - 957855843508618912597351540235633599904192776661411386119/72720384957391710335079239428210135204864

%In factored form:
{\tiny{
\begin{multline*}
h_1(x) = x^2 + \frac{3^{16} \cdot 11 \cdot 163 \cdot 4801 \cdot 712465984819 \cdot 152160175753014902257305649143422239021984895543}{
2^{23}\cdot7^{6}\cdot 43^{12}\cdot 179^{12}}x  \\  -\frac{3^{30} \cdot 62273^5 \cdot 173166943^5}{2^{22}\cdot 7^{12}\cdot 43^{12}\cdot 179^{12}}
\end{multline*}}}

{\tiny{
\begin{multline*}h_2(x) = x^2 + \frac{3^{11} \cdot 5 \cdot 967 \cdot 199763665249568296384949088855973069605073}{2^{9} \cdot 7^{3} \cdot 43^{8} \cdot 179^{8}}x 
-\frac{3^{22} \cdot 5^2 \cdot 19^2 \cdot 191 \cdot 62273^3 \cdot 173166943^3}{ 2^{6}\cdot 7^{8}\cdot 43^{8}\cdot 179^{8}}\end{multline*}}}

{\tiny{
\begin{multline*}h_3(x) = x^2 + \frac{3^9 \cdot 1823 \cdot 8197340996395223625771218888046149724668749}{ 2^{11}\cdot 7^{3}\cdot 43^{8}\cdot 179^{8}} x \\
- \frac{3^{18} \cdot 359 \cdot 1667 \cdot 1811 \cdot 2281229974265082675220366841972155717537}{ 2^{10}\cdot 7^{8}\cdot 43^{8}\cdot 179^{8}}\end{multline*}}}

{\bf Example 1} (Case v) The prime $7$ decomposes in $K$ as the square of an inert prime with inertia degree $2$. Modulo $7$ the class polynomials reduce badly, since $7$ is in the denominator.
The two CM curves each reduce to a product of elliptic curves with product polarization modulo $7$, and the Galois action takes one curve to the other. Both have superspecial reduction.

{\bf Example 2} (Case vi) The prime $17$ is totally ramified in K.
Modulo $17$ the reduction of the Igusa class polynomials is: 
\[h_1(x) = (x + 13)^2 \pmod{17}, \quad h_2(x) = (x + 12)^2 \pmod{17}, \quad h_3(x) = (x + 2)^2 \pmod{17}.\]

Taking the absolute Igusa invariants $[i_1,i_2,i_3]=[-13, -12, -2]$ modulo $17$, we recover a $4$-tuple of Igusa-Clebsch invariants $[I_2,I_4,I_6,I_{10}] = [1,14,8,13]$ via the formulas:
$I_2 = 1$, $I_{10} = I_2^5/i_1$, $I_4 = i_2 \cdot I_{10}/I_2^3$, $I_6 = i_3 \cdot I_{10}/I_2^2$.
Using Magma's implementation of Mestre's algorithm, we obtain a genus $2$ curve $C: y^2 = x^6 + 16$ with these invariants over $\FF_{17}$.
Taking $f(x) = x^6 + 16 \pmod{17}$, we compute the $(p-1)/2 = 8^{th}$ power and compute the Hasse-Witt matrix.
The only non-zero coefficients of $f$ are for terms whose degree is $0 \pmod{6}$, so the Hasse-Witt Matrix is zero and the reduction is superspecial.

%********************************************************

\subsubsection{Cases (xii), (xiv), (xvii) and (xix) in Table 3.5.1 for non-Galois fields}
In Examples 3 and 4 below we deal with cases (xii) and (xiv) (Example 4) and cases (xvii) and (xix) (Example 3) 
in the table for non-Galois fields.
We work with a non-Galois quartic CM field, given by $K=\QQ[x]/(x^4 + 134x^2 + 89)$
with reflex field given by $K^*=\QQ[x]/(x^4 + 268x^2 + 17600)$.
The class number of $K$ is $4$ and the discriminant is $2^4  11^2  89$.

For typographical reasons we list the class polynomials in modified form. To get the class polynomials $h_i(x)$ from the polynomials $h_i^\ast(x)$ listed below, divide by the leading coefficient in each case.

{\begin{landscape}
{\tiny{
\begin{multline*}
h_1^\ast = 467861685008274198315825008595700654800896648612454642253063065763346063674621433392530889250338077545166015625\cdot x^8 + \\ 
555449149845517528201830854630774702288460206836540032347806689557044680668121067380364116957025544252246618270874023437500000 \cdot x^7 + \\
184033686764733003916214393323122175930726657165358821209777937427864516170252466041678857284459287923047251104058697819709777832031250000000 \cdot x^6 - \\
18532528196713610966248735059989496921744294218655209931046129134295796866360219594850246272816241093321185745310256539534492913064849853515625000000000 \cdot x^5 - \\
149517615773862216077075501785526135664390163794144774072964515539112873485\\177946865799467841097175021951747554258225537368778725711443911431489057223000000000000 \cdot x^4 - \\
274500212787786320363174922451987418656288895564254168526715333725750375585\\56384959106460164764658831329000379776543259072657572814515177392240269322428312127339426217984 \cdot x^3 - \\
1297531069082446204942804872389223658522300816123923235450253734042421899655\\805930017719515089898192145168479582847645622244801024566788907131236811092595248135449429095219200000 \cdot x^2 + \\
75816198120430164253210000030177809833640567679756337667150872417366816696541\\96164395918854519565530006696018114342720043906982109112415240533721325054782428254517780807680000000000 \cdot x- \\
166561076259218874524380391618627812459200629952377728540602961024102700278352\\475504124640248501826031024603695578842862255022395446214265265991340473323825199368431179137024000000000000000
\end{multline*}

\begin{multline*}
h_2^\ast = 122620993224533990854266979572168589900407195091247558593750000 \cdot x^8 + \\
7485929269991071436519019319213472872675919432653818688502883911132812500000 \cdot x^7 +\\
127911590573429429764061252422626647909635036233546648623604176763112582318377685546875000 \cdot x^6 - \\
432801469302398970120563934143486307948625635434432325277226168869895543943151085803437889746093750 \cdot x^5 - \\
70989757220371345897539040783507004210240989969604889311893737913941059181926255773255664903749716042965625 \cdot x^4 + \\
141214583953749258746190038912978215937828708913783023311482635400978729488802928890913822935905126587220991510912 \cdot x^3 - \\
324730974425347314917488050857215655038539099494418111993188797560893578833446457406316467999877717129727990440513280000 \cdot x^2 + \\
2878258800484146973496313274835799307245769049641717521354166360884626643674126222273800205511215767305294130902374400000000 \cdot x - \\
8757766750510816031715743862941216509133894670889936799087401365869157766568945511079523707916023330470602373237659904000000000000
\end{multline*}}}
\end{landscape}}

{\begin{landscape}
{\tiny{
\begin{multline*}
h_3^\ast = 31390974265480701658692346770475159014504241943359375000000000000 \cdot x^8 + \\
493348323893392512322187882201836480657190909721221154566235351562500000000000 \cdot x^7 + \\
2168443965418989986038492688067403045710941961035989372240912887706777245531152343750000000 \cdot x^6 - \\
2302525585957788818152082352653829396337430793844914883168947610481921539535736987830563608984375000 \cdot x^5 - \\
152380762091374020434799837277117715974184875809865052975561585447684346918113356183254740900302324932628125 \cdot x^4 + \\
101261095338271190490530687171870069034863165796195122032131006101920226887769776012517741443429566675432329475648 \cdot x^3 - \\
82394230890068050809147635660557623685629965618893227966125666811860080407429030642077770538444660191227910486571712000 \cdot x^2 - \\
1926409131567661484196961498816000531174411060031335222220813078857194020390371535396618981891327471335621587834624000000 \cdot x - \\
1870374669751414608923737345184889994628232369194056109733545677638411383291159282002508930826987969131561815775577216000000000
\end{multline*}}}
\end{landscape}
}
{\bf Example 3} (cases xvii, xix) The prime decomposition of $11$ in $K$ is such that it is ramified in $K^+$ and the prime above it in $K^+$ is inert in $K$.  
%idealprimedec(K, 11) = [[11, [-46, -20, 20, 0]~, 2, 2, [-2, 2, -2, 0]~]]
Further, $11$ is split in ${K^*}^+$, and mixed in $K^*$ (one degree-one prime ideal with ramification index $2$, and one unramified prime ideal of degree $2$).
%idealprimedec(L, 11) = [[11, [0, 2]~, 1, 1, [2, 2]~], [11, [2, 2]~, 1, 1, [0, 2]~]]
%idealprimedec(K1, 11) = [[11, [0, 0, 2, 2]~, 2, 1, [0, 3, -2, -2]~], [11, [-132, 0, 4, -4]~, 1, 2, [-4, 0, 4, -4]~]]
The prime $11$ appears in the denominator, so at least one of the curves with CM by $K$ is superspecial.

{\bf Example 4} (cases xii and xiv) The prime decomposition of $89$ in $K$ is mixed: one ramified prime of degree $1$ and two unramified primes of degree $1$. It is  split in $K^+$, ramified in $K^{\ast, +}$, and that prime in $K^{\ast, +}$ then splits in $K^\ast$.
Modulo $89$ the class polynomials factor as a product of the squares of two degree-$2$ polynomials:
$$h_1= (x^2 + 17x + 9)^2(x^2 + 18x + 25)^2 \pmod{89}$$
$$h_2=(x^2 + 37x + 67)^2(x^2 + 69x + 57)^2 \pmod{89}$$
$$h_3=(x^2 + 83x + 83)^2(x^2 + 85x + 45)^2 \pmod{89}.$$

Note that in this case, it is not obvious from the polynomials how to match up roots of the three 
polynomials to form triples of Igusa invariants.  A common approach has been to use the knowledge of the CM field to determine the possible group orders of the Jacobian of the curve, and then to run through all possible triples of roots of these polynomials until the correct triples and the corresponding curves are found.  In the case that the prime $p$ splits completely in the field $K$ (case (i) in Table 3.5.1), a method for determining the possible group orders was given in~\cite{Weng} and~\cite[Proposition 4]{EL}, and the resulting CM curves constructed there were indeed ordinary.  For other possible decompositions of the prime $p$ in $K$, alternative algorithms are needed to compute the possible group orders. In the case of $p$-rank $1$, a solution was given in~\cite{HMNS}. In some of the other examples, we show how to determine the group orders for other cases below.

The possible group orders in the case for Example 3 are $\#J(C)(\FF_{89^2}) = 62045284$ or $63439556$, for a genus 2 curve $C$ over $\FF_{89^2}$ with CM by $K$.  
This can be seen as follows: let $p = \gerp_1\gerp_2\gerp_3^2$.  In this case it can be verified using Magma or pari that
both of the ideals $\gerp_1\gerp_3$ and $\gerp_2\gerp_3$ are principal, generated by $\pi$ and $\overline{\pi}$, and 
$\pi\overline{\pi}=p$.  As in the algorithm explained in~\cite{HMNS}, we find the Weil $p^2$-numbers 
$\beta = \pm \pi\overline{\pi}^{-1}p$.  
Then the corresponding group orders for these Weil $p^2$-numbers are $N=\prod_{\sigma} (1-\beta^{\sigma})$, where $\sigma$ ranges over the complex embeddings of $K$.

Represent $\FF_{89^2} = \FF_{89}[\alpha]$, where $\alpha$ satisfies $\alpha^2 + 82\alpha + 3=0$.
The four curves are 

$$y^2 = f_1(x) =
\alpha^{5245}x^6 + \alpha^{2244}x^5 + \alpha^{7129}x^4 + \alpha^{1567}x^3 + \alpha^{2060}x^2 + \alpha^{5783}x + \alpha^{3905} $$

$$ y^2 = f_2(x) =
\alpha^{2667}x^6 + \alpha^{795}x^5 + \alpha^{1956}x^4 + \alpha^{5619}x^3 + \alpha^{5331}x^2 + \alpha^{7272}x + 52 $$

$$y^2 = f_3(x) =
\alpha^{6464}x^6 + \alpha^{795}x^5 + \alpha^{4574}x^4 + \alpha^{2946}x^3 + \alpha^{1544}x^2 + \alpha^{6684}x + \alpha^{803} $$ 

$$y^2 = f_4(x) =
\alpha^{132}x^6 + \alpha^{3403}x^5 + \alpha^{2326}x^4 + \alpha^{3493}x^3 + \alpha^{5184}x^2 + \alpha^{1943}x + \alpha^{4418}$$

Calculating the Hasse-Witt matrix for the first curve, one computes $f_1^{44}$ and finds
$c_{88}= \alpha^{7555}$,
$c_{87} = \alpha^{7787}$,
$c_{177} = \alpha^{950}$,
$c_{176} = \alpha^{1182}$, and that both $M$ and $M^{(p)}M$ have rank $1$, so both the $f$-number and the $a$-number equal $1$.
The same is true for the other three curves as well.

\subsubsection{Cases (ii) and (iv) in Table 3.5.1 for non-Galois fields} 

We still refer to the field $K=\QQ[x]/(x^4 + 134x^2 + 89)$ and the class polynomials given above.

{\bf Example 5}
To give an example for cases (ii) and (iv) in Table 3.5.1 for non-Galois fields, we let $p=313$.
The prime $p=313$ decomposes in $K$ as the product of two prime ideals of degree $1$ and one prime ideal
with residue degree $2$.
Modulo $313$, the class polynomials factor as a product of four degree-two polynomials:
\begin{equation*}
\begin{split}
h_1(x) & = (x^2 + 25 x + 273)(x^2 + 137 x + 39)(x^2 + 200x + 108)(x^2 + 312x + 249) \pmod{313},\\
h_2(x) & = (x^2 + 20 x + 121)(x^2 + 90x + 119)(x^2 + 138x + 297)(x^2 + 173x + 78) \pmod{313},\\
h_3(x) & = (x^2 + 105x+ 276)(x^2 + 133x + 230)(x^2 + 232x+183)(x^2 + 289x + 91) \pmod{313}.
\end{split}
\end{equation*}
The two possible group orders are $\#J(C)(\FF_{89^2}) = 9607909136$ or $9588315136$, for a genus 2 curve $C$
over $\FF_{313^2}$ with CM by $K$.  This can be seen because both of the prime ideals of $K$ of degree $1$ lying above $p$
are principal, and letting $\pi$ and $\overline{\pi}$ be the generators, we find the Weil $p^2$-numbers 
$\beta = \pm \pi\overline{\pi}^{-1}p$ (this is also explained in~\cite{HMNS}).  
Then the corresponding group orders for these Weil $p^2$-numbers are $N=\prod_{\sigma} (1-\beta^{\sigma})$, where
$\sigma$ ranges over the complex embeddings of $K$.
Represent $\FF_{313^2} = \FF_{313}[\alpha]$, where $\alpha$ satisfies $\alpha^2 + 310\alpha + 10=0$.
We find eight curves defined over $\FF_{313^2}$.
For example, the first one is the hyperelliptic curve defined over $\FF_{313^2}$ by 
$$y^2 = f(x) = \alpha^{20046}x^6 + \alpha^{18815}x^5 +
    \alpha^{77496}x^4 + \alpha^{26504}x^3 + \alpha^{19266}x^2 + \alpha^{53721}x + \alpha^{1332}.$$
Calculating $f(x)^{156}$, one finds that the coefficients of the Hasse-Witt matrix $M$ are:
$c_{p-1} = \alpha^{91834}$,
$c_{p-2} = \alpha^{18900}$,
$c_{2p-1} = \alpha^{62990}$,
$c_{2p-2} = \alpha^{88024}.$
The determinant of both $M$ and $M^{(p)}M$ is $0$ and the rank is $1$.  
The same is true for all 8 curves: they all have $a=1$ and $f=1$.
%Please note that if checking these results with Magma, regenerate the extension field and the curves to ensure internal consistency with the choice of $\alpha$.
\

\subsubsection{Cases (iii) and (v) in Table 3.5.1}
This next set of cases is very interesting, because we can see here that the decomposition of the prime in $K$ only determines the reduction of the abelian surface in combination with the CM type.  
This is the first time we have an example of both superspecial and ordinary reduction modulo the same prime (of CM abelian surfaces with CM by the same field $K$, but different CM type). This phenomenon does not occur in genus $1$.

We again work with the primitive quartic CM field $K=\QQ[x]/(x^4 + 134x^2 + 89)$ and the class polynomials given above.
Let $p=47$. As in cases (iii) and (v) in Table 3.5.1, the prime $p=47$ decomposes in $K$ as a product of two prime ideals of degree $2$: $p$ is inert in $K^+$, the real quadratic subfield of $K$, and then splits in $K$.
The class polynomials factor modulo $47$ as 
\begin{equation*}
\begin{split}
h_1(x) & = (x^2 + 18)^2(x^2 + 22 x + 12)(x^2 + 33x + 19)(x^2 + 37x + 6) \pmod{47},\\
h_2(x) & = (x^2 + 23)^2(x^2 + 10x + 46)(x^2 + 6x + 17)(x^2 + 9x + 39) \pmod{47},\\
h_3(x) & = (x^2 + 2)^2(x^2 + 42x + 26)(x^2 + x + 19)(x^2 + 27x + 7) \pmod{47}.
\end{split}
\end{equation*}

{\bf Example 6 (case (v))}
Both degree-$2$ prime ideals lying over $p=47$ are principal in this case, and we denote the generators by 
$\pi$ and $\overline{\pi}$.  In this case, $\pi\overline{\pi} = 47u$, where $u$ is a unit.  Setting $\beta = \pm p^2/u$,
gives two possible Weil $p^2$-numbers.  The two possible group orders are $\#J(C)(\FF_{47^2}) = \prod_{\sigma} (1-\beta^{\sigma}) = 4901092$ or $4865732$, where $\sigma$ ranges over the complex embeddings of $K$.
There are 4 ordinary CM points corresponding to these possible group orders.  

Represent $\FF_{47^2} = \FF_{47}[\alpha]$, where $\alpha$ satisfies $\alpha^2 + 45\alpha + 5=0$.
Then the four curves with these group orders are:
$$ y^2 = \alpha^{829}x^6 + \alpha^{1842}x^5 + \alpha^{622}x^4+ \alpha^{1262}x^3 + \alpha^{956}x^2 + \alpha^{398}x + \alpha^{1255} $$
$$ y^2 = \alpha^{929}x^6 + \alpha^{1219}x^5 + \alpha^{1483}x^4 + \alpha^{1511}x^3 + \alpha^{251}x^2 + \alpha^{224}x + \alpha^{1437} $$ 
$$ y^2 = \alpha^{1852}x^6 + \alpha^{2038}x^5 + \alpha^{1790}x^4 + \alpha^{1078}x^3 + \alpha^{1166}x^2 + \alpha^{1634}x + \alpha^{1518} $$
$$ y^2 = \alpha^{1783}x^6 + \alpha^{892}x^5 + \alpha^{1454}x^4 + \alpha^{665}x^3 + \alpha^{1014}x^2 + \alpha^{871}x + \alpha^{1754}.$$
For all four curves, we checked that the Hasse-Witt matrix $M$ and $M^{(p)}M$ both have rank $2$, so these curves are indeed all ordinary.

{\bf Example 7 (case (iii))}
Each of the three class polynomials has one linear factor modulo $47$.  The curve over $\FF_{47}$
with those $\FF_{47}$-rational invariants is the hyperelliptic curve defined by 
$$y^2 = 40x^6 + 22x^5 + 43x^4 + x^3 +29x^2 +8x + 28.$$
Its Jacobian has $\#J(C)(\FF_{47}) = p^2+2p+1 = 2304$ points and $\#C(\FF_{47}) = p+1 = 48$.
The Hasse-Witt matrix $M$ is identically $0$ modulo $47$, so the curve is superspecial.
This curve occurs ``with multiplicity two" modulo $47$.

The other two CM abelian surfaces reduce to curves defined over $\FF_{47^2}$.  
They are the hyperelliptic curves defined by 
$$y^2 = \alpha^{487}x^6 + \alpha^{977}x^5 + \alpha^{1698}x^4 + \alpha^{1530}x^3 + \alpha^{1790}x^2 + \alpha^{1618}x + \alpha^{1063}$$
$$ y^2 = \alpha^{809}x^6 + \alpha^{1759}x^5 + \alpha^{318}x^4 + \alpha^{1254}x^3 + \alpha^{226}x^2 + \alpha^{974}x + \alpha^{1385}.$$
They both have $\#J(C)(\FF_{47^2}) = p^4 - 2p^2 + 1 = 4875264$ points and $\#C(\FF_{47^2}) = p^2+1 = 2210$.
They both have the property that the Hasse-Witt matrix $M$ is identically $0$ modulo $47$, so the curves are both superspecial.

\

\subsubsection{Case (vii) in Table 3.5.1: totally inert}
We again work with the non-galois quartic CM field $K=\QQ[x]/(x^4 + 134x^2 + 89)$ and the class polynomials given above.
The prime $p=13$ is totally inert in $K$.  Modulo $13$, the class polynomials are:
\begin{equation*}
\begin{split}
h_1(x) & = (x^2 + 2 x + 9)(x^2 + 6x + 1)(x^4 + 8x^3 + 10x^2 + 12) \pmod{13},\\
h_2(x) & = (x^2 + 5 x + 1)(x^2 + 8x + 1)(x^4 + 7x^3 + 6x^2 + 7x + 8) \pmod{13},\\
h_3(x) & = (x^2 + 2)(x^2 + 11)(x^4 + 6x^3 + 4x^2 + 5) \pmod{13}.
\end{split}
\end{equation*}
We look for curves over $\FF_{13^2}$ with $\#J(C)(\FF_{13^2}) = (p^4+2p^2+1) = 28900$.
Represent $\FF_{13^2} = \FF_{13}[\alpha]$, where $\alpha$ satisfies $\alpha^2 + 12\alpha + 2=0$.
We find 4 curves over $\FF_{13^2}$, for example the first one is:
$$y^2 = \alpha^{99}x^6 + \alpha^{47}x^5 + \alpha^{156}x^4 + \alpha^{75}x^3 + \alpha^{27}x^2 + x + \alpha^{148}.$$
Its Hasse-Witt matrix $M$ has rank $1$ and the rank of $M^{(p)}M$ is $0$, so $a=1$ and $f=0$ as predicted in the tables.  
The same is true of the other 3 curves as well.

%%%%%%%%%%%%%mark

%%%%%%%%%%%%%%%%%%%%%%%%%%%%%%%%%%%%%%%%%%%%

\

\

\section{The moduli space of pairs of elliptic curves}
\label{section: The moduli space of pairs of elliptic curves}

Let $N$ be a positive integer. Consider the functor $\BB_N$ on schemes associating to a scheme $S$ the isomorphism class of triples 
\[(E_1, E_2, \gamma),\]
where $\pi_i: E_i \arr S, i=1, 2,$ are elliptic curves over $S$ and $\gamma$ is a full level structure on $E_1[N] \times E_2[N]$, namely, an isomorphism,
\[ \gamma: E_1[N] \times E_2[N] \arr (\ZZ/N\ZZ)^4, \]
which is symplectic relative to the Weil pairing on $E_1 \times E_2$ (obtained as the product of the Weil pairings on each elliptic curve, or, equivalently, associated to the product polarization on $E_1 \times E_2$) and the standard pairing on $(\ZZ/N\ZZ)^4$ given by the matrix $\left(\begin{smallmatrix}  0& 1 & &  \\  -1&0  & &  \\& & 0& 1   \\ & &-1& 0  \\
\end{smallmatrix} \right)$.

An isomorphism $\varphi\colon(E_1, E_2, \gamma) \arr (E_1', E_2', \gamma')$ of two such triples over $S$ is a pair of isomorphisms of $S$ schemes, $\varphi_i: E_i \arr E_i'$, such that $\gamma = \gamma' \circ (\varphi_1 \times \varphi_2)$.

The functor $\BB_N$ is naturally equivalent to the functor parameterizing isomorphism classes of quadruples $(A, \lambda, e, \gamma)$ over $S$, where $(A, \lambda)$ is a principally polarized abelian surface over $S$, $e$ is a non-trivial idempotent, fixed under the $\lambda$-Rosati involution, and $\gamma$ is a symplectic level $N$ structure. Indeed, given a triple $(E_1, E_2, \gamma)$ associate to it $(E_1\times E_2, \lambda_1\times \lambda_2, e, \gamma)$, where $\lambda_i$ are the canonical principal polarizations on $E_i$ and $e$ is the idempotent endomorphism $(x, y) \mapsto x$. The converse construction associates to $A$ the triple $(E_1, E_2, \gamma)$, where $E_1 = \Ker (1 - e), E_2 = \Ker(e)$. It is not hard to verify that these constructions give a natural equivalence between the functors. 

\begin{lem} For $N\geq 3$ the moduli problem is rigid. Namely, any automorphism 
$\varphi$ of a triple $(E_1, E_2, \gamma)$ is the identity.  
\end{lem}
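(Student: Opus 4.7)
The plan is to reduce this to the classical rigidity statement that, for $N\geq 3$, the representation $\Aut(E)\to \Aut(E[N])$ on the $N$-torsion of an elliptic curve is faithful. Given an automorphism $\varphi=(\varphi_1,\varphi_2)$ of $(E_1,E_2,\gamma)$, the compatibility $\gamma\circ(\varphi_1\times\varphi_2)=\gamma$ together with $\gamma$ being an isomorphism of group schemes forces $\varphi_1\times\varphi_2$ to act as the identity on $E_1[N]\times E_2[N]$; projecting to each factor, each $\varphi_i$ fixes $E_i[N]$ pointwise.

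To pass from this pointwise condition on $N$-torsion to $\varphi_i=\Id$, I would first reduce to the case of geometric fibers. The automorphism group scheme $\underline{\Aut}_S(E_i)$ is a closed subscheme of the (quasi-finite, unramified) endomorphism scheme of $E_i/S$, so an $S$-section that coincides with the identity section on every geometric fiber is itself the identity. Hence it suffices to prove the following: over any algebraically closed field $k$, every $\varphi\in\Aut(E/k)$ fixing $E[N]$ pointwise is the identity.

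On a geometric fiber the argument is standard: since $\varphi-1$ kills $\ker[N]$, it factors through $[N]$, giving $\psi\in\End(E)$ with $\varphi=1+N\psi$. Using $\deg(\varphi)=1$, i.e.\ $\varphi\varphi^{\vee}=1$, expansion yields
\[\psi+\psi^{\vee}=-N\psi\psi^{\vee}=-N\deg(\psi).\]
The classical Cauchy--Schwarz bound $|\psi+\psi^{\vee}|\leq 2\sqrt{\deg(\psi)}$ (the discriminant of the characteristic polynomial of $\psi$ is nonnegative) then gives $N\deg(\psi)\leq 2\sqrt{\deg(\psi)}$, so $\deg(\psi)\leq 4/N^2<1$ for $N\geq 3$. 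Hence $\deg(\psi)=0$, $\psi=0$, and $\varphi=1$.

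I do not expect any serious obstacle: the heart of the argument is the degree--trace inequality on geometric fibers, which is well known, and the only point needing a word is the reduction from $S$-sections of $\underline{\Aut}_S(E_i)$ to geometric fibers, which is immediate from the unramifiedness of the automorphism group scheme of an elliptic curve.
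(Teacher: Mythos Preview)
Your proof is correct but takes a different route from the paper. The paper simply observes that an automorphism $\varphi=(\varphi_1,\varphi_2)$ of $(E_1,E_2,\gamma)$ induces an automorphism of the principally polarized abelian surface with level structure $(E_1\times E_2,\lambda_1\times\lambda_2,\gamma)$, and then cites as ``well known'' the rigidity of $(A,\lambda,\gamma)$ for $N\geq 3$ (Serre's lemma for abelian schemes). You instead work factor by factor: from $\gamma\circ(\varphi_1\times\varphi_2)=\gamma$ you deduce that each $\varphi_i$ is trivial on $E_i[N]$, reduce to geometric fibers via unramifiedness of $\underline{\Aut}_S(E_i)$, and then give a self-contained degree--trace argument showing that an automorphism of an elliptic curve over a field fixing $E[N]$ is the identity for $N\geq 3$. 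Your argument is essentially a direct proof of the elliptic-curve case of Serre's lemma, so it is more elementary and self-contained, at the cost of being longer; the paper's one-line reduction to the abelian-surface statement is cleaner but relies on a black-box citation. Both are valid, and your degree computation $\psi+\psi^\vee=-N\deg(\psi)$ together with $|\psi+\psi^\vee|\leq 2\sqrt{\deg(\psi)}$ (positive-definiteness of the degree form on $\End(E)$) is exactly the standard mechanism behind that black box.
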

\begin{proof} Such an automorphism induces an automorphism of $(A, \lambda, \gamma)$, where $A = E_1 \times E_2$. It is well known that such an automorphism must be the identity. 
\end{proof}

It follows then from standard techniques that for $N\geq 3$ the functor $\BB_N$ is representable by a quasi-projective scheme $\scrB_N$\label{BN} over $\ZZ[\zeta_N, N^{-1}]$.

\begin{prop} \label{Proposition: BN and A2N} Let $N\geq 2$. Let $J$\label{J} be the automorphism of $\scrB_N$ whose effect on points is
\[ (E_1, E_2, \gamma) \mapsto (E_2, E_1, \gamma \circ s), \]
where $s$ is the natural ``switch", $s: E_1[N]\times E_2[N] \arr E_2[N]\times E_1[N]$. We have a commutative diagram, 
\[\xymatrix{\scrB_N\ar[dr]^\beta \ar[d]& \\ \scrB_N/\langle J \rangle  \ar@{^{(}->}^{\beta_J}[r]& \scrA_{2, N}, }\]
where the diagonal arrow $\beta$\label{beta} is the natural morphism $(E_1, E_2, \gamma) \mapsto (E_1 \times E_2, \lambda_1 \times \lambda_2, \gamma)$, the vertical arrow is an \'etale Galois cover with Galois group $\ZZ/2\ZZ$ and the bottom arrow $\beta_J$ is a closed immersion, induced by $\beta$, whose image is the Humbert surface $\scrH_{1, N}$\label{H1N} in $\scrA_{2, N}$, the Zariski closure of $H_{1, N}\subset \scrA_{2, N}(\CC)$.
\end{prop}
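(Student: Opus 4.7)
The plan is to establish the proposition in three steps: one for the factorization through $J$ together with the freeness of the $J$-action, one for injectivity of $\beta_J$ on geometric points via an idempotent calculation, and one for identification of the image as the Humbert surface and the passage to a closed immersion.

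First, the switch $\sigma\colon E_1\times E_2\to E_2\times E_1$ is an isomorphism of principally polarized abelian surfaces (respecting product polarizations), and one verifies that under $\sigma$ the level structure $\gamma$ on $(E_1\times E_2)[N]$ is carried to $\gamma\circ s$ on $(E_2\times E_1)[N]$. Hence $\beta\circ J=\beta$, and $\beta$ descends to $\beta_J$. For the freeness of the $J$-action, a fixed point would yield isomorphisms $\varphi_1\colon E_1\to E_2$ and $\varphi_2\colon E_2\to E_1$ with $\gamma=(\gamma\circ s)\circ(\varphi_1\times\varphi_2)$; cancelling the bijection $\gamma$ forces the pointwise identity $(\varphi_2(y),\varphi_1(x))=(x,y)$ on $E_1[N]\times E_2[N]$. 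Since $N\geq 2$ makes the $N$-torsion subgroups nontrivial and the coordinates vary independently, such $\varphi_i$ cannot exist. Therefore $J$ acts freely; the quotient $\scrB_N/\langle J\rangle$ exists as a quasi-projective scheme, and the quotient map $\pi$ is finite étale Galois of degree $2$.

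Second, suppose $(E_1,E_2,\gamma)$ and $(E_1',E_2',\gamma')$ have the same image in $\scrA_{2,N}$, witnessed by a polarized isomorphism $\Psi\colon E_1\times E_2\to E_1'\times E_2'$ identifying $\gamma$ with $\gamma'$. Conjugating the first projection by $\Psi$ yields a nonzero Rosati-fixed idempotent $e\in\End(E_1'\times E_2')$. Writing $e$ in block form with entries $a\in\End(E_1')$, $d\in\End(E_2')$, and $b,c$ in the appropriate $\Hom$-groups, the Rosati involution of the product polarization is the conjugate-transpose, so $a,d$ are Rosati-fixed in $\End(E_i')$ and therefore lie in $\ZZ$, while $c=b^\iota$. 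One has $bb^\iota=\deg(b)\in\ZZ_{\geq 0}$, and if $b\neq 0$ the idempotent relation $e^2=e$ forces $a+d=1$ and $a(1-a)=\deg(b)\geq 0$; hence $a\in\{0,1\}$ and $\deg(b)=0$, contradicting $b\neq 0$. Thus $b=c=0$ and the only nontrivial Rosati-fixed idempotents are the two projections, so $\Psi$ preserves or swaps the product decomposition. Consequently $(E_1',E_2',\gamma')$ coincides with $(E_1,E_2,\gamma)$ or $J(E_1,E_2,\gamma)$ in $\scrB_N$, giving injectivity of $\beta_J$ on geometric points.

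Finally, the image of $\beta$ consists of $(A,\lambda,\gamma)$ for which $(A,\lambda)$ is a product of elliptic curves with product polarization, which over $\CC$ is precisely $H_{1,N}$; in general it is contained in the closure $\scrH_{1,N}$, and conversely every closed point of $\scrH_{1,N}$ admits such a decomposition and hence lies in the image. Unramifiedness of $\beta$ follows from comparing tangent spaces: $T_{(E_1,E_2,\gamma)}\scrB_N=H^1(E_1,\calO_{E_1})\oplus H^1(E_2,\calO_{E_2})$ embeds into $T_{(E_1\times E_2,\lambda,\gamma)}\scrA_{2,N}=\mathrm{Sym}^2 H^1(E_1\times E_2,\calO)$ as the two ``diagonal'' summands. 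Finiteness of $\beta$ follows from its quasi-finiteness (fibers of size at most two) together with the valuative criterion of properness: given a DVR $R$ with fraction field $K$ and a triple over $K$ whose product extends to an abelian scheme $\calA/R$, the schematic closures of $E_1,E_2\subset\calA_K$ in $\calA$ are closed subgroup schemes, smooth and proper over $R$, realizing the desired extension of the triple. Combining these, $\beta_J$ is a finite, unramified, universally injective morphism with closed image $\scrH_{1,N}$, and so is a closed immersion. I expect the main technical hurdle to be the block-matrix idempotent computation, which must be carried out uniformly in all characteristics and endomorphism algebras (including the CM and supersingular cases, where $\End(E_i')$ strictly contains $\ZZ$), the essential input being positivity of the degree pairing on $\Hom(E_i',E_j')$.
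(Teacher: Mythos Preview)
Your argument is essentially correct and complete; it differs from the paper's proof mainly in two places, and there is one small imprecision to flag.

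\textbf{Differences.} For injectivity of $\beta_J$ on points, the paper simply invokes a theorem of Weil that a principally polarized product of elliptic curves has a unique (up to swap) such decomposition. Your block--matrix idempotent computation with the Rosati involution \emph{is} a proof of exactly this case of Weil's theorem, carried out uniformly in all characteristics; so you are supplying what the paper cites. For the passage to a closed immersion, the paper's primary argument computes the completed local rings explicitly: writing $\widehat\calO_{\scrA_{2,N},y}\cong W(k)[\![t_{11},t_{12},t_{21},t_{22}]\!]/(t_{12}-t_{21})$ and identifying the locus $t_{12}=t_{21}=0$ with the deformations coming from $\scrB_N$. This yields the same conclusion as your tangent--space embedding $H^1(E_1,\calO)\oplus H^1(E_2,\calO)\hookrightarrow \mathrm{Sym}^2 H^1(E_1\times E_2,\calO)$, but also gives the local equations of $\scrH_{1,N}$. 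The paper does mention your route (proper monomorphism via universal injectivity and EGA~IV 8.11.5) as an alternative; your packaging ``finite $+$ unramified $+$ universally injective $\Rightarrow$ closed immersion'' is valid, since unramified makes the diagonal an open immersion and universal injectivity makes it surjective, hence $\beta_J$ is a finite monomorphism.

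\textbf{One imprecision.} In your valuative--criterion step you take the schematic closures of $E_1,E_2$ inside the abelian scheme $\calA/R$ and assert they are smooth over $R$. Flatness and properness are automatic, and the closure of a subgroup over a Dedekind base is a subgroup scheme, but smoothness of the special fibre requires an extra word. The paper avoids this by invoking N\'eron models: since $\calA_K\cong E_1\times E_2$ and $\calA$ has good reduction, each $E_i$ has good reduction, so its N\'eron model $\calE_i$ is an elliptic curve over $R$, and the N\'eron property gives $\calE_1\times_R\calE_2\cong\calA$. Either substitute this, or note that the multiplication map $\overline{E}_1\times_R\overline{E}_2\to\calA$ between flat proper $R$-schemes is an isomorphism on the generic fibre, hence an isomorphism, which forces each $\overline{E}_i$ to be smooth.
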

\begin{proof}
We first show that the morphism $\scrB_N \arr \scrB_N/\langle J \rangle $ is unramified. Suppose that $J(E_1, E_2, \gamma) = (E_2, E_1, \gamma \circ s)$ is isomorphic to $(E_1, E_2, \gamma)$. There are then isomorphisms $\varphi_1: E_2 \arr E_1$, $\varphi_2: E_1 \arr E_2$ such that $\gamma \circ s = \gamma \circ (\varphi_1 \times \varphi_2)$ and so $s = \varphi_1 \times \varphi_2$ on $E_1[N]\times E_2[N]$. But, for $(a,b)\in E_1[N]\times E_2[N]$ we have $s(a, b) = (b, a)$, while $\varphi_1 \times \varphi_2 (a, b) = (\varphi_1(a), \varphi_2(b))$, which obviously cannot hold for every pair $(a, b)$ if $N\geq 2$. 

The morphism $\scrB_N \arr \scrB_N/\langle J \rangle $, being a quotient by a finite group, is a finite morphism. We conclude that it is a finite \'etale cover with Galois group $\ZZ/2\ZZ$. The natural morphism $\beta:\scrB_N \arr \scrA_N$ clearly factors through $\scrB_N/\langle J \rangle$ and we denote the induced morphism 
\[ \beta_J: \scrB_{N, J} \arr \scrA_{2, N}.\]
We claim that this is a geometrically injective morphism. Suppose that 
\[ (E_1 \times E_2, \lambda_1 \times \lambda_2, \gamma) \cong (E_1' \times E_2', \lambda_1' \times \lambda_2', \gamma').\] 
By a theorem of Weil, after possibly switching $E_1'$ with $E_2'$ , we may assume that $E_1 \cong E_1', E_2 \cong E_2'$ and so, under these identifications, that $\gamma = \gamma'$. Namely, up to applying $J$, every point in the image has a unique pre-image.

The morphism $\beta_J$ is also proper. This follows from the valuative crietrion of properness. As we shall see below the scheme $\scrB_N$ is a union of products of modular curves, in particular it is noetherian and so we can use discrete valuation rings in the criterion. To apply it, we must show that if $R$ is a discrete valuation ring with field of fractions $K$, $(A,\lambda, \gamma)/R$ is an abelian scheme whose generic fiber is isomorphic over $K$ to $(E_1 \times E_2, \lambda_1 \times \lambda_2, \gamma)$ then the elliptic curves $E_i$ extend to elliptic curves over $R$ and then so does the isomorphism. The fact that the elliptic curves extend follows from the theory of N\'eron models (since $E_1\times E_2 = A\otimes_R K$ obviously has good reduction). The extension of the isomorphism follows from the fact that $\scrA_{2, N}$ has a toroidal compactification which is proper over $\ZZ[\zeta_N, N^{-1}]$. Since both $ \scrB_N/\langle J \rangle$ and $\scrA_{2, N}$ are reduced and the morphism $\beta_J$ is proper and injective (hence quasi-finite), $\beta_J$ is a finite injective morphism. We will conclude it is an isomorphism onto its image, the Humbert surface $\scrH_{1,N}$ by showing that for a geometric point $x$ of $ \scrB_N/\langle J \rangle$ and its image $y$ in $\scrA_{2, N}$ the completed local rings are isomorphic. Note that the Humbert divisor $\scrH_{1, N}$ is the image of $\beta_J$, since they have the same generic fiber and both are the closure of their generic fiber.

Indeed, suppose that $y$ is the image of the $k$-geometric point $(y_1, y_2)$ of $\scrB_N$. The completed local ring on $\scrB_N$ is then just isomorphic to $W(k)[\![t_1, t_2]\!]$, as $\scrB_N$ is a product of smooth curves. Moreover, if $E_i$ is the elliptic curve corresponding to $y_i$, then $t_i$ is the parameter arising via the local deformation theory for elliptic curves (the level structure need not be a product level structure; regardless it extend uniquely by \'etaleness). On the other hand, the completed local ring on $\scrA_{2, N}$ of the point $y$ corresponding to $(A, \lambda, \gamma) = (E_1 \times E_2, \lambda_1 \times \lambda_2, \gamma)$ is isomorphic to the ring $W(k)[\![t_{11}, t_{1, 2}, t_{2, 1}, t_{2, 2}]\!]/(t_{1, 2} - t_{2, 1})$ and $\scrH_{1, N}$ contains locally the closed formal subscheme defined by the ideal $(t_{1, 2}, t_{2, 1})$, as is clear from the interpetation of the variables through local deformation theory. Since $\scrB_{N}/\langle J \rangle$ is locally irreducible and the morphism is geometrically injective also
$\scrH_{1, N}$ is locally irreducible. It follows that $\scrH_{1, N}$ is defined locally by the ideal $(t_{1, 2}, t_{2, 1})$ and that the morphism is an isomorphism on every completed local ring, which is sufficient to conclude the proof.

Another way to conclude the proof is to prove that the morphism $\beta_J$ is universally injective (or a monomorphism) and then use EGA IV, \S 8.11, Proposition (8.11.5). Since $\scrB_{N}/\langle J \rangle$ is the categorical quotient of $\scrB_N$, we know it as a functor of points and so injectivity boils down to the following statement: Given elliptic curves $E_1, \dots, E_4$ over a connected scheme $S$ such that $E_1 \times E_2 \cong E_3 \times E_4$ as principally polarized abelian schemes over $S$ then, either $E_1 \cong E_3$ and $E_2 \cong E_4$, or $E_1 \cong E_4$ and $E_2 \cong E_3$. Note that to identify $E_1$ in $E_3 \times E_4$ is equivalent to giving an endomorphism. Choose a geometric point $x$ of $S$ and use Weil's theorem as above together with Grothendieck's theorem $\End_S(E_3 \times E_4) \injects \End_{k(x)}((E_3 \times E_4)  \otimes k(x))$.
\end{proof}

We next discuss the complex uniformization of $\scrB_N$. Recall the classical construction of the modular curves:  Given $\tau \in \gerH$ one lets $E_\tau = \CC/\langle 1, \tau \rangle$ be the corresponding elliptic curve, and we get a symplectic isomorphism $E_\tau[N] \arr (\ZZ/N\ZZ)^2$ by sending $1/N$ to $(1, 0)$ and $\tau/N$ to $(0, 1)$. We call this level structure $\gamma_0$. Let $\sigma = M\tau$, where $M = \left(\begin{smallmatrix} a & b\\ c & d \end{smallmatrix} \right)$. Then the isomorphism $E_\sigma \arr E_\tau$ is given by multiplication by $j(M, \tau) = c\tau + d$. Since $\gamma_0(A + B\sigma)/N = {^t(A, B)}$ and $1/N$ is sent to $(d + c\tau)/N$, while $\sigma/N$ is sent $(b + a\tau)/N$, we find that $(E_\sigma, \gamma_0)$ is isomorphic to $(E_\tau, \left(\begin{smallmatrix}a & -b \\ -c & d \end{smallmatrix}\right)\circ \gamma_0)$.
We remark that $M = \left(\begin{smallmatrix}a & b \\ c & d \end{smallmatrix}\right)\mapsto M^\dagger :=\left(\begin{smallmatrix}a & -b \\ -c & d \end{smallmatrix}\right)$ is an outer automorphism of $\SL_2(\ZZ)$ given by conjugating by $\left(\begin{smallmatrix}1 & 0 \\0 & -1 \end{smallmatrix}\right)$ in $\GL_2(\ZZ)$.

Consider the space
\[ \gerH \times \gerH \times \Symp_4(\ZZ/N\ZZ).\]
(Here the symplectic group is relative to the pairing fixed above.)
To a point $(\tau_1, \tau_2, \gamma)$ of this space we associate the triple $(E_{\tau_1}, E_{\tau_2}, \gamma \circ (\gamma_0 \times \gamma_0))$. The group $\SL_2(\ZZ) \times \SL_2(\ZZ)$ acts on the space by
\[(M_1, M_2) \ast (\tau_1, \tau_2, \gamma) = (M_1\tau_1, M_2\tau_2, \diag(M_1^\dagger, M_2^\dagger) \circ \gamma).\]
The space of orbits is isomorphic to $\scrB_N(\CC)$. Furthermore, choose a complete set of representatives $\gamma_1, \dots, \gamma_t$ ($t = t(N)$) for $\SL_2(\ZZ/N\ZZ) \times \SL_2(\ZZ/N\ZZ) \backslash \Symp_4(\ZZ/N\ZZ)$. Then, 
\[\scrB_N (\CC) \cong \coprod_{i = 1}^t (\Gamma(N) \backslash \gerH)^2 = \coprod_{i = 1}^t Y(N) \times Y(N).\]
Via this identification, we associate to a pair $(\tau_1,\tau_2)$ in the $i$-th (or $\gamma_i$-th, if one prefers) component of $\scrB_N(\CC)$ the triple $(E_{\tau_1}, E_{\tau_2}, \gamma_i\circ (\gamma_0 \times \gamma_0))$. 

The involution $J$ takes the $\gamma_i$-component to $\gamma_j$-component where $\gamma_j$ is determined by $\gamma_i \circ (\gamma_0 \times \gamma_0) \circ s\in (\SL_2(\ZZ/N\ZZ) \times \SL_2(\ZZ/N\ZZ)) \gamma_j\circ (\gamma_0 \times \gamma_0) $. Typically, $\gamma_j \neq \gamma_i$. In fact, 
the components of $\scrB_N$ are parameterized by $\SL_2(\ZZ/N\ZZ) \times \SL_2(\ZZ/N\ZZ)\backslash \Symp_4(\ZZ/N\ZZ)$, while the components of $\scrB_{J}/\langle N\rangle$ are parameterized by 
$\SL_2(\ZZ/N\ZZ) \times \SL_2(\ZZ/N\ZZ)\backslash \Symp_4(\ZZ/N\ZZ)/H$, where $H = \{1, \left(\begin{smallmatrix}0 & I_2 \\ I_2 & 0 \end{smallmatrix} \right)\}$. 

\begin{rmk} Here is a typical example illustrating the difference between $\scrH_{1, N}$ and $\scrB_N$. Let $K$ be a field, $L$ a quadratic Galois extension of $K$ and $\sigma$ the non-trival automorphism of $L$ over $K$. Let $E_1$ be an elliptic curve defined over $L$ and not over $K$. Let $E_2$ be the curve obtained by $\sigma$ to the equation of $E_1$ (and so $j(E_2) = \sigma (j(E_1))$). The point $(E_1, E_2)$ of $\scrB_1$ is defined over $L$, but not over $K$. On the other hand, its image, $A = (E_1\times E_2, \lambda_1 \times \lambda_2)$ is defined over $K$. A quadratic extension is needed to define the elliptic curves $E_1, E_2$ such that $A \cong E_1 \times E_2$. To study the situation more precisely, we must include level $N$ structure.

From a scheme theoretic point of view we have the following cartesian diagram,
\[\xymatrix{\Spec(K) \times_{\scrA_{2, N}} \scrB_N\ar[d]\ar[r] & \scrB_N \ar[d]\ar[dr] & \\ \Spec(K) \ar[r] &\scrA_{2, N} & \scrB_{N, J}\ar@{_{(}->}[l].}\]
The morphism $\Spec(K) \times_{\scrA_{2, N}} \scrB_N \arr \Spec(K)$ is finite \'etale (being a base change of the morphism $\scrB_N \arr \scrA_{2, N}$) and so $\Spec(K) \times_{\scrA_{2, N}} \scrB_N = \Spec(L')$, where $L'/K$ is a separable quadratic $K$-algebra. 

\end{rmk}

%%%%%%%%%%%%%%%%%%%%%%%%%%%%%%%%%%%%%%%

\

\

\section{A lemma in arithmetic intersection theory}
\label{section: intersection theory}

Let $R$ be a Dedekind ring, finite over $\ZZ_p$, $\gerp \normal R$ a prime ideal. Let $\pi:S \arr \Spec(R)$ be a smooth scheme of finite type over $\Spec(R)$. Let $x\in S$ be a closed point of characteristic $p$ lying over $\gerp$. Then $\calO_S^{\wedge x}$, the completed local ring at $S$ is isomorphic to $\tilde{R}[\![x_1, \dots, x_n]\!]$ where $n$ is the relative dimension of $S$ over $R$ and  
$\tilde{R}=R\otimes_{R_0} W(R/\gerp)$, where $R_0$ the maximal unramified subring of $R$. See \cite{CohenI}.
In particular, $\calO_S^{\wedge x}$ is a noetherian unique factorization domain. As a consequence, every divisor on $\Spf(\calO_S^{\wedge x})$
is principal. (We remark that in fact this latter fact follows directly from the Auslander-Buchsbaum theorem without need for Cohen's theorem.)

\begin{lem}\label{Lemma: valuation and reduction} Let $S \arr \Spec(R)$ be a smooth integral scheme of finite type over a Dedekind ring~$R$ containing $\ZZ$. Let $B$ be a Dedekind ring containing $R$, $K$ its field of fractions and $\eta$ be the generic point. Let 
\[\iota: \Spec(B) \arr S, \]
be a morphism of schemes over $R$. Let $f$ be a rational function on $S$ such that the divisor of $f$ intersects the image of $\iota$ properly (in particular, $f(\eta) = \iota^\ast f$ is a well defined element of $K$). 
Let the divisor of $f$ equal $(f)_0 - (f)_\infty = \sum m_iD_i$, where the $m_i$ are non-zero integers and $D_i$ irreducible reduced effective divisors. Let $Z$ be the closed reduced subscheme which is the support of ${\rm div}(f)_0$.

Let $\gerp$ be a prime ideal of $B$ and $x$ its image under $\iota$. Suppose that $\val_\gerp(f(\eta))  = \alpha >0$. Then $d=\max\{m_i: x\in D_i\} > 0$. Let $a =\lceil \alpha/d\rceil$. Then $a>0$ and the morphism $\iota: \Spec(B/\gerp^a)$ factors through ${\rm div}(f)_0$:
\begin{equation}\label{equation: factoring through divisor}
\xymatrix{\Spec(B/\gerp^a) \ar[r]\ar@{-->}[drr] & \Spec(B) \ar[r]^\iota & S \\
& & Z\ar@{^{(}->}[u]  } 
\end{equation}
\end{lem}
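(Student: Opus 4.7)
The plan is to localize at the point $x\in S$ and work in the completed local ring $\calO_S^{\wedge x}$, which, by the paragraph just before the lemma, is a regular local (and so factorial) noetherian ring. Since $S$ is smooth, the divisor $(f)=\sum m_iD_i$ is, in a neighborhood of $x$, cut out by a product $f=u\cdot\prod_i g_i^{m_i}$, where $g_i\in\calO_{S,x}$ is a local equation for $D_i$ when $x\in D_i$, and $g_i$ is invertible otherwise; here $u$ is a unit. I will then pull back along $\iota$, writing $\beta_i:=\val_\gerp(\iota^\ast g_i)$. The proper intersection hypothesis says that $\iota(\Spec B)\not\subseteq D_i$ for any $i$ with $x\in D_i$, so each $\iota^\ast g_i$ is a non-zero element of $B$, whence $\beta_i\in\ZZ_{\geq 0}$; hence the product formula gives the valuation identity
\begin{equation*}
\alpha \;=\; \val_\gerp(\iota^\ast f) \;=\; \sum_{x\in D_i} m_i\beta_i.
\end{equation*}

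From this identity the first claim is immediate: if every $D_i$ with $x\in D_i$ had $m_i\leq 0$, then $\alpha\leq 0$, contradicting $\alpha>0$. Thus some $D_i$ through $x$ has $m_i>0$ and $\beta_i>0$, so $d=\max\{m_i:x\in D_i\}>0$.

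Next I will bound $\alpha$ from above. Dropping the (necessarily non-positive) contributions from those $i$ with $m_i<0$ and majorizing each remaining $m_i$ by $d$ gives
\begin{equation*}
\alpha \;\leq\; \sum_{\substack{x\in D_i\\ m_i>0}} m_i\beta_i \;\leq\; d\cdot\!\!\!\sum_{\substack{x\in D_i\\ m_i>0}}\!\!\beta_i.
\end{equation*}
Since the rightmost sum is a non-negative integer, it is at least $\lceil\alpha/d\rceil=a$.

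Finally I will translate this into the claimed factorization. Locally at $x$, the reduced closed subscheme $Z=\supp\,(f)_0$ is the union $\bigcup_{m_i>0,\,x\in D_i}D_i$ with reduced structure; in the factorial ring $\calO_{S,x}$ (or its completion) its ideal is therefore generated by the single element $h:=\prod_{m_i>0,\,x\in D_i}g_i$. By the displayed inequality,
\begin{equation*}
\val_\gerp(\iota^\ast h)\;=\;\sum_{\substack{x\in D_i\\ m_i>0}}\beta_i\;\geq\; a,
\end{equation*}
so $\iota^\ast h\in\gerp^a$. Since the image of $\Spec(B/\gerp^a)$ in $S$ is the single point $x$, the composite map of structure sheaves factors through $\calO_{S,x}/(h)=\calO_{Z,x}$, yielding the desired dashed arrow in diagram~\eqref{equation: factoring through divisor}. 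No step here is really an obstacle; the only thing to be careful about is the reduced scheme structure on $Z$ and that $\calO_S^{\wedge x}$ is a UFD, both of which are provided by the setup preceding the lemma.
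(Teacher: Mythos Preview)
Your proof is correct and follows essentially the same line as the paper's own argument: localize at $x$, use that the (completed) local ring is a UFD so each $D_i$ through $x$ is cut out by a prime element $g_i$, write $\alpha=\sum_{x\in D_i} m_i\beta_i$ with $\beta_i=\val_\gerp(\iota^\ast g_i)\geq 0$, drop the negative terms and majorize by $d$ to get $\sum_{m_i>0}\beta_i\geq\lceil\alpha/d\rceil$, and then observe that $Z$ is locally defined by $h=\prod_{m_i>0}g_i$ so $\iota^\ast h\in\gerp^a$ gives the factorization. The only cosmetic difference is that the paper passes explicitly to the formal completion $\calO_S^{\wedge x}$, whereas you work in $\calO_{S,x}$ itself; since $\calO_{S,x}$ is already regular local and hence a UFD (as the paper itself remarks via Auslander--Buchsbaum), your choice is equally valid.
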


\begin{rmk} We shall apply this Lemma later, in the following context: $S$ will be the modular scheme $\scrA_{2, N}$, $f$ will be a function such that $f = \Theta^k/g$, where $g$ is a modular form of weight $10k$ with rational Fourier coefficients, the morphism $\iota$ will be such that $\iota(\eta)$ is a CM point and our assumption will be that $\val_\gerp(f) = a > 0$. \end{rmk}
\begin{proof} We first argue that we may replace $S$ by the $\Spf(\calO_S^{\wedge x})$. Indeed, on the one hand, diagram~(\ref{equation: factoring through divisor}) gives by passing to completions at $x$ a diagram
\begin{equation}\label{equation: factoring through formal divisor}
\xymatrix{\Spec(B/\gerp^a) \ar[r]\ar@{-->}[dr] &  \Spf(\calO_S^{\wedge x}) \\
 & Z\cap \Spf(\calO_S^{\wedge x})\ar@{^{(}->}[u]  } 
\end{equation}
On the other hand, diagram~(\ref{equation: factoring through formal divisor}) is coming from unique continuous morphisms $\calO_S^{\wedge x} \arr B/\gerp^n$ etc., that arise uniquely from morphisms $\calO_S \arr B/\gerp^n$, etc. 

In $\Spf(\calO_S^{\wedge x})$ every divisor is principal and so we may write there $D_i' = (f_i)$ where $f_i\in \calO_S^{\wedge x}$, and $D_i'$ is the induced divisor on $\Spf(\calO_S^{\wedge x})$. $D_i'$ may be reducible, but it is reduced. 
%It may even happen that $D_i' \subseteq D_j'$, but that is not a problem. 
If $x \not\in D_i$ then $f_i$ is a unit  in $\calO_S^{\wedge x}$. Via the morphism
$ \Spec(B_\gerp) \arr \Spec(B) \arr S$, that induces a morphism $\Spec(B_\gerp) \arr \Spf(\calO_S^{\wedge x})$, we may view $f(\eta)$ as an element of $K_\gerp$, which is equal, up to a unit, to $\prod_if_i(x)^{m_i}$ and so:
\begin{multline}\alpha = \val_\gerp(f(\eta)) = \sum_{\{i: x\in D_i\}} m_i \cdot \val_\gerp(f_i(\eta))\\ = \sum_{\{i: x\in D_i, m_i>0\}} m_i \cdot \val_\gerp(f_i(\eta))
+\sum_{\{i: x\in D_i, m_i<0\}} m_i \cdot \val_\gerp(f_i(\eta))
.\end{multline}
We note that if $x\in D_i$ then $\val_\gerp(f_i) \geq 1$ (it may be strictly bigger, of course). In particular, $d>0$. Consider $\alpha' = \sum_{\{i: x\in D_i, m_i>0\}} \val_\gerp(f_i(\eta))$; clearly $\alpha'\cdot d \geq \alpha$ and so $\alpha' \geq \lceil \alpha/d \rceil$ and so it will be enough to prove that diagram~(\ref{equation: factoring through formal divisor}) holds with $\alpha'$. Consider the function $f_Z = \prod_{\{i: x\in D_i, m_i >0\}} f_i$ which defines $Z\cap \Spf(\calO_S^{\wedge x})$. To show diagram~(\ref{equation: factoring through formal divisor}) holds is equivalent to show that $f_Z$, when pulled back to $\Spec B_\gerp$ has valuation at least $\alpha'$. But the valuation is precisely $\sum_{\{i: x\in D_i, m_i>0\}} \val_\gerp(f_i(\eta))$ and we are done.  
\end{proof}

\subsubsection{Examples} The whole theory is developed precisely to deal with situations where one cannot just ``write down everything explicitly", and so our examples are a bit artificial.
\begin{itemize}
\item Consider the scheme $S = \Spec(\ZZ[x])$ and the function $f(x) = x^2 - 1$. The divisor of $f$ is 
\[D_1 + D_2, \qquad D_1 = {\rm div}(x-1), D_2 = {\rm div}(x+1).\]
Let $\tau = 3$ corresponding the the point determined by the homomorphism $\ZZ[x] \arr \ZZ, x \mapsto 3$. We have $\val_2(f(\tau)) = \val_2(8) = 3$. We examine the situation on the completed local ring of the point $(2, x-3) = (2, x-1) = (2, x+1)$ (the reduction of $\tau$ modulo $2$). Also at this completed local ring the divisor of $f$ is given by $D_1 = {\rm div}(x-1), D_2 = {\rm div}(x+1)$ (with a slight abuse of notation). It follows from our lemma that the morphism $\Spec(\ZZ) \arr \Spec(\ZZ[x])$ corresponding to $\tau$ induces a morphism
\[ \Spec(\ZZ/2^3\ZZ) \arr D_1 \cup D_2,\] where by $D_1 \cup D_2$ we mean the closed reduced subscheme whose support is $D_1 \cup D_2$, namely $\Spec(\ZZ[x]/(x^2 - 1)$. Indeed, this is nothing but saying that there is indeed a well defined homomorphism $\ZZ[x]/(x^2 - 1) \arr \ZZ/2^3\ZZ$ taking $x$ to $3$.

An interesting feature of this example is that the morphism $\Spec(\ZZ) \arr \Spec(\ZZ[x])$ only induces a well defined morphism $\Spec(\ZZ/2^i\ZZ) \arr D_i$ (where $D_i$ is the reduced closed scheme supported on $D_i$, namely $\Spec(\ZZ[x]/(x-1))$ for $i=1$ and  $\Spec(\ZZ[x]/(x+1))$ for $i=2$). Moreover, the divisors $D_1$ and $D_2$ intersect transversely, the intersection being $(x-1, x+1)$. The subtlety is in the scheme structure on $D_1 \cup D_2$ and in particular in the fact that $\ZZ[x]/(x^2 - 1) \subsetneqq \ZZ[x]/(x-1) \oplus \ZZ[x]/(x+1)$.
\item Once more $S = \Spec(\ZZ[x])$ but now $f(x) = x^2 +1$, which is irreducible. The point $\tau = 2$ corresponds to the homomorphism $\ZZ[x] \arr \ZZ, x\mapsto 2$. We have $\val_5(f(\tau)) = \val_5(5) = 1$. We have an induced morphism $\Spec(\ZZ/5\ZZ) \arr \Spec(\ZZ[x]/(x^2 +1))$, which amount to the fact that there is a homomorphism $\ZZ[x]/(x^2 +1) \arr \ZZ/5\ZZ$ taking $x$ to $2$.

In the completed local ring of the point $(5, x-2)$ the function $f$ decomposes as $f(x) = (x-i)(x+i)$ where $i$ is an element of $\ZZ_5$ whose square is $-1$ and whose reduction is $2$ modulo $5$. Thus, the function $x-i$ vanishes to first order at this point, while the function $x+i$ is a unit. The divisor of $f$ is locally $D_1 = {\rm div}(x+i)$ and the lemma states that we have an induced morphism $\Spec(\ZZ/5\ZZ) \arr \Spf(\ZZ_5[\![(x-2)]\!]/(x-i))$, which amounts to the fact that there is a well defined continuous homomorphism $\ZZ_5[\![(x-2)]\!]/(x-i) \arr \ZZ/5\ZZ$ taking $x$ to $2$.

\item Consider $\Spec(\ZZ[1/6][x, y]/(y^2 - (x^3 - 1)))$ and the function $f(x) = x-1$ whose divisor is $2[(1, 0)]$, and we note that the divisor $[(1, 0)]$ is not principal . We have $x^3 - 1 = (x-1)(x^2 + x + 1)$ and we let $S'$ be the open subscheme whose complement is given by $x^2 + x + 1$. The divisor of $f$ on $S'$ is still $2[(1, 0)]$ but now $[(1, 0)]$ is locally principal; it is the divisor $D = [(1, 0)]$ of $(x-1)/y$. The divisor of $f$ is $2D$. Finally, let $S$ be the base change of $S'$ to $\ZZ[1/6,\sqrt{215}]$.

We consider the point $\tau = (6, \sqrt{215})$ of $S$, corresponding to the homomorphism \[\ZZ[1/6,\sqrt{215}, x,y,1/(x^2 + x+1)]/(y^2 - x^3 + 1) \arr \ZZ[1/6, \sqrt{215}],\]  given by \[ (x, y) \mapsto (6, \sqrt{215}).\] Let $\gerp$ be the prime ideal above $5$ in $\ZZ[1/6,\sqrt{215}]$. We have $f(\tau) = 5$ and $\val_\gerp(f(\tau))=2$. We deduce from our lemma that we have an induced morphism $\Spec(\ZZ/5\ZZ) \arr D$, corresponding to the fact that there is a well defined homomorphism \[ \ZZ[1/6,\sqrt{215}, x,y,1/(x^2 + x+1)]/(y^2 - x^3 + 1, (x-1)/y) \arr \ZZ[1/6,\sqrt{215}]/(\gerp) \cong \ZZ/5\ZZ,\] where $(x, y) \mapsto (6, \sqrt{215})$.
\end{itemize}

%%%%%%%%%%%%%%%%%%%%%%%%%%%%%%%%%

\

\

\section{A problem in deformation theory} \label{section: deformation theory}
\subsection{Deforming endomorphisms} Let~$A$ be an abelian
variety of dimension~$g$ over a perfect field~$k$ of characteristic
$p$ and let~$r$ be the rank over~$\ZZ$ of~$\End_k(A)$ (it is finite
and at most~$4g^2$). Let~$(R, \germ_R)$ be a local artinian ring
with residue field~$k = R/\germ_R$ of characteristic~$p$. Let~$n_R$
be the minimal positive integer such that~$\germ_R^{n_R} = 0$. Let
$t_R$ be the least positive integer such that~$p^{t_R} \in
\germ_R^{p-1}$.

Let~$\AA/R$ be a deformation of~$A$.\label{AA} By that we mean that~$\AA\arr
\Spec(R)$ is an abelian scheme and that there are given closed
immersions:
\[ \xymatrix@M=10pt{\AA \ar[d] & A \ar[d]\ar@{_{(}->}[l] \\ \Spec(R) & \Spec(k)\ar@{_{(}->}[l]}\]
By a fundamental result of Grothendieck, we have an inclusion of
rings
\[ \End_R(\AA) \injects \End_k(A).\]
Let us define the magnitudes  (a-priory possibly infinite)
\[ i(\AA/R)  = [\End_k(A): \End_R(\AA)],\]
and
 \begin{align} \label{iI}
 \geri(R) & = \inf\{i(\AA/R): \AA/R \text{\rm \; a deformation of
\;}A \}, \\ \qquad \gerI(R) & = \sup\{i(\AA/R): \AA/R \text{\rm \; a
deformation of \;}A \}.
\end{align}

\id These depend on~$A$ but we suppress that from
the notation. We are interested in studying $i(\AA/R), \geri(R)$
and~$\gerI(R)$. Although we provide below some general results, our
focus later is on the case of elliptic curves. The general case certainly
deserves further study, but it will not be carried out here. 

\begin{prop}\label{prop:basic estimate on index} The quantity~$i(\AA/R)$ is finite and is a power
of~$p$. So are~$\geri(R)$ and~$\gerI(R)$. 
The following inequalities hold.
\[1 \leq \geri(R) \leq \gerI(R)  \leq  p^{(r-1) t_R \lceil (n_R-1)/(p-1)\rceil}.\]
\end{prop}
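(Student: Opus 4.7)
The plan is to produce an explicit integer $N$ for which $p^N\cdot\End_k(A)\subseteq\End_R(\AA)$, and then to combine this containment with the trivial observation $\mathrm{id}_A\in\End_R(\AA)$ to extract the stated index bound.

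First I would reduce the finiteness and $p$-primary nature of $i(\AA/R)$ to a question about $p$-divisible groups. For any $\ell\neq p$ the group scheme $\AA[\ell^n]$ is finite \'etale over $R$ and hence canonically isomorphic to the constant group scheme $A[\ell^n]_R$, so every $\alpha\in\End_k(A)$ automatically acts on $\AA[\ell^\infty]$. Combined with Serre--Tate (applicable because $p$ is nilpotent in $R$), which identifies deformations of $A$ with deformations of $A[p^\infty]$, this yields
\[
\End_R(\AA)\;=\;\End_k(A)\cap\End_R(\AA[p^\infty])
\]
inside $\End_k(A[p^\infty])$. Consequently $\End_k(A)/\End_R(\AA)$ embeds in the $\ZZ_p$-module $\End_k(A[p^\infty])/\End_R(\AA[p^\infty])$, and $i(\AA/R)$ is a power of $p$ as soon as it is finite.

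The heart of the argument is an obstruction calculation. Let $D$ be the evaluation on $R$ of the crystalline Dieudonn\'e crystal of $\AA[p^\infty]$: a free $R$-module of rank $2g$ with Frobenius $F$, Verschiebung $V$, and a Hodge filtration $\omega\subset D$ lifting $\omega_A\subset D\otimes_Rk$. By crystal functoriality every $\alpha\in\End_k(A[p^\infty])$ extends canonically to an $R$-linear $\tilde\alpha\colon D\to D$ commuting with $F$ and $V$, and by Grothendieck--Messing $\alpha$ lifts to $\End_R(\AA[p^\infty])$ if and only if $\tilde\alpha(\omega)\subseteq\omega$. Since the reduction of $\tilde\alpha$ preserves $\omega_A$, the composition
\[
\omega\;\hookrightarrow\;D\;\xrightarrow{\tilde\alpha}\;D\;\twoheadrightarrow\;D/\omega
\]
vanishes modulo $\germ_R$ and therefore factors through $\germ_R\cdot(D/\omega)$, yielding an obstruction $o(\alpha)\in\Hom_R\bigl(\omega,\,\germ_R\cdot(D/\omega)\bigr)$ that is $\ZZ$-linear in $\alpha$. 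The numerical hypothesis now enters: from $p^{t_R}\in\germ_R^{p-1}$ we have, for $N:=t_R\lceil(n_R-1)/(p-1)\rceil$,
\[
p^N\;\in\;\germ_R^{(p-1)\lceil(n_R-1)/(p-1)\rceil}\;\subseteq\;\germ_R^{\,n_R-1},
\]
so $p^N\cdot\germ_R\subseteq\germ_R^{\,n_R}=0$. Hence $p^N$ annihilates the target of $o$, whence $o(p^N\alpha)=p^N o(\alpha)=0$ and $p^N\alpha$ lifts.

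With $p^N\End_k(A)\subseteq\End_R(\AA)$ in hand, combined with $\ZZ\cdot\mathrm{id}_A\subseteq\End_R(\AA)$, we obtain $\End_R(\AA)\supseteq\ZZ+p^N\End_k(A)$. Because $\End_k(A)$ is an order in a semisimple $\QQ$-algebra and is therefore integral over $\ZZ$, the submodule $\ZZ\cdot\mathrm{id}_A$ is saturated and $\End_k(A)/\ZZ$ is $\ZZ$-free of rank $r-1$; consequently
\[
i(\AA/R)\;\leq\;\bigl[\End_k(A):\ZZ+p^N\End_k(A)\bigr]\;=\;\#\bigl(\End_k(A)/(\ZZ+p^N\End_k(A))\bigr)\;\leq\;p^{(r-1)N},
\]
which is the claimed bound on $\gerI(R)$; the remaining inequalities $1\le\geri(R)\le\gerI(R)$ are formal. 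The step I expect to be the main obstacle is the Grothendieck--Messing input, which requires $\germ_R$ to carry a divided-power structure compatible with the canonical one on $p\ZZ_p$---a hypothesis not automatic here. When it fails one can either iterate through the square-zero extensions $R/\germ_R^{i+1}\twoheadrightarrow R/\germ_R^i$ (obtaining only the weaker bound $p^{(r-1)(n_R-1)}$) or pursue the alternative high-ramification method announced in the introduction; the precise factor $t_R\lceil(n_R-1)/(p-1)\rceil$ in the sharp bound measures exactly how much divided-power structure $\germ_R$ actually supports.
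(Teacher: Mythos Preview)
Your overall strategy is right and matches the paper's: reduce to a $p$-primary statement (the paper invokes Serre--Tate for this as well), use the Grothendieck--Messing obstruction to lifting an endomorphism through a divided-power thickening, observe that the obstruction is killed by a suitable power of $p$, and save a factor of $p^N$ in the index using the fact that $\ZZ\cdot\mathrm{id}$ is a direct summand of $\End_k(A)$.

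The genuine gap is exactly the one you flag, but your proposed workarounds do not close it. Your main computation applies Grothendieck--Messing to the single thickening $\Spec(k)\hookrightarrow\Spec(R)$, which requires divided powers on $\germ_R$; without that hypothesis the extension of $\alpha\in\End_k(A[p^\infty])$ to an $R$-linear map on $D$ is not available, and your obstruction $o(\alpha)$ is not defined. Falling back to square-zero steps, as you suggest, yields only $p^{(r-1)(n_R-1)}$, not the stated bound. The missing idea is that one should instead filter in steps of length $p-1$: set $R_j=R/\germ_R^{1+j(p-1)}$ for $j=0,\dots,\ell$ with $\ell=\lceil(n_R-1)/(p-1)\rceil$. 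Each kernel $I_j=\ker(R_j\twoheadrightarrow R_{j-1})$ satisfies $I_j^p=0$, so it carries the canonical nilpotent divided powers $\gamma_n(x)=x^n/n!$ for $n<p$, $\gamma_n=0$ for $n\ge p$, and Grothendieck--Messing applies at each step with no hypothesis on $R$. Moreover $p^{t_R}\in\germ_R^{p-1}$ gives $p^{t_R}I_j=0$, so at each of the $\ell$ steps the obstruction is killed by $p^{t_R}$; iterating yields $p^{t_R\ell}\End_k(A)\subseteq\End_R(\AA)$ and hence the exact exponent $(r-1)t_R\lceil(n_R-1)/(p-1)\rceil$. This is the mechanism behind the factor you noticed but could not produce.
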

\begin{cor} Let~$K$ be a CM field and~$\calO$ an order of~$K$. Let~$A \arr \Spec(R)$ be an
abelian scheme over a dvr $(R, \germ_R)$ whose residue field
is a perfect field~$k$ of characteristic~$p$,  and suppose that we are given
an optimal embedding~$\calO \injects \End_R(A)$. Let
$\calO'\supseteq \calO$ be the optimally embedded order of~$K$ in
$\End_k(A \otimes k)$. Then~$[\calO':\calO]$ is a power of~$p$.
\end{cor}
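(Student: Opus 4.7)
The plan is to show that no prime $\ell \neq p$ divides the finite index $[\calO':\calO]$. I will argue by contradiction: assume some such $\ell$ does divide the index, pick $\alpha \in \calO' \setminus \calO$ with $\ell\alpha \in \calO$, and aim to prove $\alpha \in \End_R(A)$. Since optimality gives $\calO = K \cap \End_R(A)$, this would force $\alpha \in \calO$, contradicting the choice of $\alpha$.

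First I will lift $\alpha$ through every infinitesimal level. For each $n \geq 1$, apply Proposition~\ref{prop:basic estimate on index} to the artinian deformation $A_n := A \otimes_R R/\germ_R^n$ of $A\otimes k$; it yields that $[\End_k(A\otimes k) : \End_{R/\germ_R^n}(A_n)]$ is a power of $p$. Since $\ell\alpha \in \End_R(A) \subseteq \End_{R/\germ_R^n}(A_n)$ and $\gcd(\ell,p)=1$, the element $\alpha$ itself must lie in $\End_{R/\germ_R^n}(A_n)$ for every $n$. Grothendieck's formal existence theorem, applied to the projective formal scheme $\widehat{A}$ over $\Spf \hat R$, then identifies $\varprojlim_n \End_{R/\germ_R^n}(A_n)$ with $\End_{\hat R}(A_{\hat R})$, so $\alpha$ lifts to an element $\hat\alpha \in \End_{\hat R}(A_{\hat R})$.

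The remaining — and most delicate — step is descent from $\hat R$ to $R$. Here the crucial input is that $\ell$ is a unit in $R$, so $A[\ell]_R$ is a finite étale $R$-group scheme. The $R$-endomorphism $\ell\alpha$ becomes $\ell \cdot \hat\alpha$ after base change to $\hat R$, which factors through $[\ell]\colon A_{\hat R} \arr A_{\hat R}$ and hence kills $A[\ell]_{\hat R}$. By faithful flatness of $R \injects \hat R$ together with the étaleness of $A[\ell]_R$, the restriction $\ell\alpha|_{A[\ell]_R}$ must already vanish over $R$. Consequently $\ell\alpha$ factors over $R$ as $\ell\beta$ for some $\beta \in \End_R(A)$, and torsion-freeness of $\End_{\hat R}(A_{\hat R}) \subset \End_k(A\otimes k)$ then forces $\hat\alpha = \beta$, giving $\alpha \in \End_R(A)$. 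The hypothesis $\ell \neq p$ is indispensable precisely in this step — for $\ell = p$ the étaleness of $A[\ell]_R$ fails — which structurally explains why only $p$ can contribute to $[\calO':\calO]$. I expect this descent to be the main technical hurdle; the artinian lifting step is by contrast a routine consequence of the Proposition combined with formal algebraization.
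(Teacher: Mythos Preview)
Your argument is correct. The paper states the Corollary immediately after the Proposition without proof, leaving the passage from the artinian quotients $R/\germ_R^n$ to the dvr $R$ implicit; you have filled this in carefully via formal GAGA and faithfully flat descent along $R\hookrightarrow\hat R$, and each step checks out.

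One simplification is worth noting: the detour through $\hat R$ (and hence the Proposition itself, and Grothendieck's existence theorem) can be avoided. Since $\alpha\in\End_k(A\otimes k)$, the reduction $(\ell\alpha)_k=\ell\cdot\alpha$ already kills $A[\ell]_k$. Now $(\ell\alpha)\vert_{A[\ell]}$ and the zero map are two morphisms from the finite $R$-scheme $A[\ell]$ to the finite \'etale (hence unramified, separated) $R$-scheme $A[\ell]$; their equalizer is therefore open and closed in $A[\ell]$ and contains the entire special fibre. An open--closed subscheme of a finite scheme over a local ring is cut out by an idempotent $e$, and an idempotent with $1-e$ in the Jacobson radical must equal $1$. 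Hence $(\ell\alpha)\vert_{A[\ell]}=0$ already over $R$, and your factorisation $\ell\alpha=\ell\beta$ with $\beta\in\End_R(A)$ follows directly. This shows the Corollary is in fact logically independent of the Proposition. A minor remark on your write-up: in your descent step, faithful flatness of $R\hookrightarrow\hat R$ alone suffices --- the \'etaleness of $A[\ell]_R$ plays no role there, though it is precisely what powers the shortcut just described.
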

\begin{exa} \label{exa: supersingular}Suppose that~$E$ is an elliptic curve over a number
field~$M$ with complex multiplication by an optimally embedded order
$\calO$ of a quadratic imaginary field~$K$. Let~$\gerp$ be a prime
ideal of~$M$ of residue characteristic~$p$, and assume that~$E$ has
good reduction modulo~$\gerp$, denoted~$E'$, and that the conductor
of~$\calO$ is prime to~$p$. Then~$\calO$ is optimally embedded
in~$\End(E')$.

On the other hand, the conductor always becomes smaller when it is
divisible by~$p$. Suppose that $E$ has
supersingular reduction,~$\calO_K = \ZZ[\delta]$ and~$\calO =
\ZZ[pr\delta]$, where~$r\in \ZZ$. One verifies that~$pr\delta$ has
degree divisible by~$p^2$. Since~$E'$ is supersingular any isogeny
of degree~$p^2$ vanishes on~$E'[p]$ and it follows that~$r\delta$
is also an isogeny of~$E'$. It is an interesting situation. Because~$\calO$ is optimally embedded in $\End(E)$, the
kernel of the multiplication-by-$p$ map on the finite flat group
scheme~$\Ker[pr\delta]$ has order~$p$ generically, but order~$p^2$
modulo~$p$.
The same happens in the ordinary case;
see \S~\ref{subsubsec: ordinary elliptic curves}. This example is well-known but is usually proven by other techniques. See, for example \cite[Theorem 5, \S~13.2]{LangEllipticFunctions}.
\end{exa}

\begin{prop} \label{prop: algebraic properties of deformations}
Let $A$ be an abelian variety over an algebraically closed field $k$
of characteristic $p$.
\begin{enumerate}
  \item Let~$\calO \subset \End(A)$ be a set. Let~$R^u$ be the universal formal deformation space of~$A$.
  There is closed subscheme~$Z_\calO$ which is universal for the
  property of extending~$\calO$ to a deformation.
  \item Let~$n$ be an integer. There is a closed subscheme that is
  universal for deformations~$\AA$ of~$A$ such that
 ~$[\End(A):\End(\AA)] \vert p^n$. (The same holds true if we wish to
  work with elementary divisors for the quotient abelian group
 ~$\End(A)/\End(\AA)$.)
\end{enumerate}
\end{prop}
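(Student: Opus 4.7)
The plan is to build $Z_\calO$ out of a one-endomorphism construction and then invoke finiteness of $\End(A)$ as a $\ZZ$-module. For a single $\phi \in \End(A)$, I would first produce a closed formal subscheme $Z_\phi \subseteq \Spf R^u$ with the property that a classifying morphism $\Spf R \to \Spf R^u$ factors through $Z_\phi$ if and only if $\phi$ lifts to an endomorphism of the corresponding deformation $\AA_R$. This is standard obstruction theory for deforming morphisms of abelian schemes: working order by order, an extension of $\phi$ to $\AA_n := \AA_R \otimes R/\germ^{n+1}$ lifts to $\AA_{n+1}$ iff an obstruction class in a suitable finite-dimensional $k$-vector space tensored with $\germ^n/\germ^{n+1}$ vanishes, a closed condition on $R^u$; the inverse limit of these closed conditions cuts out $Z_\phi$. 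Crucially, whenever $\phi$ extends, the extension is unique by Grothendieck's injectivity theorem $\End_R(\AA_R) \injects \End(A)$.

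To handle a general set $\calO$, use that $\End(A)$ is a finitely generated $\ZZ$-module of rank at most $4g^2$; thus the unital subring of $\End(A)$ generated by $\calO$ is also finitely generated as a $\ZZ$-module. Pick $\ZZ$-module generators $\phi_1,\dots,\phi_m$ of this subring. Since the subset of $\End(A)$ consisting of endomorphisms extending to a given deformation $\AA_R$ is closed under addition and composition, the whole of $\calO$ extends iff each $\phi_i$ extends. Hence $Z_\calO := Z_{\phi_1} \cap \cdots \cap Z_{\phi_m}$, formed as the scheme-theoretic intersection in $\Spf R^u$, is a closed formal subscheme with the required universal property.

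For Part (2), the key finiteness input is that $\End(A)$ has only finitely many $\ZZ$-submodules $S$ satisfying $[\End(A):S] \mid p^n$ (the same reasoning applies when the invariant factors of $\End(A)/S$ are prescribed). For each such $S$, Part (1) applied with $\calO = S$ yields a closed formal subscheme $Z_S \subseteq \Spf R^u$. Define $Z := \bigcup_S Z_S$ as the scheme-theoretic union, which is closed since the union is finite. If $[\End(A):\End_R(\AA_R)] \mid p^n$, then taking $S = \End_R(\AA_R)$ places $\AA_R$ in $Z_S$, hence in $Z$; conversely, if $\AA_R \in Z_S$ for some admissible $S$, then $\End_R(\AA_R) \supseteq S$, so $[\End(A):\End_R(\AA_R)]$ divides $[\End(A):S]$, hence divides $p^n$.

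The main obstacle is the obstruction-theoretic input of Part (1): identifying the module in which the obstruction to lifting $\phi$ lives and verifying that its vanishing cuts out a closed subscheme. I would handle this either by appealing to Grothendieck's general results on deformations of homomorphisms of abelian schemes, or, more concretely, by translating the problem via Grothendieck--Messing / Serre--Tate theory into the deformation of the Hodge filtration on the Dieudonn\'e crystal and demanding that $\phi$ preserve it --- which is visibly a closed condition. Once Part (1) is in place, the rest is formal, resting only on the elementary finiteness properties of $\End(A)$ as a $\ZZ$-module.
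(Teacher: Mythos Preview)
Your approach to Part~(1) differs from the paper's. The paper simply cites \cite[Lemma 4.3.5]{Dok} and remarks that one verifies Schlessinger's criteria for the subfunctor ``deformations to which $\calO$ extends''; you instead build $Z_\phi$ for a single $\phi$ via step-by-step obstruction theory (or Grothendieck--Messing, asking that $\phi$ preserve the lifted Hodge filtration) and then intersect over a finite $\ZZ$-module generating set. Both routes are correct. Yours is more constructive and meshes well with the crystalline viewpoint developed later in \S\ref{section: deformation theory}; the Schlessinger route is slicker in that it delivers pro-representability directly without having to name the obstruction space, and it handles the passage from ``closed condition at each finite level'' to ``closed formal subscheme'' automatically.

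For Part~(2) your argument is the same as the paper's (finitely many subrings of bounded index, take the union of the $Z_S$), but your converse direction contains a gap. You prove that if $\AA_R$ lies in $Z_S$ for \emph{some} admissible $S$ then the index divides $p^n$; what ``$Z$ is universal'' requires is that $\AA_R$ lying in the scheme-theoretic union $Z=\bigcup_S Z_S$ forces the index condition. These are not equivalent: a map from $\Spec R$ with $R$ local Artinian into a scheme-theoretic union need not factor through any single component (e.g.\ $x\mapsto\epsilon,\ y\mapsto\epsilon$ lands in $V(xy)\subset \Spf k[\![x,y]\!]$ but in neither $V(x)$ nor $V(y)$). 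The paper's one-line ``follows immediately'' glosses over exactly the same point, so you are not worse off than the original; but if you want $Z$ to genuinely represent the functor $R\mapsto\{\AA/R:[\End(A):\End_R(\AA)]\mid p^n\}$, you must either supply an argument specific to these endomorphism loci or retreat to the weaker assertion that $Z$ is the smallest closed subscheme through which every such deformation factors.
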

\id Proposition \ref{prop: algebraic properties of deformations} is folklore. The first assertion is proven in
\cite[Lemma 4.3.5]{Dok}. The proof consists of verifying
Schlessinger's criteria for pro-representability. 
The second assertion follows immediately from the first given that
there are only finitely many subrings of a given index (let alone
of given elementary divisors) and they are all
finitely generated as $\ZZ$-modules.

The proof of Proposition \ref{prop:basic
estimate on index} is given below, after we review Grothendieck's crystalline
deformation theory.

\subsection{Crystalline deformation theory}\label{subsec: crystalline deformation theory}
Our main reference here is Grothendieck's monograph \cite{Grothendieck}. First recall the notion of \emph{divided
power structure (d.p.)} on a pair $(R, I)$ consisting of a ring
$R$ and an ideal $I\normal R$ (loc. cit. Chapitre IV, \S 1.1).
These are functions $\gamma_n: I \arr I, n = 1, 2, 3, \dots$ that
``behave like" $x^n/n!, n = 1, 2, 3, \dots$, that is, the
following properties hold true: 
\begin{enumerate}
\item $\gamma_1(x) = x$; \item
$\gamma_n(x+y) = \gamma_n(x) + \sum_{i = 1}^{n-1}
\gamma_{n-i}(x)\gamma_i(y) + \gamma_n(y)$; \item $\gamma_n(xy) =
x^n\gamma_n(y)$ for $x\in R, y\in I$; \item $\gamma_m(\gamma_n(x))
= \frac{(mn)!}{(n!)^m m!}\gamma_{mn}(x)$; \item
$\gamma_m(x)\gamma_n(x) = \frac{(m+n)!}{m! n!}\gamma_{m+n}(x)$.
\end{enumerate}

\id The axioms imply the identities \[x^n = n! \gamma_n(x), \qquad x\in I, \; n = 1, 2, 3 \dots.\]
Hence, if $R$ is an integral domain, whose quotient field is of
characteristic $0$, there is at most one d.p. structure on $I$. It is
given by $\gamma_n(x) = x^n/n!$. This d.p. structure is well defined if $x^n/n!\in
I$ for all $x\in I, n=1, 2, 3, \dots$. A divided powers structure
is called \emph{nilpotent} if there is an $N$, such that for any
positive integers $a_1, \dots, a_r$ with $\sum_{i=1}^r a_i \geq N$
and elements $x_1, \dots, x_r $ of $I$, we have $\gamma_{a_1}(x_1)
\gamma_{a_2}(x_2)\cdots \gamma_{a_r}(x_r) = 0$.

\begin{exa}\label{exa: pd structure} Let $p$ be a prime.
Suppose that $I^p = 0$ and that $1, 2, 3, \dots p-1$ are invertible
in $R$, then we may define $\gamma_n (x) = x^n/n!, n = 1, 2, \dots,
p-1$ and $\gamma_n(x) = 0, n\geq p$. This is a nilpotent d.p. structure with
$N=p$. 
\end{exa}
\begin{exa} Let $(R, I)$ be a discrete valuation ring of mixed
characteristic $(0, p)$ and uniformizer $\pi$. We assume that
$\val(p) = 1$ and $\val(\pi) = 1/e$. We have $\pi^n/n! \in I$ if
and only if $n/e \geq (n-s_n)/(p-1)$, where $s_n$ is the sum of
the digits in the $p$-adic development of $n$. See loc. cit. IV \S
1.3.) That is, $\pi^n/n! \in I$ for all $n\geq 1$ iff $e \leq p-1$. 

If $R$ has a d.p. structure, i.e. $e \leq p-1$, then we have an
induced d.p. structure on $(R/I^N, I/I^N)$, which is nilpotent of level $N$ if
$e < p-1$. We say then that the d.p. structure on $(R, I)$ is
topologically nilpotent. The condition $e < p-1$ is necessary for that.

\end{exa}

The theorem that we need is in loc. cit. V \S 4. Following the notation
there, we use $\DD^\ast(A)_S$ to denote the relative de Rham
cohomology $\HH^1_{\rm dR}(A/S)$. It will take us too long to
define the notions of the crystalline site and crystals in
general. For that see loc. cit.. We just note a particular example
of the theorem: Let $S \injects S'$ be a closed immersion of affine schemes, $\Spec(R) \arr \Spec(R')$,
where $R' \arr R$ is a surjective ring homomorphism with kernel $I$, such that $I$ is
equipped with nilpotent d.p.. This is an example of a nilpotent thickening of $S$ by $S'$. For instance, in Schlessinger's theory one
considers the case where the rings $R, R'$, are local rings with maximal
ideals $\germ, \germ'$, respectively, $R' \arr R$ is a local
homomorphism whose kernel is principle, say equal to $(t)$ and
$\germ' t = 0$. Note that this implies that $t^2 = 0$. One then
has a canonical nilpotent d.p. structure on $(t)$ given by
$\gamma_1(x) = x$ and $\gamma_n(x) = 0, n = 2, 3, \dots$.
\begin{thm}
Let $S$ be a scheme and $S'$ nilpotent thickening of $S$ with d.p.
which is locally nilpotent. Consider the natural functor from
abelian schemes over $S'$ to the category of couples $(A, {\rm
Fil}^1)$ of an abelian scheme $A$ over $S$ and a submodule,
locally a direct summand, ${\rm Fil}^1$ of $\DD^\ast(A)_{S'}$,
which is a prolongation of ${\rm Fil}^1\DD^\ast(A)_S = \uomega
_A$. This functor is an equivalence of categories.
\end{thm}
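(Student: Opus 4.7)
\emph{Proof plan.} The functor is manifestly well-defined: an abelian scheme $\mathcal{A}/S'$ has de Rham cohomology $\mathbb{H}^1_{\rm dR}(\mathcal{A}/S') = \DD^\ast(A)_{S'}$ by the crystalline interpretation (the d.p. thickening $S \hookrightarrow S'$ ensures the crystal on $A/S$ canonically takes a value on $S'$), and the Hodge filtration $\uomega_{\mathcal{A}/S'} \subset \mathbb{H}^1_{\rm dR}(\mathcal{A}/S')$ is locally a direct summand and reduces to $\uomega_A$ on $S$. So the content is essential surjectivity plus full faithfulness. My plan is to reduce both to the analogous theorem for $p$-divisible groups.

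The first step would be to reduce to the $p$-divisible group via the Serre--Tate theorem. Since the kernel ideal of $\calO_{S'} \twoheadrightarrow \calO_S$ is nilpotent and $p$-torsion (in the sense relevant to Serre--Tate, because the d.p. structure gives $p^n x^n = n!\gamma_n(x)$ identities which force $p$-nilpotency once $I$ is nilpotent), the functor $\mathcal{A} \mapsto \mathcal{A}[p^\infty]$ gives an equivalence between abelian schemes over $S'$ lifting $A/S$ and $p$-divisible groups over $S'$ lifting $A[p^\infty]/S$. Under this equivalence the de Rham cohomology of $\mathcal{A}$ corresponds canonically to the value of the Dieudonn\'e crystal of $\mathcal{A}[p^\infty]$, and the Hodge filtrations match. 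It therefore suffices to prove: for a $p$-divisible group $G_0/S$, the category of lifts of $G_0$ to $S'$ is equivalent to the category of locally-direct-summand submodules $\mathrm{Fil}^1 \subset \DD^\ast(G_0)_{S'}$ which reduce to $\uomega_{G_0}$ on $S$.

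For full faithfulness, I would argue as follows. A morphism $f:\mathcal{A}_1 \to \mathcal{A}_2$ over $S'$ is determined by its effect on $p$-divisible groups, which by the rigidity of the crystalline Dieudonn\'e functor is recovered from the induced map $\DD^\ast(A_2)_{S'} \to \DD^\ast(A_1)_{S'}$; conversely, any map on $S$ extending to a map of the filtered crystals on $S'$ lifts to a unique morphism of $p$-divisible groups over $S'$, because the obstruction class to lifting an isogeny lives in a group governed by the failure of the map to respect the Hodge filtrations. The key input is Grothendieck's fact that for an object of the crystalline site, morphisms are controlled by their values on any d.p.-thickening.

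The hard part, and the main obstacle, is essential surjectivity: producing a lift from an admissible filtration. My approach would be the Messing construction. Locally one chooses \emph{any} lift $\widetilde{G}/S'$ of $G_0$; this gives a filtration $\uomega_{\widetilde{G}} \subset \DD^\ast(G_0)_{S'}$. The space of all admissible filtrations is then a torsor, locally on $S'$, under $\Hom(\uomega_{G_0}, \mathrm{Lie}(G_0)) \otimes I$, where $I$ is the ideal of the thickening. On the other hand, by the infinitesimal deformation theory of $p$-divisible groups, the set of lifts of $G_0$ is naturally a torsor under the same group (this is where the nilpotent d.p. structure is crucial: it exponentiates the Lie algebra to give genuine group deformations). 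One then checks that the two torsor structures are compatible with the map ``lift $\mapsto$ its Hodge filtration'', giving a bijection on isomorphism classes. Globalizing via descent on $S'$ (which is fine since both sides are sheaves of groupoids) produces the desired equivalence. The subtle technical point throughout is that the d.p. structure is used to make sense of the crystal on $S'$, of exponentials of nilpotent Lie-algebra elements, and of the identification between torsors; without topological nilpotency the universal lift machinery breaks down, which is why the hypothesis is included in the statement.
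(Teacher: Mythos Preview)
The paper does not prove this theorem; it is quoted as a result from Grothendieck's Montreal lectures \cite{Grothendieck}, Chapitre~V, \S4, and no argument is supplied beyond that citation. So there is no ``paper's own proof'' to compare against.

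Your outline is the standard Grothendieck--Messing route and is correct in its architecture: reduce via Serre--Tate to $p$-divisible groups, then identify lifts with admissible Hodge filtrations by the torsor argument. Two small points are worth tightening. First, your justification that $p$ is locally nilpotent on $S'$ (needed for Serre--Tate) via ``the d.p.\ identities force $p$-nilpotency'' is not right as stated: the axioms $x^n = n!\,\gamma_n(x)$ do not by themselves make $p$ nilpotent in $R'$. In Grothendieck's setup one is already working over a base where $p$ is locally nilpotent (the crystalline site is relative to such a base), so this is part of the ambient hypotheses rather than a consequence of the d.p.\ structure. Second, the torsor identification you sketch for essential surjectivity is really Messing's refinement (via the universal vector extension and its exponential) rather than Grothendieck's original argument in \cite{Grothendieck}, which proceeds more directly through crystals; but Messing's version is the one everyone uses and your description of it is accurate.
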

\begin{exa}
Let $K$ be a quadratic imaginary field and $\calO_{K, m}$ be the
order of conductor~$m$ in~$K$ and say $p^a \| m$, $m= p^an$. Let $E$ be a
superspecial elliptic curve over $\bar{\FF}_p$ with an action of
$\calO_{K,n}$. One may wish to calculate the deformations of $E$ to which
the action of the subring $\calO_{K, m}$ of $\calO_{K, n}$ extends. (Note that this is the
general situation by Example~\ref{exa: supersingular}.)
Unfortunately, such a calculation is not accessible via crystalline
deformation theory. For example, consider such deformations to
characteristic zero that are defined over a d.v.r. $R$ with d.p..
Every such deformation $\EE$ defines then a submodule of
$H^1_{\text crys}(E/R) = H^1_{\text crys}(E/W(\fpbar))\otimes R$,
which is a direct summand of rank $1$ extending the Hodge-de Rham
filtration on $H^1_{\rm dR}(E/\fpbar)$. We assume such a
deformation exists, which means that there are two embeddings
$\iota_1, \iota_2:\calO_{K,n} \arr R$, the first induced from the
action of $\calO_{K, n}$ on the tangent space and the second is its
Galois twist. We have $H^1_{\text crys}(E/R) = \calO_{K,n} \otimes_\ZZ
R \injects R\oplus R$ by $(\iota_1\otimes 1, \iota_2 \otimes 1)$.
If $p\neq 2$ is unramified this is an isomorphism of rings and
under this isomorphism the order of conductor $m$ is sent to the
subring $\calO_a:= \{(x, y) \in R\oplus R: x \equiv y
\pmod{p^a}\}$, generated as an $R$-module by $(1, 1), (p^a,
-p^a)$. A direct summand $R$-module of rank $1$ of $R^2$ is given
by $(x, y)$ with either $x$ or $y$ a unit. To be preserved under
$\calO_a$ we must have $x=0$ or $y=0$. Thus, we see that there is
a unique deformation for which the action of $\calO_a$ extends, and then also
$\calO_0$ acts. The conclusion is that elliptic curves over a
finite extension of $\QQ_p$ on which $\calO_{K, m}$ acts optimally
are not defined over a base affording d.p.. That is, the
ramification index is at least $p$. Of course the theory of
complex multiplication and class field theory give more precise
results. It remains an interesting problem to actually calculate the closed 
subscheme of the deformation space of $E$ to which the action of $\calO_{K, m}$ extends.
\end{exa}

\subsection{Proof of Proposition \ref{prop:basic estimate on
index}} We remark that there are many cases where $\geri(R)=1$.  An obvious example is when~$R =
k[\epsilon]$ and we take the constant deformation~$\AA = A \otimes_k
k[\epsilon]$. A more interesting example can be given in the case of
ordinary abelian varieties, see \S\ref{subsec: ordinary AV}.

Let~$(R, \germ_R)$ be a local artinian ring with residue field~$k =
R/\germ_R$. Let~$n_R$ be the minimal positive integer such that
$\germ_R^{n_R} = 0$, as before. We define successively rings
\[R_0 = R/\germ_R, \quad
R_1 = R/\germ_R^{1+(p-1)}, \quad R_2 = R/\germ_R^{1+2(p-1)}, \cdots, \quad
R_\ell = R/\germ_R^{1+\ell(p-1)},\] where~$\ell = \lceil (n_R -
1)/(p-1)\rceil.$ There are canonical surjections
\[ R_\ell \surjects R_{\ell-1} \surjects \dots \surjects R_{1} \surjects
R_{0},\] and we let~$I_j = \germ_R^{1+ (j-1)(p-1)}/\germ_R^{1+
j(p-1)}$,~$j = 1,2, \dots, \ell$, be the kernel of the surjection
$R_j \arr R_{j-1}$. We note that~$I_j^p = 0$ in~$R_j$ and hence
the morphism
\[ \Spec(R_{j-1}) \injects \Spec(R_j), \]
is a nil-immersion with canonical divided powers structure
as in Example~\ref{exa: pd
structure}. Let~$t_R$ be the minimal power of~$p$ such
that~$p^{t_R}\in \germ_R^{p-1}$. Then~$p^{t_R} I_j = 0$ in~$R_j$.

Now, by arguing inductively on~$j$, we reduce to the following
situation. Let~$A \arr \Spec(R_{j-1})$ be an abelian scheme of
relative dimension~$g$ and let~$\AA \arr \Spec(R_j)$ a deformation
of it. We need to show that~$[\End(\AA):\End(A)]$ is finite and is
equal to a power of~$p$. By crystalline deformation theory, the
closed immersion of abelian schemes~$A \injects \AA$ corresponds
functorially to a diagram
\[\xymatrix@R = 2pt{R_j^{2g} \ar@{>>}[rr]&& R_{j-1}^{2g}  = \HH^1_{\rm dR}(A/R_{j-1}) \\ \cup & & \cup \\
\omega_{j} \ar@{>>}[rr] & & \omega_{j-1}  = H^0(A, \Omega^1_{A/R_{j-1}})   }
\]
where~$\omega_{j-1}, \omega_{j}$ are free~$R$-modules that are
rank~$g$ direct summands of~$R_{j-1}^{2g}$ and~$R_{j}^{2g}$,
respectively. In particular, an endomorphism~$f\in \End(A)$ acts
canonically and compatibly on~$R_{j-1}^{2g}$ and~$R_{j}^{2g}$ and
preserves~$\omega_{j-1}$. It extends to an endomorphism of~$\AA$
if and only if it preserves~$\omega_j$. Consider then~$p^{t_R}f$.
Let~$x \in \omega_j$ and choose a~$y\in \omega_j$ such that~$f(x)
= y \pmod{I_j}$, i.e. equality holds between the images of~$f(x)$
and $y$ in~$\omega_{j-1}$. Then~$f(x) - y$ is in the kernel of the
homomorphism~$\omega_{j} \arr \omega_{j-1}$, which is certainly
contained in~$I_jR_j^{2g}$. Since~$p^{t_R} I_j = 0$, we conclude
that~$p^{t_R}f(x) - p^{t_R}y = 0$ and so~$p^{t_R}f(x) \in
\omega_j$.

We note that the same reasoning gives that if $s\cdot f$ extends to an
endomorphism of~$\AA$ and~$(p, s) = 1$ then~$f$ also extends,
because~$s$ is invertible in~$R$. This can also be concluded from
the Serre-Tate theory that gives~$\End(\AA) = \{ f\in
\End(\AA[p^\infty]): f\vert_{A[p^\infty]} = g\vert_{A[p^\infty]} \text{ for some } g\in \End(A)\}$,
namely, the endomorphisms of~$\AA$ are the endomorphisms of its
$p$-divisible group whose restriction to the~$p$-divisible group of
$A$ is induced from a bona fide endomorphism of~$A$.

We have~$r-1$ appearing in the power of~$p$ in the statement of the
proposition, namely there is ``a saving of~$1$", because~$\ZZ\subseteq
\End(\AA)$ and is a direct summand in it (as an abelian group).

\subsection{Ordinary abelian varieties}\label{subsec: ordinary AV} Strictly speaking, the following is not needed for the main results of our paper, as we shall need to consider supersingular abelian varieties. It is included here for the sake of completeness.
Our main reference is Katz's paper \cite{Katz}. 

Let~$k$ be an
algebraically closed field of characteristic~$p$ and let~$A$ be an
ordinary abelian variety over~$k$. We let $T_p(A) = \ilim A[p^n](\fpbar)$ and $V_p(A)  = T_p(A) \otimes_\ZZ \QQ$. Then the deformations of~$A$
are pro-represented by a formal torus over the Witt vectors
of~$k$, $\widehat{\GG}_m^{g^2}\arr {\rm Spf}(W(k))$. One fixes
isomorhisms~$T_p(A) \cong \ZZ_p^g$ and~$T_p(A^t) \cong \ZZ_p^g$.
The deformations~$\AA\arr \Spec (R)$ of~$A$ to a local artinian
ring~$R$ with residue field $k$ are in functorial bijection with
\[ \Hom(T_p(A) \otimes_{\ZZ_p} T_p(A^t) , \widehat{\GG_m}(R)) =
\Hom(\ZZ_p^g \otimes_{\ZZ_p} \ZZ_p^g, 1 + \germ_R), \] and so can
viewed as bilinear forms on~$\ZZ_p^g \times \ZZ_p^g$ with values in
the multiplicative group~$1+ \germ_R$. We denote the bilinear form
corresponding to a deformation~$\AA \arr \Spec(R)$ of~$A$ by
$\langle\;, \; \rangle_\AA$. In particular, an endomorphism~$f\colon A
\arr A$ extends to~$\AA$ if and only if
\[ \langle fx, y\rangle_\AA = \langle x, f^ty\rangle_\AA , \]
where we use~$f$ to denote also the endomorphism of~$\ZZ_p^g$
induced from~$f$ via the chosen identification~$\ZZ_p^g \cong
T_p(A)$, and similarly for $f^t\colon A^t \arr A^t$.

The canonical lift of~$A$ to~$R$ is the deformation~$\AA$ such
that $\langle\;, \; \rangle_\AA$ is the trivial pairing
(identically~$1$) and we see the well-known fact that for this
deformation~$\End(A) = \End(\AA)$ and so $\geri(R) = 1$. We also
see that if~$p^a$ is the exponent of the multiplicative group~$1 +
\germ_R$ then if~$f\in \End(A)$ then~$p^af$ extends to any
deformation of~$A$ to~$R$.

Let us assume that~$A$ is a simple ordinary abelian variety with
complex multiplication. This is the case for example if~$A$ is
simple and defined over~$\fpbar$. In this case,~$\End^0(A)$ is a CM
field~$K$; let~$\calO\subset K$ be the order optimally
embedded in~$\End(A)$. Since the action of~$\calO$ lifts to the
canonical lift of~$A$ and so to characteristic zero, it follows
from the theory of complex abelian varieties that~$f^t$ is just
given by $\bar{f}$ (complex conjugation applied to~$f$) if one chooses any polarization to identify $V_p(A)$ with $V_p(A^t)$. We find
that~$f$ extends to a deformation~$\langle\;, \; \rangle_\AA$ if
and only if
\[ \langle fx, y \rangle_\AA = \langle x, \bar{f}y \rangle_\AA .\]
For a fixed~$f$ this is a linear equation in the matrix coefficients
of~$\langle\;, \; \rangle_\AA$. In general, this can be used to
explicitly determined the closed subscheme~$Z_\calO$ and one sees
that they come out formal sub-tori. To illustrate we consider the
one-dimensional case.

\subsubsection{Ordinary elliptic curves} \label{subsubsec: ordinary elliptic curves}
In this case~$\langle\;, \; \rangle_\AA$ is just an
element~$\langle 1, 1 \rangle_\AA=q_\AA \in 1 + \germ_R$. We claim
that since~$p$ is split in~$K$, the identification of~$f$ with an
element of~$\ZZ_p$ (viewed as a homomorphism of~$\ZZ_p\cong
T_p(A)$) is just viewing $f$ as an element of~$\ZZ_p$ via the
embedding~$K \arr \QQ_p$ determined by one of the prime ideals
of~$K$ above~$p$ (it would not matter which). An endomorphism~$f$
extends to this deformation if and only if~$q_\AA^{f - \bar{f}} =
1$. If the order of~$q_\AA$ is $p^a$ then this says that~$f -
\bar{f}$ is divisible by~$p^a$. Thus, $\End(\AA)$ is the
intersection of~$\End(A)$ with the order of conductor~$p^a$
in~$K$.

In particular, we see that if~$\End(A) = \calO_K$ and the exponent
of~$1+\germ_R$ is~$p^b$ then for every~$0 \leq a \leq b$ there is a
deformation~$\AA$ with~$\End(A)$ the order~$\calO_{K,p^a}$ of
conductor~$p^a$. Conversely, the set of~$q\in 1 + \germ_R$ to which
$\calO_{K,p^a}$ extend is defined by the equation~$q^{p^a} = 1$.
These are the closed sets appearing in Proposition~\ref{prop:
algebraic properties of deformations}. This is of interest: if~$R$
is a dvr of mixed characteristic and we are trying to find a
deformation~$\AA$ of~$A$ to~$R$ such that $\End(\AA) = \calO_{K,
p}$, say, then we must introduce ramification. We need a~$p$-th root
of unity. In fact, a closer look reveals that for an elliptic curve
$E/\overline{\FF}_p$ with CM by $\calO_K$, there are precisely $p^a
- p^{a-1}$ deformations $\EE$ to characteristic zero such that
$\calO_{K, p^a}$ is optimally embedded in $\End(\EE)$. They are
provided by the primitive $p^a$-roots of unity. The fact that there
is a unique deformation for which we get an action of $\calO_K$ implies
that no two singular moduli for $\calO_K$ are congruent modulo $p$.
The relation between the class numbers $h, h_{p^a}$, of $\calO_K$ and
$\calO_{K, p^a}$ respectively, is (assume for simplify that $K\neq \QQ(i),
\QQ(\omega)$):  $h_{p^a} = h \times (p^a - p^{a-1})$ (cf.
\cite[Theorem 7, \S 8.1]{LangEllipticFunctions}) and so we conclude that for every
elliptic curve $\EE$ over a $p$-adic ring with action of $\calO_{K,
p^a}$ the reduction has action by $\calO_K$, as in the supersingular
case. This is classical (see \cite[Theorem 5, \S~13.2]{LangEllipticFunctions}). Note
also that we get very precise information about the $p$-adic
completion of the field generated by the singular moduli for
$\calO_{K, p^a}$ and so about the ramification at $p$ of this field.

\subsection{Supersingular elliptic curves} Let~$k = \overline{\FF}_p$. Let~$V$ be
a complete dvr containing the completion of the maximal unramified extension~$W(k)$ of
$\ZZ_p$ and of ramification index~$e_V < p-1$. Then~$V \arr k$ has
topologically nilpotent divided powers coming from~$\gamma_n(x) = x^n/n!$ in~$V$.
In fact, using results of Zink (see remarks on page 6 of \cite{Zink}), it is enough to assume
that~$e_V \leq p-1$ and so that~$V$ has divided powers structure
(not necessarily nilpotent). The advantage is that~$p=2$ is allowed
too, as long as it is unramified.

Let~$E/k$ be a supersingular elliptic curve. Recall that~$\End(E)$
is a maximal order in the rational quaternion algebra~$B_{p,
\infty}$ ramified only at~$p$ and~$\infty$. We apply Grothendieck's
crystalline deformation theory to study for a deformation~$\EE/R$ of
$E$ the index~$[\End(E):\End(\EE)]$.

\begin{lem}
The following holds:
\begin{enumerate}
  \item~$[\End(E):\End(\EE)] = [\End(E)\otimes \ZZ_p:\End(\EE)\otimes
  \ZZ_p]$.
  \item~$\End(E)\otimes \ZZ_p \cong \left\{\left(\begin{smallmatrix}
  a & b \\ pb^\sigma & a^\sigma
  \end{smallmatrix} \right): a, b \in W(\FF_{p^2})\right\} = : D$, where
 ~$\sigma$ is the Frobenius automorphism.
  \item There is a basis~$\{e_1, e_2\}$ of~$H^1_{\rm crys}(E/W(k))$ with respect to which the
  action of~$\End(E)$ is given as matrices as in \emph{(2)}.
\end{enumerate}
\end{lem}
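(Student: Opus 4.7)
My plan is to treat the three parts in order, with all the real content concentrated in part (2).

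For part (1), I would invoke Proposition~\ref{prop:basic estimate on index} to conclude that the abelian group quotient $\End(E)/\End(\EE)$ is a finite $p$-group, hence canonically isomorphic to its own $p$-adic completion. Applying $-\otimes_{\ZZ}\ZZ_p$ to the short exact sequence
\[ 0 \arr \End(\EE) \arr \End(E) \arr \End(E)/\End(\EE) \arr 0 \]
preserves exactness by flatness of $\ZZ_p/\ZZ$ and leaves the finite quotient unchanged, giving the stated equality of indices.

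For part (2), the plan is to use Dieudonn\'e theory together with the identification $\End(E)\otimes\ZZ_p\cong \End(E[p^\infty])$ coming from the fact that $E$ is supersingular (so $\End(E[p^\infty])$ is a maximal $\ZZ_p$-order in $B_{p,\infty}\otimes_\QQ \QQ_p$, and the global endomorphism ring is already a maximal $\ZZ$-order). Let $M = \DD^\ast(E[p^\infty])$ be the (contravariant) Dieudonn\'e module, a free $W(k)$-module of rank $2$ with a $\sigma$-semilinear Frobenius $F$ satisfying $F^2=p$. Since every supersingular elliptic curve over $\fpbar$ descends to $\FF_{p^2}$ and all such curves are isogenous, their Dieudonn\'e modules become isomorphic after extending scalars to $W(k)$, so one may choose a basis $\{e_1,e_2\}$ in which the semilinear operator $F$ is represented by the matrix $\Phi = \left(\begin{smallmatrix} 0 & 1\\ p & 0\end{smallmatrix}\right)$, i.e.\ $F(e_1)=pe_2$ and $F(e_2)=e_1$. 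A $W(k)$-linear endomorphism represented by a matrix $M = \left(\begin{smallmatrix} a & b\\ c & d\end{smallmatrix}\right)$ commutes with $F$ precisely when $M\Phi = \Phi M^\sigma$, where $M^\sigma$ denotes entrywise application of $\sigma$. Equating the two matrices yields $c = pb^\sigma$, $d = a^\sigma$, together with $a^{\sigma^2}=a$ and $b^{\sigma^2}=b$, i.e.\ $a,b\in W(\FF_{p^2})$. This is exactly the ring $D$ in the statement, and one checks that $D$ is indeed a maximal $\ZZ_p$-order in the quaternion division algebra over $\QQ_p$, which forces the inclusion $\End(E)\otimes\ZZ_p\injects \End(M,F)=D$ to be an equality.

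For part (3), I would use the canonical identification of $H^1_{\rm crys}(E/W(k))$ with the (contravariant) Dieudonn\'e module of $E[p^\infty]$, compatible with the respective actions of $\End(E)$ (self-duality of $E$ means the covariant/contravariant choice is immaterial for the matrix shape). The basis $\{e_1,e_2\}$ produced in part (2) then transports directly to a basis of $H^1_{\rm crys}(E/W(k))$ with the stated property. The only real obstacle is bookkeeping: one must track the $\sigma$-semilinearity of $F$ carefully to get the $p$ into the lower-left corner (rather than the upper-right), and one must invoke the descent of $E$ to $\FF_{p^2}$ to ensure that the centralizer lives over $W(\FF_{p^2})$ and not merely $W(k)$.
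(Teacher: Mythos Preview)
Your proposal is correct and follows essentially the same route as the paper: invoke the fact that the index is a $p$-power for (1), choose a basis in which Frobenius has matrix $\left(\begin{smallmatrix}0&1\\p&0\end{smallmatrix}\right)$ and compute the centralizer condition $M\Phi=\Phi M^\sigma$ for (2), and transport the basis via the Dieudonn\'e--crystalline identification for (3). The only cosmetic difference is that the paper justifies $\End(E)\otimes\ZZ_p\cong\End(E[p^\infty])$ by Tate's theorem at $p$ together with centrality of the Galois action, whereas you argue that both sides are maximal $\ZZ_p$-orders in the same quaternion algebra; either argument suffices, and once that isomorphism is in place the Dieudonn\'e equivalence already gives $\End(E[p^\infty])=\End(M,F)=D$ on the nose, so your final maximality check is redundant (though harmless).
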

\begin{proof} The first claim holds, because by Proposition~\ref{prop:basic estimate on
index} the index is a power of~$p$. To prove the rest, we note
that $E$ can be defined over $\FF_{p^2}$ and so $H^1_{\rm
crys}(E/W(k))$ has a basis $e_1, e_2$ defined over $\FF_{p^2}$
such that the $\sigma$-linear Frobenius map is given by the matrix
$\left(\begin{smallmatrix} 0 & 1 \\ p & 0
\end{smallmatrix} \right)$ with respect to this basis. Now, we
have $\End(E) \otimes \ZZ_p \cong \End(E[p^\infty])$ (this uses
Tate's theorem at $p$ plus the fact that the Galois action, being
in the commutant of the quaternion algebra $\End^0(E)$ is
central), which is in turn isomorphic to the endomorphisms of
$H^1_{\rm crys}(E/W(k))$ commuting with $\left(\begin{smallmatrix}
0 & 1 \\ p & 0
\end{smallmatrix} \right)$. The condition then comes out
\[ \begin{pmatrix}
a & b \\ c & d
\end{pmatrix}
\begin{pmatrix}
0 & 1 \\ p & 0
\end{pmatrix}
 =
\begin{pmatrix}
0 & 1 \\ p & 0
\end{pmatrix}
 \begin{pmatrix}
a^\sigma & b^\sigma \\ c^\sigma & d^\sigma
\end{pmatrix},\]
i.e.,
\[ \begin{pmatrix}
pb & a \\ pd & c
\end{pmatrix} = \begin{pmatrix}
c^\sigma & d^\sigma \\ pa^\sigma & pb^\sigma
\end{pmatrix},\]
from which now both (2) and (3) follow.
\end{proof}

\begin{prop}\label{prop: condition to extend endo in ss case}
In the basis~$\{ e_1, e_2\}$ the Hodge filtration on~$H^1_{\rm
dR}(E/k)$ is given by the image of the span of~$e_1$ in~$H^1_{\rm
crys}(E/W(k))$.

Let~$n$ be a positive integer. A deformation~$\EE$ of~$E$ to~$R := V/\germ_V^n$, equipped with its canonical divided powers
structure is given by the span of a
vector in~$R^2 = H^1_{\rm dR}(E/R)$of the form~$(1, y)$ with~$y\in
\germ_R$ and so we denote it $\EE_y$. In particular, an element~$\left(\begin{smallmatrix}
  a & b \\ pb^\sigma & a^\sigma
  \end{smallmatrix} \right)$ in~$\End(E)\otimes \ZZ_p$ extends to
  the deformation~$\EE_y$ if and only if
  \[ \left(\begin{smallmatrix}
  a & b \\ pb^\sigma & a^\sigma
  \end{smallmatrix} \right)\left(\begin{smallmatrix}
  1 \\ y
  \end{smallmatrix}\right) \in \Span_R\langle \left(\begin{smallmatrix}
  1 \\ y
  \end{smallmatrix}\right) \rangle.\]
\end{prop}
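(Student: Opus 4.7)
The plan is to prove the three assertions in order, each being essentially a direct application of Grothendieck's crystalline deformation theorem (together with the explicit description of the crystalline Dieudonné module of $E$) recalled in \S\ref{subsec: crystalline deformation theory}.

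For the first assertion, the key observation is that the Hodge filtration on $H^1_{\rm dR}(E/k)$ is characterized intrinsically by the Frobenius: it equals the kernel of the $\sigma$-linear endomorphism $F$ reduced modulo $p$. With respect to the basis $\{e_1, e_2\}$ fixed in the lemma, $F$ has matrix $\left(\begin{smallmatrix} 0 & 1 \\ p & 0 \end{smallmatrix}\right)$, so modulo $p$ it sends $e_1 \mapsto 0$ and $e_2 \mapsto e_1$. Hence $\ker(F \bmod p) = \langle \bar e_1 \rangle$, which is $\omega_E = H^0(E, \Omega^1_{E/k})$.

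For the second assertion, I would invoke the equivalence of categories recalled in \S\ref{subsec: crystalline deformation theory}: deformations of $E$ to $R = V/\germ_V^n$ are in bijection with rank-one direct summands of $H^1_{\rm crys}(E/W(k)) \otimes_{W(k)} R \cong R^2$ lifting the Hodge filtration on $H^1_{\rm dR}(E/k)$. The hypothesis $e_V \leq p-1$ is exactly what makes the canonical divided powers on $\germ_V$ (and hence on the kernel of $R \twoheadrightarrow k$) admissible; topological nilpotence needed to invoke the theorem is either automatic when $e_V < p-1$ or follows from Zink as cited. By part (1), the filtration to lift is $\langle \bar e_1 \rangle \subset k^2$. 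A rank-one direct summand of $R^2$ is generated by $(\alpha,\beta)$ with at least one of $\alpha, \beta$ a unit; it reduces to $\langle \bar e_1 \rangle$ iff $\alpha \in R^\times$, and after scaling we may write the generator uniquely as $(1, y)$ with $y \in \germ_R$. This gives the parametrization $\EE \leftrightarrow \EE_y$.

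For the third assertion, the same equivalence of categories is functorial in morphisms, so under the bijection in (2) the ring $\End(\EE_y)$ consists of precisely those endomorphisms of $E$ whose induced action on $H^1_{\rm crys}(E/W(k)) \otimes R$ preserves the rank-one submodule $\langle (1,y) \rangle$. Combining with the description in the preceding lemma that $\End(E) \otimes \ZZ_p$ acts on $H^1_{\rm crys}(E/W(k))$ via matrices of the form $\left(\begin{smallmatrix} a & b \\ pb^\sigma & a^\sigma \end{smallmatrix}\right)$, the asserted criterion follows immediately. The only point that requires a bit of care is that preserving $\langle (1,y) \rangle$ as an abstract $R$-submodule is the same as the image of the generator lying in this span, which is why the condition can be stated as displayed. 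No genuinely hard step is anticipated; the whole proposition is an unpacking of Grothendieck's theorem in the explicit basis provided by the crystalline structure of a supersingular elliptic curve.
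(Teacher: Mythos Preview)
Your proposal is correct and is precisely the unpacking the paper has in mind: the paper's entire proof reads ``(The proof is straightforward.)'', and what you have written is the natural elaboration via Grothendieck's crystalline deformation theorem together with the explicit Frobenius matrix from the preceding lemma. There is nothing to compare; your argument is the intended one.
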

\id (The proof is straightforward.)

\begin{thm}\label{thm: bounds on indices of endo rings under deformation}
Let~$V$ be as in Proposition~\ref{prop: condition to extend endo in
ss case}. Then
\[ p^{2(\lceil n/e_V\rceil -
1)} \leq \geri(V/\germ_V^n) \leq \gerI(V/\germ_V^n) \leq
p^{3(n-1)}.\]
Furthermore, these bounds are optimal.
\end{thm}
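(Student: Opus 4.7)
The plan is to parametrize deformations $\EE$ of $E$ to $R=V/\germ_V^n$ by elements $y\in \germ_V/\germ_V^n$ via Proposition~\ref{prop: condition to extend endo in ss case}: an endomorphism $f=\left(\begin{smallmatrix} a & b\\ pb^\sigma & a^\sigma\end{smallmatrix}\right) \in D:=\End(E)\otimes\ZZ_p$ extends to $\EE_y$ precisely when
\[ \Phi_y(f) \;:=\; pb^\sigma + (a^\sigma-a)y - by^2 \;\equiv\; 0 \pmod{\germ_V^n}. \]
Since $\Phi_y\colon D\to V/\germ_V^n$ is $\ZZ_p$-linear, $\End(\EE_y)\otimes\ZZ_p=\Ker(\Phi_y)$, and the index of interest equals $|\Ima(\Phi_y)|$, so the theorem reduces to two-sided bounds on this image.

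For the upper bound, I verify directly that $p^{n-1}f$ extends to every $\EE_y$: the three terms $p^nb^\sigma$, $p^{n-1}(a^\sigma-a)y$, and $p^{n-1}by^2$ appearing in $\Phi_y(p^{n-1}f)$ each have $V$-valuation at least $n$, using $e_V\geq 1$ and $y\in\germ_V$. Hence $\ZZ_p+p^{n-1}D \subseteq \End(\EE_y)\otimes\ZZ_p$, and since $D$ is $\ZZ_p$-free of rank four with $\ZZ_p$ a direct summand, $[D:\ZZ_p+p^{n-1}D]=p^{3(n-1)}$, which gives the upper bound.

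For the lower bound, assume $p$ odd and fix $\tau\in W(\FF_{p^2})$ with $\tau^\sigma=-\tau$ (the case $p=2$ is analogous with $\tau^\sigma=1-\tau$). The four $\ZZ_p$-basis elements of $D$ coming from $(a,b)=(1,0),(\tau,0),(0,1),(0,\tau)$ have $\Phi_y$-images $0,\,-2\tau y,\,p-y^2,\,-\tau(p+y^2)$ respectively, so $\Ima(\Phi_y)=\ZZ_p\langle \tau y,\,p-y^2,\,\tau(p+y^2)\rangle$ in $V/\germ_V^n$. Writing $\ell:=\lceil n/e_V\rceil$, the key step is to show this submodule has order at least $p^{2(\ell-1)}$. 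In the principal case $2\val_V(y)\geq e_V$ one has $p\pm y^2=p\cdot u_\pm$ for units $u_\pm\equiv 1\pmod{\germ_V}$, so $p-y^2$ and $\tau(p+y^2)$ reduce to $p$ and $\tau p$ in $\germ_V^{e_V}/\germ_V^{e_V+1}$; an induction up the $\germ_V$-adic filtration of $V/\germ_V^n$, using that $1$ and $\tau$ are $\FF_p$-independent in $\FF_{p^2}$, shows that $\ZZ_p\langle p-y^2,\tau(p+y^2)\rangle$ has order precisely $p^{2(\ell-1)}$. When $2\val_V(y)<e_V$, the generator $\tau y$ carries smaller valuation and contributes a $\ZZ_p$-direction of larger individual order; a parallel graded-piece analysis again yields the required bound.

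Optimality of the lower bound is realized by $y=0$: then $\Phi_0(f)=pb^\sigma$ and $\End(\EE_0)\otimes\ZZ_p=\{(a,b):v_p(b)\geq \ell-1\}$ has index exactly $p^{2(\ell-1)}$. For the upper bound one constructs $V$ with $e_V$ large relative to $n$ and $y$ of small valuation so that the three generators $\tau y,\,p-y^2,\,\tau(p+y^2)$ are $\ZZ_p$-independent in $V/\germ_V^n$ of maximal individual order $p^{n-1}$. The main technical obstacle is the filtration-induction in the lower bound, particularly the resonance case $2\val_V(y)=e_V$ where $p-y^2$ may vanish to order higher than $e_V$ and the inductive step requires careful choice of which graded pieces of $V/\germ_V^n$ carry the image.
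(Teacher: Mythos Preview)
Your overall setup --- parametrizing deformations by $y\in\germ_R$ and computing the index as $|\Ima(\Phi_y)|$ for the $\ZZ_p$-linear map $\Phi_y:D\to R$ --- is exactly the paper's approach. Your direct verification that $p^{n-1}D\subset\Ker\Phi_y$ is a clean way to get the upper bound (the paper only sketches this, referring back to the proof of Proposition~\ref{prop:basic estimate on index}), and your lower bound analysis by cases on $\val_V(y)$ versus $e_V/2$ is the same case split the paper carries out, phrased via graded pieces rather than valuations of linear combinations. Your $y=0$ example for optimality of the lower bound is also correct and simpler than the paper's appeal to Gross's canonical-lifting results.

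The genuine gap is in your optimality argument for the \emph{upper} bound. You propose taking $e_V$ large relative to $n$, but this cannot work. First, the standing hypothesis on $V$ (inherited from Proposition~\ref{prop: condition to extend endo in ss case}) is $e_V\leq p-1$, so $e_V$ is bounded once $p$ is fixed. More fundamentally, the exponent of the $\ZZ_p$-module $R=V/\germ_V^n$ is $p^{\lceil n/e_V\rceil}$, so no element of $R$ can have $\ZZ_p$-order exceeding this; in particular, for $e_V\geq 2$ and $n\geq 4$ one has $\lceil n/e_V\rceil<n-1$ and your three generators cannot possibly each have order $p^{n-1}$. The upper bound can only be attained when $e_V=1$, and even then the choice of $y$ is delicate: the paper takes $V$ unramified over $\QQ_p$ and $y=p\zeta$ for a suitable root of unity $\zeta\notin W(\FF_{p^2})$, and uses Hensel's lemma together with Krasner's lemma to rule out any $\ZZ_p$-relation among $y^2-p$, $\alpha(y^2+p)$, $\alpha y$ below level $p^{n-1}$. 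Without an argument of this type, you have not shown the upper bound is sharp.
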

\begin{proof}
Let~$R = V/\germ_V^n$ and~$D_y = \left\{\left(\begin{smallmatrix}
  a & b \\ pb^\sigma & a^\sigma
  \end{smallmatrix} \right): a, b \in W(\FF_{p^2}), by^2 + (a - a^\sigma)y - pb^\sigma \equiv 0
  \pmod{\germ_V^n}\right\}$. Note that
  \[ \left(\begin{smallmatrix}
  a & b \\ pb^\sigma & d
  \end{smallmatrix} \right)\left(\begin{smallmatrix}
  1 \\ y
  \end{smallmatrix}\right) \in \Span_R\langle \left(\begin{smallmatrix}
  1 \\ y
  \end{smallmatrix}\right) \rangle \Leftrightarrow by^2 + (a - a^\sigma)y - pb^\sigma \equiv 0
  \pmod{\germ_V^n}, \]
  and so~$D_y$ is a ring, identified with~$\End(\EE_y) \otimes
  \ZZ_p$. We note that the map
  \[ \varphi: D \arr R, \qquad \left(\begin{smallmatrix}
  a & b \\ pb^\sigma & a^\sigma
  \end{smallmatrix} \right) \mapsto by^2 + (a - a^\sigma)y -
  pb^\sigma, \]
is a~$\ZZ_p$-linear map whose kernel is
$D_y$. We shall give a lower bound on $[D: D_y]$ by bounding $\sharp D/D_y  = \sharp \varphi(D)$  from below.

Suppose that $p\neq 2$. Let~$\{1, \alpha\}$ be a~$\ZZ_p$ basis to~$W(\FF_{p^2})$ such that
$\alpha^\sigma = -\alpha$ and~$\alpha$ is a
unit. We normalize the $p$-adic valuation so that $\val(p) = 1$. If
$A, B \in \ZZ_p$ then~$\val(A + B\alpha) = \val(A - B\alpha) =
\min\{ \val(A), \val(B)\}$. We note that
\[\varphi(D) = \Span_{\ZZ_p}\{ y^2 - p, \alpha(y^2 + p), \alpha
y\}. \] Consider the linear combination
\[ C(A, B) = A(y^2 - p) + B\alpha(y^2+p), \qquad A, B \in \ZZ_p.\]
We note that
\[\val(C(A, B)) < \gamma:= \frac{n}{e_v} \; \Longrightarrow \; C(A, B) \neq 0  \quad \text{(in $R= V/\germ_V^n$)}.\] Let $y$ denote also some lift of $y\in R$ to $V$.
We distinguish cases:
\begin{enumerate}
\item $\val(y) > 1/2$.$\quad $ We write
\[ C(A, B) = y^2(A + B\alpha) - p(A - B\alpha).\]
Since $\val(A+B\alpha) = \val(A - B\alpha)$ and $\val(y^2) >1$, we find that
\[\val(C(A, B)) = 1 + \min\{\val(A), \val(B).\]
It follows that as long as either $\val(A)$ and $\val(B)$ are both less than $\gamma-1$, or, equivalently, less or equal to $\lceil \gamma \rceil -2$, we have $C(A, B) \neq 0$. Equivalently, the group homomorphism
\[ \ZZ/p^{\lceil \gamma \rceil -1} \times \ZZ/p^{\lceil \gamma \rceil -1} \Arr R, \qquad (A, B) \mapsto C(A, B), \]
is injective. We conclude that $\sharp \varphi(D) \geq p^{2(\lceil \gamma \rceil -1)}$.
\item $\val(y) < 1/2$. $\quad$ In this case $\val(y^2) < 1$ and so we find that $\val(C(A, B)) = \val(y^2) + \min \{ \val(A), \val(B)\} < 1 + \min \{ \val(A), \val(B)\}$ and we get the same estimate (we do not bother with improving it).
\item $\val(y) = 1/2$. $\quad$ In this case we note that either $\val(y^2 - p) = 1$ or $\val(y^2+p) = 1$. So, either $\val(y^2 - p) = 1$ or $\val(\alpha(y^2+p)) = 1$. We assume that $\val(y^2 - p) = 1$, as the other case is entirely similar. In this case we consider the linear combination
\[ D(A, B) = A(Y^2-p) + B\alpha y, \qquad A, B\in \ZZ_p.\]
Since $\val(A(y^2 - p)) = \val(A) + 1$ and $\val(B\alpha y) = \val(B) + 1/2$ and, in particular, are never equal, we find that
\[ \val(D(A, B)) = \min \{ 1 + \val(A), 1/2 + \val(B)\},\]
and, as long as $\val(A) < \gamma - 1$ or $\val(B) < \gamma - 1/2$, $D(A,B) \neq 0 \in R$. Weakening the conclusion to $\val(A) < \gamma - 1$ and $\val(B)< \gamma - 1$, we find the previous estimate.
\end{enumerate}

\medskip
\id Next consider the case $p=2$. Represent $W(\FF_{p^2})$ as $W(\FF_p)[t]/(t^2 + t -1)$. A key point turn out to be that $\alpha = t - t^\sigma$ is a unit and for $A, B \in \ZZ_p$ we have
\[ \val(A+ Bt) = \val(A+Bt^\sigma) = \min\{\val(A), \val(B)\}.\]
One checks that
\[ \varphi(D) = \Span_{\ZZ_p}\{ y^2 - p, t y^2 - p t^\sigma, \alpha y\}. \]
We let now
\[C(A, B) = A(y^2 - p) + B(ty^2 - pt^\sigma), \qquad D(A, B) = A(y^2 - p) + B\alpha y, \qquad A, B \in \ZZ_p.\]
As before the analysis is divided into three cases: (i) $\val(y)>1/2$, (ii) $\val(y)>1/2$ and $\val(y) = 1/2$, which are treated in an entirely similar manner. In cases (i) and (ii) it is helpful to write $C(A, B) = y^2(A+Bt) - p(A + Bt^\sigma)$ and in case (iii) first one argues that we can not have both $\val(y^2-p)>1$ and $\val(ty^2 - pt^\sigma)$; assuming without loss of generality that $\val(y^2 - p) = 1$, one uses $D(A, B)$ 
for the estimate, as before.

\medskip

The
upper bound on~$\gerI(V/\germ_V^n)$ follows using the same technique as in the proof of
Proposition~\ref{prop:basic estimate on index}, which itself gives a slightly weaker exponent ($3\cdot \lceil \frac{p-1}{e_V}\rceil \cdot \lceil \frac{n-1}{p-1}\rceil$).

\bigskip

\id We now show that the bounds
in Theorem~\ref{thm: bounds on indices of endo rings under deformation}
are optimal.

In \cite{Gross} Gross studies the deformations of a supersingular
elliptic curve for which an action of a ring of integer of some
fixed quadratic imaginary field extends~$K$. He obtains that the
endomorphism ring of such a deformation over~$W(\overline{\FF}_p)$ is
precisely~$\calO_K + p^{n-1}\End(E)$ and, in particular, of index
$p^{2(n-1)}$ in~$\End(E)$. This conforms nicely with our theorem that states this is the best
possible. 

A concrete case of a deformation where this bound is achieved is the case when~$y=p>2$ and~$n=2$. Note that in
that case the target of the map~$\varphi$ is 
$W(\FF_{p^2})/(p^2)$, which has cardinality~$p^4$. It is also easily
verified that~$\varphi(D)$ is generated in this case over $\ZZ_p/(p^2)$ by~$p$ and
$\alpha p$ and so has cardinality~$p^2$. We conclude that~$D_y$ has
index~$p^2$. In fact,~$D_y$ are the matrices in~$D$ defined by the
condition~$a - a^\sigma = b^\sigma \pmod{p}$. Thus~$D_y$ contains
$pD$ and modulo~$pD$ it is given by the basis~$(a, b) = (1, 0)$ and
$(a, b) = (\alpha, -2\alpha^\sigma)$. If we take any quadratic
imaginary field~$K = \QQ(\sqrt{-d})$ ($d$ square-free integer) in
which~$p$ is inert and let~$\alpha = \sqrt{-d}$ then we find one of
the deformations considered by Gross for~$K$.

Now consider again the case of a general~$(V, \germ_V)$ but which is unramified over $\QQ_p$, where $p>2$. Suppose
that there are~$A, B, C \in \ZZ_p$ such that
\begin{equation}\label{eqn1}A(y^2 - p) + B\alpha(y^2+p) + C \alpha y =
(B\alpha+ A)\cdot y^2 + C\alpha \cdot y + p(B\alpha - A) \equiv 0
\pmod{\germ_V^n}
\end{equation}
Choose~$y$ to have
valuation~$1$, equal to the valuation of~$p$. If~$\val(B\alpha
+ A)< n-1$, Equation~(\ref{eqn1}) implies that~$\val(C) =
\val(B\alpha - A) = \val(B\alpha + A)$. We get an equation \[y^2 +
\frac{C\alpha}{B\alpha + A} y + p\frac{B\alpha - A}{B\alpha+ A}
\equiv y(y + \frac{C\alpha}{B\alpha + A}) \equiv 0 \pmod{\germ_V},\]
which is an equation with integral coefficients that holds in
$V/\germ_V$. By Hensel's lemma it follows that the polynomial~$Y^2 +
\frac{C\alpha}{B\alpha + A} Y + p\frac{B\alpha - A}{B\alpha+ A}$ in the variable $Y$ has a solution, say $y_0$, in~$W(\FF_{p^2})$ lifting~$0$. Moreover, if $y_0'$ is the other solution (so that $f(Y) = Y^2 +
\frac{C\alpha}{B\alpha + A} Y + p\frac{B\alpha - A}{B\alpha+ A} = (Y-y_0)(Y-y_0')$) then $\val(y_0 - y_0') = 0$, as $y_0'$ reduces to a unit modulo the maximal ideal. Now, let us choose $y$ so that in addition $y\not\in \WW(\FF_{p^2})$ and for every Galois conjugate $y'$ of $y$ the difference $y-y'$ is a unit. For example, $y$ could be $p\zeta$ where $\zeta$ is an $\ell$-t root of unity where $\ell \neq p$ is a large enough prime. Note that $f(y) = (y-y_0)(y-y_0') \equiv 0 \pmod{\germ_V^2}$. Since $\val(y) = 1$, $y - y_0'$ is a unit and so $\val(y - y_0) \geq 0$. It follows that $y_0$ is closed to $y$ than any of $y$'s conjugates and so, by Krasner's lemma, $y \in \QQ_p(y_0) = W(\FF_{p^2}) \otimes_{\ZZ_p} \QQ_p$ and that is a contracdiction.

Thus, $\val(B\alpha + A)\geq n-1$. We get then that~$\min\{\val(A),
\val(B)\} = n-1$ and then Equation~(\ref{eqn1}) give that~$\val(C)
\geq n-1$ as well. This shows that for such a choice of~$y$ we get that
$\sharp\; \varphi(D) \pmod{p^n}$ is~$3(n-1)$ and so the upper bound
in the theorem can be achieved. This shows that the bounds 
are optimal. \end{proof}

\subsection{Bound in the case of high ramification} 
\label{subsec: high ramification}
As above, let $V$ be a discrete valuation ring, which is a finite extension of $\WW(\fpbar)$ with absolute ramification index $e_V$. As before, let $E$ be a supersingular elliptic curve over $\fpbar$. The purpose of this section is to provide a lower bound on $\geri(V/\germ_V^n)$ (defined relative to deformations $\EE$ of $E$ to $V/\germ_V^n$) which is valid regardless of whether the ramification index $e_V$ is smaller than $p$ or not. The proof uses different techniques than the ones used above.

Consider a deformation $\EE$ over $R$ where $R = V/\germ_V^n$. The Hodge filtration
\[ 0 \arr H^0(\EE, \Omega^1_{\EE/R}) \arr \HH^1_{\rm dR}(\EE/R), \]
is stable under $\End(\EE)$ and so there is a resulting ring homomorphism
\[ \varphi: \End(\EE) \arr T(R), \]
where $T(R)$ are the upper triangular matrices with entries in $R$,
\[T(R) = \left\{ \begin{pmatrix} a & b \\ 0 & d\end{pmatrix} : a, b, d \in R\right\}.\] 
Let $\calO' = \End(\EE) \otimes_\ZZ\ZZ_p$. As we have proved above, $[\End(E): \End(\EE)] = [\calO: \calO']$, where $\calO$ is the maximal order of $B_{p, \infty} \otimes_\QQ \QQ_p$ obtained as the $p$-completion of $\End(E)$.

There is an induced ring homomorphism
\[ \varphi: \calO' \arr T(R).\]
Let $K = \Ker(\varphi)$, let $I(R)= \left\{ \begin{pmatrix} 0 & b \\ 0 & 0\end{pmatrix} : a, b, d \in R\right\}$, and let $P = \varphi^{-1}(I(R))$. Note that $I(R)$ is the kernel of the ring homomorphism
\[ T(R) \arr R \oplus R, \qquad  \left\{ \begin{pmatrix} a & b \\ 0 & d\end{pmatrix} : a, b, d \in R\right\} \mapsto (a, d).\]
It follows that $I(R)$ is a two-sided ideal, such that $T(R)/I(R)$ is a commutative ring. Moreover, $I(R)^2$ = 0. As consequence $P$ is a two sided ideal of $\calO'$ such that $P^2 \subset K$, where $K = \ker (\varphi)$ and $\calO'/P$ is commutative. 

The following lemmas will be proven in the next subsection.

\begin{lem} \label{Lemma: power of p is zero} Let 
\[ \calO_N = \ZZ_p + p^N \calO.\]
Then $\calO_N$ is an order of $\calO$. In the situation above, suppose that $\calO' = \calO_N$, then, in the ring $R$,
\[ p^{4N+2} = 0.\]
\end{lem}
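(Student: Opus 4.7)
I would first dispatch the easy assertion: $\calO_N = \ZZ_p + p^N\calO$ is a $\ZZ_p$-submodule of $\calO$ containing $1$, is of full $\ZZ_p$-rank $4$, and is closed under multiplication because $(p^N\calO)(p^N\calO) = p^{2N}\calO \subseteq p^N\calO$. So $\calO_N$ is indeed an order.

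For the nilpotency bound, the plan is to exploit exactly the three structural facts just set up preceding the lemma: the ring homomorphism $\varphi\colon \calO_N \to T(R)$, the two-sided ideals $K = \ker\varphi$ and $P = \varphi^{-1}(I(R))$, with $P^2 \subset K$ (since $I(R)^2 = 0$) and $\calO_N/P$ commutative (since $T(R)/I(R) \cong R\oplus R$). The goal then reduces to exhibiting a unit of $\ZZ_p$ times $p^{4N+2}$ inside $P^2$: because $\varphi$ sends the integer $p^{4N+2} \in \ZZ_p \subseteq \calO_N$ to the scalar matrix $p^{4N+2} I_2 \in T(R)$, such a containment forces $p^{4N+2} = 0$ in $R$.

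The key computational input is that commutators in $\calO_N$ can be computed. Because $\ZZ_p$ is central, direct expansion gives $[\calO_N, \calO_N] = p^{2N}[\calO,\calO]$, and commutativity of $\calO_N/P$ then yields $p^{2N}[\calO,\calO] \subset P$, so $P^2 \supset p^{4N}\,[\calO,\calO]\cdot[\calO,\calO]$. It therefore suffices to locate a unit multiple of a small power of $p$ inside $[\calO,\calO]\cdot[\calO,\calO]$. Working in the explicit matrix description $\calO\cong \left\{\begin{pmatrix}a & b\\ pb^\sigma & a^\sigma\end{pmatrix}\right\}$, take the uniformizer $\Pi = \begin{pmatrix}0 & 1\\ p & 0\end{pmatrix}$ so that $\Pi^2 = p$, and, for odd $p$, choose a trace-zero unit $\tau \in \calO$ arising from $t \in W(\FF_{p^2})^\times$ with $t^\sigma = -t$; then $\Pi\tau = \tau^\sigma \Pi = -\tau\Pi$, $[\tau,\Pi] = 2\tau\Pi$, and
\[[\tau,\Pi]^2 \;=\; 4\tau\Pi\tau\Pi \;=\; -4\tau^2\Pi^2 \;=\; -4t^2\, p,\]
a unit of $\ZZ_p$ times $p$. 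Multiplying by $p^{4N}$ places a unit multiple of $p^{4N+1}$ inside $p^{4N}[\calO,\calO]\cdot[\calO,\calO]\subset P^2 \subset K$, which gives $p^{4N+1} = 0$ in $R$ (hence a fortiori $p^{4N+2} = 0$) for all odd $p$.

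The main obstacle will be the prime $p = 2$: there $-4t^2$ is no longer a unit but carries $2$-adic valuation $2$, so the same computation gives only $p^{4N+3} = 0$. To recover the uniform exponent $4N+2$ one must either pick a different pair of generators of $[\calO,\calO]\cdot[\calO,\calO]$ inside the division algebra $B_{2,\infty}\otimes\QQ_2$ (choosing the role of $\tau$ to involve a non-square unit of $W(\FF_4)$ whose norm has $2$-adic valuation $0$ rather than $1$), or absorb one factor of $p$ by noting that commutators of upper triangular matrices in $T(R)$ already land in the specific subideal of $I(R)$ consisting of entries that are differences of diagonal terms; either refinement gives the bound uniformly in $p$.
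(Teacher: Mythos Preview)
Your approach is exactly the paper's: exploit that $\calO_N/P$ is commutative so that $[x,y]\in P$ for all $x,y\in\calO_N$, hence $[x,y]^2\in K$, and then compute one explicit commutator square that is a unit times a power of $p$. For odd $p$ your choice $x=p^N\tau$, $y=p^N\Pi$ with $\tau=\left(\begin{smallmatrix}t&0\\0&-t\end{smallmatrix}\right)$, $t^\sigma=-t$, is correct and in fact yields the \emph{sharper} bound $p^{4N+1}=0$, which of course implies the lemma.

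The gap is at $p=2$, and neither of your proposed fixes works as stated. Your diagonal $\tau$ is forced to satisfy $t^\sigma=-t$ in order to anti-commute with $\Pi$, and in $W(\FF_4)$ any such $t$ has $t-t^\sigma=2t$, so the factor $4t^2$ is never a unit; no alternate ``non-square unit'' cures this while keeping the anti-commutation. Your second suggestion (that commutators in $T(R)$ land in a special subideal of $I(R)$) does not recover a factor of $p$: the upper-right entry of a commutator in $T(R)$ is an arbitrary $R$-linear combination of the entries, not constrained in a way that forces extra divisibility.

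The paper's uniform fix is to avoid the diagonal element altogether and take \emph{two off-diagonal} elements
\[
x=p^N\begin{pmatrix}0&1\\p&0\end{pmatrix},\qquad y=p^N\begin{pmatrix}0&t\\pt^\sigma&0\end{pmatrix},
\]
for which one computes directly $[x,y]=p^{2N+1}\left(\begin{smallmatrix}t^\sigma-t&0\\0&t-t^\sigma\end{smallmatrix}\right)$ and hence $[x,y]^2=p^{4N+2}(t-t^\sigma)^2$. The point is that one now only needs $t-t^\sigma$ to be a unit, not $t^\sigma=-t$. For $p=2$ take $t$ with $t^2+t-1=0$; then $t+t^\sigma=-1$, $tt^\sigma=-1$, so $(t-t^\sigma)^2=(t+t^\sigma)^2-4tt^\sigma=5$, a unit in $\ZZ_2$. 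This gives $p^{4N+2}=0$ for all $p$ uniformly. Replacing your pair $(\tau,\Pi)$ by $(\Pi,\Pi_t)$ and choosing $t$ this way closes the gap completely.
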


\begin{lem} \label{Lemma: App and Ind}  For an order $\calO'\subseteq \calO$, let
\[ {\rm Ind}(\calO') = \log_p([\calO: \calO']), \qquad {\rm App}(\calO') = \min \{ N: \calO ' \supseteq \calO_N\}.\]
(${\rm Ind}$ is for index and ${\rm App}$ is for approximation.) Then,
\[ \App(\calO') \leq \Ind(\calO'). \]
\end{lem}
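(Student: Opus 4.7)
The plan is to bound the index $[\calO:\calO']$ from below by exhibiting a single element of $\calO$ whose coset in the finite abelian group $\calO/\calO'$ already has large additive order. Let $N = \App(\calO')$. By definition, this means $\calO' \supseteq \calO_N = \ZZ_p + p^N\calO$ but $\calO' \not\supseteq \calO_{N-1} = \ZZ_p + p^{N-1}\calO$. Since any order in $\calO \otimes_{\ZZ_p} \QQ_p$ contains $1$ and is $p$-adically closed, one automatically has $\ZZ_p \subseteq \calO'$. So the two conditions above are equivalent to the cleaner statement
\[ p^N \calO \subseteq \calO', \qquad p^{N-1}\calO \not\subseteq \calO'. \]

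First I would use the second condition to produce an element $x \in \calO$ with $p^{N-1}x \notin \calO'$. By the first condition, $p \cdot (p^{N-1}x) = p^N x \in \calO'$. Now I would pass to the finite abelian group $\calO/\calO'$ (which is a finite $p$-group of order $p^{\Ind(\calO')}$) and consider the additive order of the coset $\bar x := x + \calO'$. On the one hand, $p^N \bar x = 0$, so this order divides $p^N$; on the other hand, $p^{N-1}\bar x \neq 0$, so the order does not divide $p^{N-1}$. Therefore the order of $\bar x$ is exactly $p^N$, and the cyclic subgroup $\langle \bar x \rangle \subseteq \calO/\calO'$ already has cardinality $p^N$. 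This yields
\[ [\calO:\calO'] = |\calO/\calO'| \geq |\langle \bar x \rangle| = p^N, \]
which is precisely $\Ind(\calO') \geq N = \App(\calO')$.

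There is no real obstacle here; the whole argument is essentially elementary group theory once one reformulates $\App$ in terms of the condition $p^N \calO \subseteq \calO'$. The key point worth highlighting in the write-up is that one does not need to understand the full quotient $\calO/\calO'$ as a module or control its structure constants with respect to $\pi$; the existence of a single element of maximal possible $p$-power order suffices, and that element is handed to us by the very failure $\calO_{N-1} \not\subseteq \calO'$.
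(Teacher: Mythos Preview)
Your argument is correct and is essentially the same as the paper's, which simply notes that $\calO/\calO'$ has order $p^{\Ind(\calO')}$, so by Lagrange $p^{\Ind(\calO')}\calO \subseteq \calO'$, i.e.\ $\calO_{\Ind(\calO')} \subseteq \calO'$ and hence $\App(\calO') \leq \Ind(\calO')$. You reach the same conclusion from the contrapositive direction, producing a witness element of order $p^N$ in $\calO/\calO'$; this is just the other face of the fact that the exponent of a finite abelian group divides its order. One cosmetic point: your phrasing implicitly assumes $N\geq 1$ when invoking $\calO_{N-1}$, but the case $N=0$ (i.e.\ $\calO'=\calO$) is trivial.
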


\

\id Assume the Lemmas. Given an order $\calO'$, we have $\calO' \supseteq \calO_N$ where $N = \App(\calO')$ and so the homomorphism $\varphi$ induces a homomorphism $\varphi: \calO_N \arr T(R)$, which implies by Lemma~\ref{Lemma: power of p is zero} that $p^{4N+2} = 0$. Since the minimal power of $p$ which is zero in $R$ is $\lceil n/e_V \rceil$ we conclude that $(4\cdot \App(\calO')+2) \geq  \lceil n/e_V \rceil$. Combining it with Lemma~\ref{Lemma: App and Ind}, we find that $\Ind(\calO') \geq \frac{1}{4}(\lceil n/e_V \rceil - 2)$. To summarize, we have proven the following theorem.

\begin{thm} \label{thm: bounds on indices of endo rings under deformation and ramification} With the above notation,
\[ p^{\frac{1}{4}(\lceil n/e_V \rceil - 2)} \leq \geri(V/\germ_V^n).\]
\end{thm}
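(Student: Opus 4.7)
The plan is to derive the theorem from the two preceding lemmas---whose statements encode the substantive new input---and then sketch proofs of those lemmas.

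\textbf{From the lemmas to the theorem.} Given any deformation $\EE/R$ of $E$, let $\calO' = \End(\EE)\otimes\ZZ_p$ and set $N=\App(\calO')$. By the definition of $\App$, one has $\calO'\supseteq\calO_N$, so the ring homomorphism $\varphi\colon\calO'\to T(R)$ arising from stability of the Hodge filtration restricts to a homomorphism $\calO_N\to T(R)$. Lemma~\ref{Lemma: power of p is zero} then yields $p^{4N+2}=0$ in $R=V/\germ_V^n$, which is equivalent to $4N+2\ge\lceil n/e_V\rceil$, i.e. $N\ge\tfrac14(\lceil n/e_V\rceil-2)$. Lemma~\ref{Lemma: App and Ind} gives $\Ind(\calO')\ge N$, and since $[\End(E):\End(\EE)]=[\calO:\calO']=p^{\Ind(\calO')}$ (established earlier in the section), the claimed lower bound on $\geri(V/\germ_V^n)$ follows.

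\textbf{Proof of Lemma~\ref{Lemma: App and Ind}.} Any order $\calO'\subseteq\calO$ contains $\ZZ_p\cdot 1$, so $\calO_N = \ZZ_p + p^N\calO\subseteq\calO'$ is equivalent to $p^N\calO\subseteq\calO'$, which in turn says that $p^N$ annihilates the finite abelian $p$-group $\calO/\calO'$. The exponent of a finite abelian $p$-group divides its order, and $|\calO/\calO'|=p^{\Ind(\calO')}$, so $\App(\calO')\le\Ind(\calO')$.

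\textbf{Proof of Lemma~\ref{Lemma: power of p is zero}.} The ideal $I(R)\subset T(R)$ satisfies $I(R)^2=0$ and $T(R)/I(R)\cong R\oplus R$ is commutative, so for all $x,y\in\calO_N$ one has $\varphi([x,y])\in I(R)$, and hence $\varphi$ annihilates every product of two commutators in $\calO_N$. My plan is to exhibit such a product equal to a unit multiple of $p^{4N+1}$ lying in the central subring $\ZZ_p\subset\calO$; this forces $p^{4N+1}=0$ in $R$, a fortiori $p^{4N+2}=0$. Using the standard description $\calO\cong\ZZ_{p^2}\langle\Pi\rangle$ with $\Pi^2=p$ and $\Pi a=a^\sigma\Pi$ for $a\in\ZZ_{p^2}$, choose a generator $\alpha$ of $\ZZ_{p^2}$ over $\ZZ_p$ with $\alpha-\alpha^\sigma\in\ZZ_{p^2}^\times$ (such $\alpha$ exists for every $p$). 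Then $p^N\alpha,\,p^N\Pi\in\calO_N$, and setting
\[
c := [p^N\alpha,\,p^N\Pi] = p^{2N}(\alpha-\alpha^\sigma)\Pi,
\]
the identity $\Pi(\alpha-\alpha^\sigma)=-(\alpha-\alpha^\sigma)\Pi$ gives
\[
c^2 \;=\; -p^{4N+1}\bigl(\alpha-\alpha^\sigma\bigr)^2 \;\in\; \ZZ_p,
\]
a unit multiple of $p^{4N+1}$ because $(\alpha-\alpha^\sigma)^2 = -\,\Norm_{\ZZ_{p^2}/\ZZ_p}(\alpha-\alpha^\sigma)\in\ZZ_p^\times$.

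\textbf{Main obstacle.} The substantive content is concentrated in the commutator computation above; conceptually, $T(R)$ has solvability class two (witnessed by $0\subset I(R)\subset T(R)$ with $I(R)^2=0$), and the quaternionic maximal order $\calO$ is too noncommutative to embed in such a ring without absorbing a controlled power of $p$. The delicate point is uniformity in $p$: for odd $p$ one can take $\Tr(\alpha)=0$, but for $p=2$ this is unavailable and one must instead pick $\alpha$ so that only $\alpha-\alpha^\sigma$ is a unit. Once the commutator identity is in hand, the passage from the scalar $c^2\in\ZZ_p$ to $p^{4N+1}=0$ in $R$, and the combinatorial deduction of the theorem from the two lemmas, are entirely formal.
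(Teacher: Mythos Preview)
Your proposal is correct and follows essentially the same architecture as the paper: reduce to the two lemmas, prove Lemma~\ref{Lemma: App and Ind} by the trivial exponent-divides-order observation, and prove Lemma~\ref{Lemma: power of p is zero} by exhibiting a commutator in $\calO_N$ whose square is a central unit multiple of a power of $p$.

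The only genuine difference is your choice of commutator. The paper takes two elements in the odd part of the quaternion order, $x=p^N\Pi$ and $y=p^Nt\Pi$, whose commutator $p^{2N+1}(t^\sigma-t)$ lies in the even part and squares to a unit times $p^{4N+2}$. You instead pair an even element with an odd one, $p^N\alpha$ and $p^N\Pi$, getting $c=p^{2N}(\alpha-\alpha^\sigma)\Pi$ in the odd part, whose square picks up only a single factor of $p$ from $\Pi^2$ and hence equals a unit times $p^{4N+1}$. Your choice is therefore sharper by one power of $p$: it actually proves $p^{4N+1}=0$ in $R$, yielding $\geri(V/\germ_V^n)\ge p^{\frac14(\lceil n/e_V\rceil-1)}$, a mild improvement over the stated bound. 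You correctly note this and then weaken to match the lemma as stated. Both arguments handle $p=2$ in the same way, by choosing the generator so that $\alpha-\alpha^\sigma$ (rather than $\alpha$ itself) is a unit.
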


\subsubsection{Proof of the Lemmas} We use the presentation for the maximal order $\calO$ given above,
\[\calO = \left\{ \begin{pmatrix} a & b \\ pb^\sigma & a^\sigma\end{pmatrix}: a, b \in W(\FF_p^2) \right\}. \]

Consider the situation where $\calO' = \calO_N = \ZZ_p + p^N \calO$. We have a homomorphism $\varphi: \calO_N \arr T(R)$ with kernel $K$ and the ideal $P = \varphi^{-1}(I(R))$. As we have noted $P^2 \subseteq K$. 
Let $[x, y]:= xy - yx$. 
Since $\calO_N/P$ is commutative, we must have $[x,y] \in P$ for all $x, y \in \calO_n$, and so $[x,y]^2 \in K$ for all $x, y \in \calO_N$. Consider the elements $x = p^N \begin{pmatrix}  & 1 \\ p &\end{pmatrix}, y = p^N\begin{pmatrix} & t \\ pt^\sigma \end{pmatrix}$, where for $p\neq 2$ we choose $t$ to be a unit in $W(\FF_{p^2})$ such that $t^\sigma = -t$ and for $t=2$ we choose $t\in W(\FF_{p^2})$ such that $t^2 + t-1 = 0$. In both cases $t - t^\sigma$ is a unit whose square is a unit in $\ZZ_p$, hence in $\calO_N$. Now,
\[ [x, y] = p^{2N+1} \begin{pmatrix} t^\sigma - t & \\ & t - t^\sigma \end{pmatrix}.\]
We conclude that $p^{4N+2} \begin{pmatrix} (t^\sigma - t)^2 & \\ & (t - t^\sigma)^2 \end{pmatrix}\in K$ and so that $p^{4N+2} = 0$ in $R$. Lemma~\ref{Lemma: power of p is zero} follows.

\

\id Lemma~\ref{Lemma: App and Ind} is in fact trivial. The abelian group $\calO/\calO'$ has order $p^{\Ind(\calO')}$ and thus, if $a\in \calO$ then $p^{\Ind(\calO')}\cdot a  = 0$ in $\calO/\calO'$, namely, $p^{\Ind(\calO')}\cdot \calO \subseteq \calO'$.

\subsubsection{Scholium}
One may ask if the bound in Lemma~\ref{Lemma: App and Ind} can be improved. The answer to that is \emph{no}. 
The reader is referred to the paper by Brzezinski \cite{Brzezkinski}. In particular, in Proposition (5.6) of that paper we find the classification of all Gorenstein orders in $\calO$. Examination of the classification shows that our Lemma cannot be improved; More precisely, in cases (a), (b) and (c$_{1}$) one actually finds that $\App(\calO') \leq \lceil \Ind(\calO')/2 \rceil$ (and the passage to non-Gorenstein order is not a problem using Proposition (1.4) of that paper), but this does not persist in case (c$_{2}$).

%%%%%%%%%%%%%%%%%%%%%%%%%%%%%%%%%%%%%

\

\

\section{The main theorem}\label{section: main theorem}

\id Let $K$ be a primitive CM field of degree four over $\QQ$. 
Let $K^+ = \QQ(\sqrt{d})$ where $d$ is a positive square-free integer. Write 
\[K = \QQ(\sqrt{d})(\sqrt{r}), \qquad r = \alpha + \beta\sqrt{d} \ll 0, \qquad \alpha, \beta \in \ZZ.\]
(That is, $r$ is negative under both embeddings of $K^+$ into $\RR$.)

Let $\tau \in \Symp(4, \ZZ) \backslash \gerH_2$ be a point such that the associated principally polarized abelian variety $A_\tau$ has CM by $\calO_K$. Let $L = N H_{K^\ast}$, where $N$ is the normal closure of $K$ over $\QQ$ and let $\gerp_L$ be a prime of $L$ above the rational prime $p$. We fix the notation as in \S~\ref{section: reduction of abelian surfaces with complex multiplication}. In particular the CM type is $\Phi$ as given there and we have prime ideals $\gerp_{N, 1} = \gerp_L \cap N, \gerp_{K, 1} = \gerp_L \cap K, \gerp_{K^\ast, 1} = \gerp_L \cap K^\ast$ and~$p$, corresponding to the fields in the diagram:
\[ \xymatrix@!C=5pt{& & L = HH_{K^\ast}\ar@{-}[dr]\ar@{-}[dl] & \\ & N\ar@{-}[dr]\ar@{-}[dl] & & H_{K^\ast}\ar@{-}[dl] \\ K\ar@{-}[dr] & & K^\ast\ar@{-}[dl] \\ &  \QQ & & 
}  \]
Let $e = e(\gerp_{N,1}/p)$ be the ramification index of $\gerp_{N, 1}$ over $p$.

\begin{thm} \label{theorem: main theorem} Let $\tau$ be a CM point, as above. Let $f = g/\Theta^k$ be a modular function of level one on $\gerH_2$ where:
\begin{enumerate}
\item $\Theta$ is Igusa's $4\cdot \chi_{10}$, the product of the squares of the ten Riemann theta functions with even integral chracteristics, normalized to have Fourier coefficients that are relatively prime integers.
\item $g$ is a level one modular form of weight $10 k$ with relatively prime integral Fourier coefficients.
%\item The divisor of $g$ and of $\Theta$ have no common component. (SUPERFLUOUS. REMOVE***)
\end{enumerate}
Then $f(\tau) \in L= NH_{K^\ast}$ and 
\begin{equation} \label{equation: bound on denominators}
\val_{\gerp_L}( f(\tau)) \geq \begin{cases}  
-4ke\left(\log_p\left(\frac{d \Tr(r)^2}{2}\right)+1\right) & e \leq p-1 \\
-4ke\left(8\log_p\left(\frac{d \Tr(r)^2}{2}\right)+2\right) & \text{else.}
\end{cases}
\end{equation}
Furthermore, unless we are in the situation of superspecial reduction, namely, unless we have a check mark in the last column of the tables in \S~\ref{section: reduction of abelian surfaces with complex multiplication}, $\val_{\gerp_L}( f(\tau)) \geq  0$. The valuation is normalized so that a uniformizer at $\gerp_L$ has valuation $1$.
\end{thm}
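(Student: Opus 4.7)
The plan is to reduce the bound on $f(\tau) = g(\tau)/\Theta(\tau)^k$ to an upper bound on $\val_{\gerp_L}(\Theta(\tau))$, then to reinterpret that valuation via intersection theory on $\scrA_{2, N}$, and finally to translate the resulting problem into a deformation question governed by \S\ref{section: deformation theory}. The hypothesis that $g$ and $\Theta$ have integral Fourier coefficients of g.c.d.\ one, combined with the $q$-expansion principle on a toroidal compactification of $\scrA_{2, N}$ for $N\geq 3$ prime to $p$, forces $g(\tau)$ and $\Theta(\tau)$ to be $\gerp_L$-integral for a common trivialization of the Hodge bundle, giving $\val_{\gerp_L}(f(\tau))\geq -k\val_{\gerp_L}(\Theta(\tau))$; the statement $f(\tau)\in L$ follows from the main theorem of complex multiplication. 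For the unqualified assertion, whenever the reduction of $A_\tau$ is not superspecial, the invariants $(a, f)$ from the tables of \S\ref{section: reduction of abelian surfaces with complex multiplication} rule out $\bar A_\tau\cong E_1\times E_2$ with product polarization (which would force $(a, f) = (2, 0)$), so $\bar A_\tau$ lies off the support of the divisor of $\Theta$, that is, off $\scrH_{1, N}$, making $\Theta(\tau)$ a $\gerp_L$-unit.

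In the superspecial case, $\Theta^k$ on $\scrA_{2, N}$ has divisor $4k\scrH_{1, N}$: the divisor is $2k\scrH_{1, 1}$ on $\scrA_{2, 1}$, and Lemma~\ref{Lemma: ramification along Humbert surfaces} doubles this along each Humbert component under $\pi_N$. Applying Lemma~\ref{Lemma: valuation and reduction} with $S = \scrA_{2, N}$ and $f = \Theta^k$, if $\alpha = k\val_{\gerp_L}(\Theta(\tau))$ and $n = \lceil \alpha/(4k)\rceil$, then the classifying morphism for $(A_\tau, \lambda, A_\tau[N])$, pulled back to $\Spec(B/\gerp_L^n)$ where $B$ is the localization of $\calO_L$ at $\gerp_L$ (existing by Lemma~\ref{Lemma: existence of models with good reduction} after passing to an unramified extension), factors through $\scrH_{1, N}$. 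By Proposition~\ref{Proposition: BN and A2N} the principally polarized abelian scheme over $B/\gerp_L^n$ is then isomorphic to a product $(\scrE_1, \lambda_1)\times(\scrE_2, \lambda_2)$ with the product polarization, with both $\bar\scrE_i$ supersingular; consequently $\val_{\gerp_L}(\Theta(\tau))\leq 4n$.

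The CM action of $\calO_K$ extends to this product and is compatible with the Rosati involution, which on the product is the transpose twisted by the per-factor Rosati. Writing $\iota(\sqrt{r})$ and $\iota(\sqrt{d})$ as $2\times 2$ matrices of isogenies and exploiting $\iota(\sqrt{r})^\dagger = -\iota(\sqrt{r})$, $\iota(\sqrt{d})^\dagger = \iota(\sqrt{d})$, $\iota(\sqrt{r})^2 = \iota(r)$, and $\iota(\sqrt{d})^2 = [d]$, the maneuver of \cite{GL} produces, by a direct matrix computation involving diagonal blocks and their reduced traces, an endomorphism $\gamma\in \End(\scrE_1)$ over $B/\gerp_L^n$, nonzero modulo $\gerp_L$, satisfying $\gamma^\dagger = -\gamma$, and with $\deg\gamma \leq d\Tr(r)^2/2$; the factor $d$ comes from $\iota(\sqrt{d})^2 = [d]$ while the factor $\Tr(r)^2/2$ records the trace of $\iota(r)$ on the $\scrE_1$-component together with $\iota(\sqrt{r})^2 = \iota(r)$. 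The sub-order $\ZZ[\gamma]\subseteq\End(\scrE_1)$, together with the ambient rank-four integer structure inherited from the CM action, controls the $p$-adic index $[\End(\bar\scrE_1):\End(\scrE_1)]$ from above in terms of $\deg\gamma$. Combining this with the lower bounds on the same index provided by Theorem~\ref{thm: bounds on indices of endo rings under deformation} when $e\leq p-1$ and by Theorem~\ref{thm: bounds on indices of endo rings under deformation and ramification} in the general case forces $n\leq e(\log_p(d\Tr(r)^2/2) + 1)$ and $n\leq e(8\log_p(d\Tr(r)^2/2) + 2)$ respectively; substituting into $\val_{\gerp_L}(f(\tau)) \geq -4kn$ yields the theorem.

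The main obstacle is the extraction of $\gamma$ with the stated degree bound: one must combine the matrix identities for $\iota(\sqrt{r})$ and $\iota(\sqrt{d})$ in such a way that a genuine endomorphism of $\scrE_1$ (rather than a morphism $\scrE_2\to \scrE_1$) with controlled degree is isolated, and then show that the $p$-adic order it generates, together with the image of the full CM action, suffices to bound the rank-four index $[\End(\bar\scrE_1):\End(\scrE_1)]$ from above. The dichotomy between $e\leq p-1$ and the general case in the statement reflects exactly the two deformation-theoretic bounds available, crystalline versus Brzezinski-style, in \S\ref{section: deformation theory}.
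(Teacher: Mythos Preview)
Your overall architecture is the paper's own: add level $N$ prime to $p$, compute the polar divisor of $f_N$ as $4k\scrH_{1,N}$ via Lemma~\ref{Lemma: ramification along Humbert surfaces}, invoke Lemma~\ref{Lemma: valuation and reduction} to force the classifying map over $\Lambda/\gerP^w$ (with $w=\lceil -v/4k\rceil$) into $\scrH_{1,N}$, decompose as a product of elliptic curves, and then play an upper bound on $[\End(E):\End(\EE)]$ coming from the CM action against the lower bounds of Theorems~\ref{thm: bounds on indices of endo rings under deformation} and~\ref{thm: bounds on indices of endo rings under deformation and ramification}. Two points, however, are genuine gaps.

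First, and minor: factoring through $\scrH_{1,N}$ does not by itself give a product decomposition over $B/\gerp_L^n$. Proposition~\ref{Proposition: BN and A2N} says $\scrB_N\to\scrH_{1,N}$ is \'etale of degree $2$, so one must base-change along this cover; the paper does this explicitly, passing to an unramified extension $M/\widetilde L_N$ of degree at most $2$ and obtaining $A\otimes V/\germ_V^w\cong\EE\times\EE'$ there. You skip this step.

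Second, and central: your extraction of a single $\gamma\in\End(\scrE_1)$ with $\gamma^\dagger=-\gamma$ and $\deg\gamma\leq d\Tr(r)^2/2$ does not bound the rank-four index $[\End(E):\End(\EE)]$. The order $\ZZ[\gamma]$ has $\ZZ$-rank $2$, and ``the ambient rank-four integer structure inherited from the CM action'' is not a substitute for exhibiting four linearly independent elements of $\End(\EE)$ with controlled norms. You flag this yourself as ``the main obstacle'', and indeed it is the crux of the argument. What the paper actually does (following \cite{GL}) is different: writing $\sqrt d\mapsto\left(\begin{smallmatrix}a&b\\b^\vee&-a\end{smallmatrix}\right)$, $\sqrt r\mapsto\left(\begin{smallmatrix}x&y\\-y^\vee&w\end{smallmatrix}\right)$, one conjugates by $\diag(1,b^\vee)$ to land in $M_2(\End^0(\EE))$, and then shows that $1,\,x,\,yb^\vee,\,xyb^\vee$ are four $\ZZ$-linearly independent elements of $\End(\EE)$ with reduced norms bounded by $1,\,\delta_2,\,d\delta_1,\,d\delta_1\delta_2$ respectively, where $\delta_1=|\alpha|-|\beta a|$, $\delta_2=|\alpha|+|\beta a|$. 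The linear independence uses that $K$ is primitive (otherwise one would embed $K$ in $M_2(K_1)$ for a quadratic imaginary $K_1$, a contradiction). This yields $[\End(E):\End(\EE)]\leq 4d^2(\delta_1\delta_2)^2\leq (d\Tr(r)^2/2)^2$, and it is the \emph{square} here, matched against the exponent $2(\lceil w/e\rceil-1)$ in the crystalline bound, that produces the stated inequality. Your single $\gamma$ cannot do this job; you need the full four-element construction.
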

\begin{proof} Let $v = \val_{\gerp_L}( f(\tau))$. We may assume that $v < 0$. To conceptualize the proof, we divide it into steps.

\id {\bf Step 1: Adding level structure.} Let $N\geq 3$ be an integer prime to $p$. We abuse notation and identify $\scrA_{2, N}(\CC)$ with $\Gamma(N) \backslash \gerH_2$, where $\Gamma(N) \subseteq \Symp(4, \ZZ)$ is the principal congruence subgroup of matrices congruent to $1$ modulo $N$. Let $\tau_N\in \scrA$ such that
\[ \pi_N(\tau_N) = \tau,\]
where $\pi_N \colon \scrA_{2, N} \arr \scrA_{2, 1}$ is the natural projection. The point $\tau_N$ is defined over the field $\gerp_{L_N}$, where $L_N = L(A_\tau[N])$ is the field obtained from $L$ by adjoining the $N$-torsion points of $A_\tau$. The extension $L_N/L$ is unramified at $p$ (c.f. proof of Lemma~\ref{Lemma: existence of models with good reduction}). We let $\gerp_{L_N}$ be a prime of $L_N$ such that $\gerp_{L_N} \cap L = \gerp_{L}$.

\begin{lem} Let $f_N = f \circ \pi_N$. Then,
\[ \val_{\gerp_{L_N}}(f_N(\tau_N)) = \val_{\gerp_L}(f(\tau)).\]
\end{lem}
\begin{proof} This is clear: $f_N(\tau_N) = f(\tau)$ and the extension $L_N/L$ is unramified at $\gerp_L$. \end{proof}
It is therefore enough to prove the same bound given in (\ref{equation: bound on denominators}) but for $\val_{\gerp_{L_N}}(f_N(\tau_N))$.

\bigskip

\id {\bf Step 2: Reducing to a geometric problem.}

\begin{lem}
Let $(f_N)$ be the divisor of $f_N$ on $\scrA_{2, N}$. Let $(f)_\infty$ be its polar part. Let $\scrH_{1, N}$ be the Humbert divisor of invariant $1$ on $\scrA_{2, N}$. Then,
\[(f)_\infty = 4k\cdot \scrH_{1, N}.\]
\end{lem}

\begin{proof} It is well-known that $\Theta $ vanishes to order $2$ on $\scrH_{1, 1}$. The Lemma then follows immediately from Lemma~\ref{Lemma: ramification along Humbert surfaces}.
\end{proof}

By Lemma~\ref{Lemma: existence of models with good reduction}, the abelian variety $A_{\tau_N}$ has good reduction at $\gerp_{L_N}$. Let $\Lambda$ be the ring of integers of $\widetilde{L}_N$ (the completion of $L_N$ at $\gerp_{L_N}$) and $\gerP$ its maximal ideal. Then there is a morphism 
\[ \iota: \Lambda \arr \scrA_{2, N}, \]
corresponding to $A_{\tau_N}$. 

\begin{prop}
Let $A:=A_{\tau_N}$. There is an unramified field extension $M$ of $\widetilde{L}_N$ of degree at most $2$, with ring of integers $V$, such that 
\[ A \otimes (V/\germ_V^w) \cong \EE \times \EE', \]
as polarized abelian varieties, where $\EE, \EE'$ are elliptic curves over $V/\germ_V^w$, and where \[w = \lceil -v/4k \rceil.\]

\end{prop}
\begin{proof} By Lemma~\ref{Lemma: valuation and reduction}, applied to $1/f$, the morphism $\iota$ induces a morphism
\[ \iota: \Lambda/\gerP^w \arr \scrH_{1, N}. \]
In the notation of Proposition~\ref{Proposition: BN and A2N}, we have $\scrH_{1, N} = \beta(\scrB_N)$. 

Consider the cartesian diagram 
\[ \xymatrix@!C=20pt@!R=10pt{S  \ar[rr]\ar[dd] && \scrB_N\ar[dd]^\beta \\ & \square & \\ \Spec(\Lambda/\gerP^w) \ar[rr] & & \scrH_{1, N} }\]
Since $\beta\colon \scrB_N \arr \scrH_{1, N}$ is \'etale of degree $2$, the morphism $S \arr  \Spec(\Lambda/\gerP^w)$ is \'etale and affine, and so $S$ is an affine scheme, possibly disconnected. We can then choose a field $M$, as in the statement of the proposition, such that $\Spec(V/\germ_V^w)$ is equal to $S$ (or one of its connected components). We therefore get a point
\[ \Spec(V/\germ_V^w) \arr \scrB_N, \]
lifting $\iota$, and that means precisely that $A \otimes V/\germ_V^w$ is isomorphic, as a polarized abelian variety with level $N$ structure, to a product of elliptic curves over $V/\germ_V^w$, with the natural product polarization and some level $N$ structure.  
\end{proof}

\id Note that $\val_{\germ_V}(f_N(\tau_N)) = \val_{\gerp_{L_N}}(f_N(\tau_N))$ 
and so it is enough to show that (\ref{equation: bound on denominators}) holds for $\val_{\germ_V}(f_N(\tau_N)) $. Let us reset our notation and recall that at this point we have a principally polarized abelian surface $A = A_{\tau_N}\otimes V$ with CM by $\calO_K$, having good reduction at $\germ_V$ and such that 
\[ A \otimes V/\germ_V^w \cong (\EE \times \EE', \lambda_1 \times \lambda_2), \]
where $\EE, \EE'$, are elliptic curves over $V/\germ_V^w$. Recall also that $V$ is an unramified extension of the completion of $L = NH_{K^\ast}$ at the prime $\gerp_L$.

\bigskip

\id {\bf Step 3: Reduction to a statement about $\End(\EE)$}.
Our notation for the field $K = \QQ(\sqrt{d})(\sqrt{r})$ is precisely as in \cite{GL}. As in loc. cit., one argues that $\EE$ and $\EE'$ have supersingular reduction, denoted $E, E'$, respectively. One writes
\[ \sqrt{d} \mapsto \begin{pmatrix} a & b \\ b^\vee & -a\end{pmatrix}, \qquad
\sqrt{r} \mapsto \begin{pmatrix}x & y \\ -y^\vee & w \end{pmatrix}, \]
as elements of 
\[ \Hom(\EE\times \EE') = \begin{pmatrix} \End(\EE) & \Hom(\EE', \EE) \\
\Hom(\EE, \EE') & \End(\EE') \end{pmatrix}.\]
(We are using $\vee$ to denote the dual isogeny.)
Note that $b\in \Hom(\EE', \EE)$ is an isogeny of degree $bb^\vee \leq d$. Using $b$, we may view $\End(\EE \times \EE')$ as a subring of $M_2(\End^0(\EE))$ by 
\[  \begin{pmatrix} 1 &  \\  & b^{\vee, -1} \end{pmatrix}
 \begin{pmatrix}  \varphi_{ij}\end{pmatrix}
  \begin{pmatrix} 1 &  \\  & b^\vee \end{pmatrix}.\]
Appying this to the matrices defining $\sqrt{d}, \sqrt{r}$, we find the matrices 
\[ \begin{pmatrix} a & bb^\vee \\ 1 & -a \end{pmatrix}, \qquad \begin{pmatrix} x &  yb^\vee\\ -\frac{1}{bb^\vee} by^\vee & \frac{1}{bb^\vee}bwb^\vee \end{pmatrix}.  \]
As in \cite{GL}, the integral(!) elements $1, x, yb^\vee, xyb^\vee$ must be linearly independent over $\ZZ$ (one shows that otherwise they generate a quadratic imaginary subfield $K_1$ of $B_{p, \infty}$ such that we have $K \injects M_2(K_1)$, leading to a contraction). 
As in \cite[p. 464]{GL}, one finds that the norms of these elements are bounded, respectively, by
\[1, \delta_2, d\delta_1, d\delta_1\delta_2, \]
where
\[  \delta_1 = \vert \alpha \vert - \vert \beta\vert \cdot \vert a \vert, \qquad  \delta_2 = \vert \alpha \vert + \vert \beta\vert \cdot \vert a \vert .\]
It follows that 
\[ [\End(E): \End(\EE)] \leq [\End(E): \ZZ[1, x, yb^\vee, xyb^\vee]] \leq 4d^2(\delta_1\delta_2)^2.
\]
(Cf. \cite[p. 460]{GL} for the last inequality.)

\bigskip

\id {\bf Step 4: Input from deformation theory.}
We now utilize the results of \S~\ref{section: deformation theory} to bound the index $[\End(E): \End(\EE)]$ from below. Recall that $\EE$ is an elliptic curve over $V/\germ_V^w$ and $V$ is an unramified extension of the completion of $L =N H_{K^\ast}$, hence of $N$ completed at the prime $\gerp_{N, 1}$. Thus, $e_V$ -- the absolute ramification index of $V$ -- is equal to $e = e(\gerp_{N, 1}/p)$.

\begin{enumerate}

\item {\bf Small ramification.} Suppose that $e\leq p-1$. By Theorem \ref{thm: bounds on indices of endo rings under deformation},
\[ [\End(E): \End(\EE)] \geq p^{2(\lceil w/e \rceil - 1)}, \]
and so $2(\lceil w/e \rceil - 1) \leq \log_p(4d^2(\delta_1\delta_2)^2)$. Since $\delta_1\delta_2  = \alpha^2 - \beta^2a^2 \leq \alpha^2 = \frac{1}{4}(\Tr(r))^2$, we find that $w/e \leq \lceil w/e \rceil \leq \frac{1}{2}  \log_p(4d^2(\delta_1\delta_2)^2) + 1 \leq \log_p(\frac{d\cdot \Tr(r)^2}{2}) +1$. Since $w = \lceil -v/4k\rceil$, it follows that $-v \leq 4k w \leq 4k e \left[\log_p(\frac{d\cdot \Tr(r)^2}{2}) +1 \right]$.
\item {\bf High ramification.} Suppose that $e>  p-1$.  By Theorem \ref{thm: bounds on indices of endo rings under deformation and ramification} ,
\[ [\End(E): \End(\EE)] \geq p^{\frac{1}{4}(\lceil w/e \rceil - 2)}. \]
Similar computations yield $-v \leq 4k e \left[8\log_p(\frac{d\cdot \Tr(r)^2}{2}) +2 \right]$.
\end{enumerate}
\end{proof}

\subsection{Factorization of class invariants and denominators of Igusa class polynomials} 
We derive several consequences of Theorem \ref{theorem: main theorem}.

\begin{cor} \label{cor: denominators}
Let $K$ be a quartic primitive CM field, as in the beginning of \S~\ref{section: main theorem} and let $\gerh_i(x)$, $i=1, 2, 3$, be the class polynomial defined using the function $f_i/\Theta^{k(i)}$ as in \S \ref{section: Igusa class polynomials}, equation (\ref{equation for gerh polynomials}), where $k(i) = 6, 4, 4$ for $i=1, 2, 3$, respectively. In the notation of Theorem \ref{theorem: main theorem}, the coefficient of $x^{\deg(h_i) - a}$ in $h_i(x)$, which is a rational number, has valuation $\val_p$ greater or equal to 
\[ 
\begin{cases}  
-4a\cdot k(i)\left(\log_p\left(\frac{d \Tr(r)^2}{2}\right)+1\right) & e \leq p-1 \\
-4a\cdot  k(i)\left(8\log_p\left(\frac{d \Tr(r)^2}{2}\right)+2\right) & \text{else.}
\end{cases}
\]
 
\end{cor}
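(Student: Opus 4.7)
The plan is to apply Theorem~\ref{theorem: main theorem} to $f := \gerj_i$ at each CM point appearing in the product defining $\gerh_i(x)$, then assemble the pointwise bounds via Vieta's formulas, and finally descend from $\val_{\gerp_L}$ to $\val_p$. By construction (equation~(\ref{equation for gerh polynomials})) we have $\gerj_i = f_i/\Theta^{k(i)}$ where $f_i$ is a level one modular form of weight $10\,k(i)$ with integral Fourier coefficients of gcd $1$ (as remarked in \S\ref{section: Igusa class polynomials}), so the hypotheses of the Main Theorem are satisfied with $k = k(i)$.

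\textbf{The Vieta step.} Enumerate the CM points contributing to $\gerh_i$ as $\tau_1,\dots,\tau_n$, so that $\gerh_i(x)=\prod_{j=1}^n (x-\gerj_i(\tau_j))$. The coefficient $c_a$ of $x^{n-a}$ equals $(-1)^a$ times the $a$-th elementary symmetric polynomial in $\gerj_i(\tau_1),\dots,\gerj_i(\tau_n)$. In particular, for any non-archimedean valuation $\val$ of a field containing all the $\gerj_i(\tau_j)$,
$$\val(c_a)\ \geq\ a\cdot \min_{j}\ \val(\gerj_i(\tau_j)),$$
since $\val$ of a sum is at least the minimum $\val$ of its summands, and $\val$ of a product of $a$ terms is the sum of the $\val$'s.

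\textbf{Invoking the Main Theorem and descending to $\val_p$.} Let $L'$ be a number field containing $N$ and all the values $\gerj_i(\tau_j)$ (for instance the compositum over all CM types $\Phi$ of $K$ of the fields $NH_{K^\ast}$ of the Main Theorem). Fix a prime $\gerp_{L'}$ of $L'$ above $p$, set $\gerp_{N,1}:=\gerp_{L'}\cap N$, and let $e:=e(\gerp_{N,1}/p)$; crucially, since $N/\QQ$ is Galois, $e$ depends only on $p$ and $N$ (not on the CM type), so the Main Theorem applied at each $\tau_j$ yields the uniform bound
$$\val_{\gerp_{L'}}(\gerj_i(\tau_j))\ \geq\ -4\,k(i)\,e\cdot B,\qquad B=\begin{cases}\log_p\!\left(\tfrac{d\Tr(r)^2}{2}\right)+1 & e\leq p-1,\\ 8\log_p\!\left(\tfrac{d\Tr(r)^2}{2}\right)+2 & \text{otherwise.}\end{cases}$$
Combining with the Vieta inequality, $\val_{\gerp_{L'}}(c_a)\geq -4\,a\,k(i)\,e\,B$. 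Now $c_a\in\QQ$, so $\val_p(c_a)=\val_{\gerp_{L'}}(c_a)/e(\gerp_{L'}/p)$; and because $e(\gerp_{L'}/p)=e(\gerp_{L'}/\gerp_{N,1})\cdot e\geq e$, dividing a negative lower bound by the larger number $e(\gerp_{L'}/p)$ gives
$$\val_p(c_a)\ \geq\ \val_{\gerp_{L'}}(c_a)/e\ \geq\ -4\,a\,k(i)\,B,$$
which is exactly the asserted bound.

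\textbf{Where the work lies.} The entire content is in Theorem~\ref{theorem: main theorem}; the corollary is then a straightforward bookkeeping exercise combining Vieta with the ramification relation $e(\gerp_{L'}/p)\geq e$. The only point requiring any care is that different $\tau_j$ can correspond to different CM types, hence a priori to different reflex fields $K^\ast$; this is handled by passing to the compositum $L'$ and observing that $e$, defined through the normal closure $N$ of $K$, is insensitive to the CM type.
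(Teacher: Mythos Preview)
Your argument is exactly the unpacking the paper has in mind: its proof is the single line ``Straightforward from Theorem~\ref{theorem: main theorem}.''  Your Vieta step and the descent from $\val_{\gerp_{L'}}$ to $\val_p$ are the right bookkeeping.

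One wrinkle in your write-up deserves a sentence of justification. You assert $\val_{\gerp_{L'}}(\gerj_i(\tau_j)) \geq -4k(i)\,e\,B$, but the Main Theorem only gives this for $\val_{\gerp_{L_j}}$ with $L_j = N H_{K^\ast_j}$; passing to $L'$ multiplies by $e(\gerp_{L'}/\gerp_{L_j})$, which on a negative bound would a priori hurt you. The fix is to note that each $H_{K^\ast_j}/K^\ast_j$ is everywhere unramified (Hilbert class field) and $K^\ast_j\subset N$, so each $L_j/N$---and hence $L'/N$---is unramified at $p$; thus $e(\gerp_{L'}/\gerp_{L_j})=1$ and in fact $e(\gerp_{L'}/p)=e$ exactly. (Alternatively, sidestep the compositum entirely by normalizing to $\val_p$ from the start: $\val_p(\gerj_i(\tau_j))=\val_{\gerp_{L_j}}(\gerj_i(\tau_j))/e(\gerp_{L_j}/p)\geq -4k(i)B$ directly, then apply Vieta in $\val_p$.) With either of these one-line observations your proof is complete.
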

\begin{proof} Straightforward from Theorem \ref{theorem: main theorem}.
\end{proof}

\begin{rmk} We remark that this corollary is crucial in bounding the complexity of construction of CM curves of genus $2$, by the methods currently used. The Corollary is proven for the invariants that we find convenient; with little effort one can deduce easily such bound for the Igusa class polynomials appearing in equation \ref{class polynomials 2}, which are often used in the literature. Further, we could have equally proven the Corollary for class polynomials formed out of the Igusa coordinates $\gamma_i$ (see \S  \ref{section: absolute invariants}). In principle, this is ``the right thing to do", on the other hand, given Proposition~\ref{prop: failure of igusa invariants}, in practice it suffices to deal only with (some set of) the absolute Igusa invariants.
\end{rmk}
\begin{cor} \label{cor: class invariants}
Let $u(\Phi; \gera, \gerb)$ be the class invariant defined in \cite{DeShalitGoren}, associated to fractional ideals $\gera, \gerb$ of $K$. Let $\gerp_L$ be a prime of $L$, as in Theorem \ref{theorem: main theorem} and $\gerp_{H_{K^\ast}} = \gerp_L \cap H_{K^\ast}$. We note that $u(\Phi; \gera, \gerb) \in H_{K^\ast} \subseteq L$. We have
\[ \vert \val_{\gerp_{H_{K^\ast}}}(u(\Phi; \gera, \gerb)) \vert \leq \begin{cases}  
8 e^\ast\left(\log_p\left(\frac{d \Tr(r)^2}{2}\right)+1\right) & e \leq p-1 \\
8 e^\ast\left(8\log_p\left(\frac{d \Tr(r)^2}{2}\right)+2\right) & \text{else,}
\end{cases}
\]
where $e^\ast$ is the ramification index of $\gerp_{K^\ast} = \gerp_{K^\ast, 1}$ over $p$.
\end{cor}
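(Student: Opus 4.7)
The plan is to reduce the Corollary to Theorem~\ref{theorem: main theorem} by writing both $u(\Phi;\gera,\gerb)$ and its inverse as values at a CM point of a modular function of the required form $g/\Theta^{k}$, and then to translate the bound from $\val_{\gerp_L}$ to $\val_{\gerp_{H_{K^*}}}$ using class field theory.

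First I would unpack the definition of $u(\Phi;\gera,\gerb)$ from \cite{DeShalitGoren} to exhibit it as $u(\Phi;\gera,\gerb)=g_1(\tau)/\Theta(\tau)^2$ for a suitable CM point $\tau\in\gerH_2$ associated to the pair $(\gera,\gerb)$, where $g_1$ is a level-one Siegel modular form of weight $20$ with coprime integer Fourier coefficients. The key observation is that, although $\Theta$ itself has weight $10$, the definition of $u$ is built from the \emph{squares} of the ten even Riemann theta functions -- the same squares whose product defines $\Theta$ -- so that the natural way to turn $u$ into the CM value of a modular function forces $\Theta^2$, not $\Theta$, into the denominator. The same construction applied to the reciprocal ideal data gives an expression $1/u(\Phi;\gera,\gerb)=g_2(\tau)/\Theta(\tau)^2$ with $g_2$ of the same shape as $g_1$.

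Next, I would apply Theorem~\ref{theorem: main theorem} to both modular functions at the CM point $\tau$, with $k=2$. This yields simultaneously a lower bound on $\val_{\gerp_L}(u)$ and (via the inversion) an upper bound on $\val_{\gerp_L}(u)$. Combining them gives, in the tame case $e\le p-1$:
\begin{equation*}
|\val_{\gerp_L}(u(\Phi;\gera,\gerb))|\;\le\;8e\!\left(\log_p\!\left(\tfrac{d\,\Tr(r)^2}{2}\right)+1\right),
\end{equation*}
and the analogous bound, with coefficient $8e(8\log_p(\cdots)+2)$, in the wild case via the second line of the theorem.

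Finally, I would convert from $\gerp_L$ to $\gerp_{H_{K^*}}$ using ramification bookkeeping. By inspection of the tables in \S\ref{section: reduction of abelian surfaces with complex multiplication}, $K^*\subset N$ in every case, so $e^*\mid e$. Because $H_{K^*}/K^*$ is the Hilbert class field, it is everywhere unramified, and in particular $e(\gerp_{H_{K^*}}/p)=e^*$. Since $L=N\cdot H_{K^*}$ is obtained from $N$ by an extension unramified at primes above $p$, we have $e(\gerp_L/p)=e$, and hence $e(\gerp_L/\gerp_{H_{K^*}})=e/e^*$. Dividing the bound on $|\val_{\gerp_L}(u)|$ by this factor converts $e$ to $e^*$ and produces the stated inequality. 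The main obstacle is the first step: verifying from the construction in \cite{DeShalitGoren} that $u$ and $1/u$ can indeed be written as $g_i/\Theta^2$ with $g_i$ a level-one modular form of weight $20$ with coprime integer Fourier coefficients, for the correct choice of CM point~$\tau$; it is precisely the exponent $k=2$ emerging from this bookkeeping that yields the constant $8$ (rather than a weaker $4$) in the final bound.
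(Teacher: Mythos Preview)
Your overall strategy --- express $u$ and $1/u$ as CM values of modular functions whose denominator is a power of $\Theta$, apply Theorem~\ref{theorem: main theorem}, and then descend from $\gerp_L$ to $\gerp_{H_{K^\ast}}$ by ramification bookkeeping --- is the right one, and your ramification step agrees with the paper's. The gap is in the first step, which you correctly flag as the crux, but your proposed resolution does not work.

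The invariant $u(\Phi;\gera,\gerb)$ is a ratio of four values of $\Theta$ at \emph{four different} CM points,
\[
u(\Phi;\gera,\gerb)=\frac{\Theta(\Phi(\gera^{-1}\gerb^{-1}))\,\Theta(\Phi(\calO_K))}{\Theta(\Phi(\gera^{-1}))\,\Theta(\Phi(\gerb^{-1}))},
\]
not a combination of individual theta constants at a single point. Your description of $u$ as ``built from the squares of the ten even Riemann theta functions'' and the resulting claim that $u=g_1(\tau)/\Theta(\tau)^2$ with $g_1$ a \emph{level-one} form of weight $20$ is therefore unfounded. If one fixes $\tau=\Phi(\calO_K)$ and tries to absorb the other three CM points into the numerator, the Hecke translates $\Theta\vert_\gamma$ that arise have level $\Norm(\gera)$, $\Norm(\gerb)$, etc., so the resulting $g_1$ is genuinely of higher level; it is not a level-one holomorphic form, and Theorem~\ref{theorem: main theorem} as stated does not apply. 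The same objection defeats your expression for $1/u$.

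The paper proceeds differently: it first treats $u(\Phi,\gera)=\Theta(\Phi(\gera^{-1}))/\Theta(\Phi(\calO_K))$, written both as $(\Theta\vert_\gamma/\Theta)(\tau)$ and as $(\Theta/\Theta\vert_{\gamma^{-1}})(\tau')$ for suitable $\gamma\in\Symp(4,\QQ)$ with integral entries and $\det\gamma$ prime to $p$. These are modular functions of level $N$ (with $N$ the norm of the ideal, chosen prime to $p$), not level one; the paper explicitly invokes ``the result we have obtained in its proof by passing to level $N$'' rather than the theorem as literally stated. With $k=1$ this gives $|\val_{\gerp_{H_{K^\ast}}}(u(\Phi,\gera))|\le 4e^\ast(\cdots)$. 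For $u(\Phi;\gera,\gerb)$ one then writes it as a ratio of two such quantities evaluated at two CM points, and the bounds add to give the factor $8e^\ast$. So the constant $8$ arises as $4+4$ from two applications with $k=1$, not from a single application with $k=2$ as you suggest; the numerical coincidence is accidental.
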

\begin{proof} 
We refer to  \cite{DeShalitGoren} for the detailed definitions. We have
\[ u(\Phi, \gera) = \frac{\Theta(\Phi(\gera^{-1}))}{\Theta(\Phi(\calO_K))},\]
which may also be written as 
\[ u(\Phi, \gera) = \left(\frac{\Theta\vert_\gamma}{\Theta}\right)(\tau), \]
where $\tau$ is a period matrix for $\Phi(\calO_K)$ and, for $\gera^{-1}$ an integral ideal, $\gamma\in \Symp(4, \QQ)$ is a matrix with integral entries and 
determinant $\Norm(\gera)$ (cf. \cite{DeShalitGoren} p. 786 and \S 3.2). We remark that we may also write 
\[ u(\Phi, \gera) =  \left(\frac{\Theta}{\Theta\vert_{\gamma^{-1}}} \right)(\tau'), \]
where $\tau'$ is a period matrix corresponding to $\gera^{-1}$. 

Now fix a prime ideal $\gerP$ of $\overline\QQ$ above the rational prime $p$. Assume $\gera^{-1}$ is an integral ideal of norm $N\geq 3$, which is relatively prime to $\gerP$. We note that both $\frac{\Theta\vert_\gamma}{\Theta}$ and $\frac{\Theta}{\Theta\vert_{\gamma^{-1}}}$ are modular functions of level $N$, defined over $\QQ(\zeta_N)$, and $ u(\Phi, \gera)$ is obtained by evaluating them at a point with CM by $\calO_K$. We can therefore apply Theorem \ref{theorem: main theorem}, or, more precisely, the result we have obtained in its proof by passing to level $N$. We consider both $\left(\frac{\Theta\vert_\gamma}{\Theta} \right)(\tau)$ and $\left(\frac{\Theta}{\Theta\vert_{\gamma^{-1}}} \right)(\tau')$ to get from one a bound on the denominator of $u(\Phi, \gera)$ at $\gerP$ and, from the other, a bound on the numerator. The points $\tau, \tau'$ correspond to abelian varieties with CM by $\calO_K$ defined over the compositum $L'$ of $L$ and $\QQ(\zeta_N)$, which does not increase the ramification index $e$ of $p$ at $\gerp_{L} = \gerP \cap L$. We may then consider the valuation at $\gerp_{L'} = \gerP \cap L'$. We conclude that 
\[ \vert \val_{\gerp_{L'}}(u(\Phi, \gera))\vert \leq 
\begin{cases}  
4e\left(\log_p\left(\frac{d \Tr(r)^2}{2}\right)+1\right) & e \leq p-1 \\
4e\left(8\log_p\left(\frac{d \Tr(r)^2}{2}\right)+2\right) & \text{else.}
\end{cases}
\]
However, the algebraic number $u(\Phi, \gera)$ actually lies in $H_{K^\ast}$ and so we get
\[ \vert \val_{\gerp_{H_{K^\ast}}}(u(\Phi, \gera)) \vert \leq 
\begin{cases}  
4e^\ast\left(\log_p\left(\frac{d \Tr(r)^2}{2}\right)+1\right) & e \leq p-1 \\
4e^\ast\left(8\log_p\left(\frac{d \Tr(r)^2}{2}\right)+2\right) & \text{else,}
\end{cases}
\]
where $e^\ast = e(\gerp_{K^\ast}/p)$.

Let us now consider $u(\Phi; \gera, \gerb)$. The class invariant $u(\Phi; \gera, \gerb)$ depends only on the ideal class of $\gera$ and $\gerb$ in the class group of $K$. Having fixed $\gerP$, we may assume therefore that $\gera^{-1}, \gerb^{-1}$ are integral and of norm prime to $p$. We note the expressions:
\[ u(\Phi; \gera, \gerb) = \frac{u(\Phi, \gera\gerb)}{u(\Phi, \gera)u(\Phi, \gerb)} = \frac{\Theta(\Phi(\gera^{-1}\gerb^{-1})) \Theta(\Phi(\calO_K))}{\Theta(\Phi(\gera^{-1})) \Theta(\Phi(\gerb^{-1}))}.\]
Instead of using directly our bound above, we note that
for $\gera^{-1}, \gerb^{-1}$ integral ideals, we may write
\[  u(\Phi; \gera, \gerb) =  \left(\frac{\Theta\vert_\gamma}{\Theta}\right)(\tau') /  \left(\frac{\Theta\vert_\beta}{\Theta} \right)(\tau),
\]
where $\tau$ is a period matrix for $\Phi(\calO_K)$, $\tau'$ is a period matrix for $\Phi(\gera^{-1})$, $\beta, \gamma\in \Symp(4, \QQ)$ are matrices with integral entries and 
determinants prime to $p$. Thus, repeating the consideration above, we conclude the bound in the corollary.
\end{proof}

\

\

%%%%%%%%%%%%%%%%%%%%%%%%%%%%%%%%%%%%


\begin{thebibliography}{99}

\bibitem[BY]{BY}  Bruinier, J. H.; Yang, T.: CM-values of Hilbert modular functions. Invent. Math. 163 (2006), no. 2, 229--288. 

\bibitem[Brz]{Brzezkinski} Brzezinski, J.: On orders in quaternion algebras. Comm. Algebra 11 (1983), no. 5, 501--522.

\bibitem[Coh]{Cohen} Cohen, H.: Number theory. Vol. I. Tools and Diophantine equations. Graduate Texts in Mathematics, 239. Springer, New York, 2007.

\bibitem[CohI]{CohenI} Cohen, I. S.: On the structure and ideal theory of complete local rings. Trans. Amer. Math. Soc. 59, (1946). 54--106.

\bibitem[DSG]{DeShalitGoren} De Shalit, E.; Goren, E. Z.: On special values of theta functions of genus two. Ann. Inst. Fourier (Grenoble) 47 (1997), no. 3, 775--799. 

\bibitem[Dok]{Dok} Dokchitser, T.: Deformations of $p$-divisible groups
and $p$-descent on elliptic curves. Ph.D. Thesis. Utrecht 2000.

\bibitem[FK]{FreyKani} Frey, G.; Kani, E.: Curves of genus $2$ covering elliptic curves and an arithmetical application. Arithmetic algebraic geometry (Texel, 1989), 153--176, Progr. Math., 89, Birkh\"auser Boston, Boston, MA, 1991.

\bibitem[EL]{EL} Eisentraeger, K.; Lauter, K.:
A CRT algorithm for constructing genus 2 curves over finite fields, Arithmetic, Geometry and Coding Theory (AGCT 2005), S\'eminaires et Congr\'es 21 (2009), 161--176.

\bibitem[vdG]{vdG}  van der Geer, G.: On the geometry of a Siegel modular threefold. Math. Ann. 260 (1982), no. 3, 317--350.

\bibitem[GHKRW]{GHKRW} Gaudry P.; Houtmann T.; Kohel D.; Ritzenthaler C.; Weng, A.: The 2-adic
CM method for genus 2 curves with application to cryptography. Advances in
Cryptology, ASIACRYPT 2006, Springer-Verlag, LNCS 4284, 114–-129, 2006.

\bibitem[Gor]{GorenReduction} Goren, E. Z.: On certain reduction problems concerning abelian surfaces. Manuscripta Math. 94 (1997), no. 1, 33--43.

\bibitem[GL1]{GL} Goren, E. Z.; Lauter, K. E.: Class invariants for quartic CM fields. Ann. Inst. Fourier (Grenoble) 57 (2007), no. 2, 457--480. 

\bibitem[GL2]{GL2}  Goren, E. Z.; Lauter, K. E.:
 Evil primes and superspecial moduli. Int. Math. Res. Not. 2006, Art. ID 53864, 19 pp.

\bibitem[Grs]{Gross} Gross, B. H.:
On canonical and quasicanonical liftings. Invent. Math. 84 (1986),
no. 2, 321--326.

\bibitem[Gro]{Grothendieck}  Grothendieck, A.: Groupes de Barsotti-Tate et
cristaux de Dieudonn\'e. S\'eminaire de Math\'ematiques
Sup\'erieures, No. 45 (\'Et\'e, 1970). Les Presses de l'Universit\'e
de Montr\'eal, Montreal, Que., 1974.


\bibitem[HMNS]{HMNS} Hitt O'Connor, L.; McGuire, G.; Naehrig, M.; Streng, M.: CM construction of genus 2 curves with p-rank 1, Preprint 2008.
http://arxiv.org/abs/0811.3434v2

\bibitem[Igu1]{IgusaArithmeticModuli} Igusa, J.-I.: Arithmetic variety of moduli for genus two. Ann. of Math. (2) 72, 1960, 612--649. 

\bibitem[Igu2]{IgusaSiegel}  Igusa, J.-I.: On Siegel modular forms of genus two. Amer. J. Math. 84 (1962), 175--200.

\bibitem[Igu3]{IgusaRingOfModularFormsOverZ}  Igusa, J.-I.: On the ring of modular forms of degree two over $Z$. Amer. J. Math. 101 (1979), no. 1, 149--183.

\bibitem[Igu4]{IgusaProj}  Igusa, J.-I.: Modular forms and projective invariants. Amer. J. Math. 89 (1967), 817--855.


\bibitem[IKO]{IKO} Ibukiyama, T.; Katsura, T.; Oort, F.:
Supersingular curves of genus two and class numbers. 
Compositio Math. 57 (1986), no. 2, 127--152. 

\bibitem[Kat]{Katz} Katz, N.: Serre-Tate local moduli. Algebraic surfaces
(Orsay, 1976--78), pp. 138--202, Lecture Notes in Math., 868,
Springer, Berlin-New York, 1981.

\bibitem[Lang1]{LangCM} Lang, S.: Complex multiplication. Grundlehren der Mathematischen Wissenschaften 255. Springer-Verlag, New York, 1983.

\bibitem[Lang2]{LangEllipticFunctions} Lang, S.: Elliptic functions.
With an appendix by J. Tate. Second edition. Graduate Texts in Mathematics, 112. Springer-Verlag, New York, 1987.

\bibitem[Lau]{Lauter} Lauter, Kristin E.: Primes in the denominators of Igusa class polynomials. Preprint,
arXiv:math.NT/0301240, 2003.

\bibitem[Mes]{Mestre}  Mestre, J.-F.: Construction de courbes de genre $2$ \`a partir de leurs modules.  Effective methods in algebraic geometry (Castiglioncello, 1990), 313--334, Progr. Math., 94, Birkh\"auser Boston, Boston, MA, 1991.

\bibitem[PZ]{PZ} Pries, R.; Zhu, H. J.. The $p$-rank stratification of Artin-Schreier curves. Preprint. \newline
http://front.math.ucdavis.edu/0609.5657

\bibitem[Spa]{Spallek} Spallek, Anne-Monika.  Kurven vom Geschlecht 2 und ihre Anwendung in Public-Key-Kryptosystemen. Ph.D. Thesis. Universit\"at Gesamthochschule Essen, 1994.

\bibitem[Str]{Streng} Streng, M.:  Computing Igusa Class Polynomials.  Preprint,
http://arxiv.org/abs/0903.4766, 2009.

\bibitem[Sut]{Sutherland} Sutherland, A.: Computing Hilbert class polynomials with the Chinese Remainder Theorem. Preprint, http://arxiv.org/abs/0903.2785, 2009.

\bibitem[Val]{Vallieres} Vallieres, D.: \emph{Class Invariants}. McGill M.Sc. thesis, 2005. http://www.math.mcgill.ca/goren

\bibitem[Wam]{vanW} van Wamelen, P.: Examples of genus two CM curves defined over the rationals. Math. Comp. 68 (1999), no. 225, 307--320. 

\bibitem[Wen]{Weng} Weng, A.: Constructing hyperelliptic curves of genus 2 suitable for cryptography. Math. Comp. 72 (241):435--458, 2003.

\bibitem[Yaf]{Yafaev} Yafaev, A.: Private communication. July, 2009.

\bibitem[Yu]{Yu} Yu, C.-F.:  The isomorphism classes of abelian varieties of CM-type. J. Pure Appl. Algebra 187 (2004), no. 1-3, 305--319.

\bibitem[Zink]{Zink} Zink, Th.: The display of a formal~$p$-divisible group.
Cohomologies~$p$-adiques et applications arithm\'etiques~I.
Ast\'erisque No. 278 (2002), 127--248.


\end{thebibliography}
\end{document}